\def\shownotes{1}  %set 1 to show author notes
\newcommand{\authnote}[2]{{\scriptsize $\ll$\textsf{#1 notes: #2}$\gg$}}
\newcommand{\authnote}[2]{}
\newtheorem{theorem}{Theorem}
\newtheorem{lemma}{Lemma}
\newtheorem{proposition}{Proposition}
\newtheorem{definition}{Definition}
\newtheorem{assumption}{Assumption}
\newtheorem{claim}{Claim}
\newtheorem{formalism}{Formalism}
\newtheorem{conjecture}{Conjecture}
\renewcommand{\P}{\mathbb{P}}
\newcommand{\E}{\mathbb{E}}
\newcommand{\cN}{\mathcal{N}}
\newcommand{\tbeps}{\boldsymbol{\tilde\eps}}
\newcommand{\R}{\mathbb{R}}
\newcommand{\C}{\mathbb{C}}
\newcommand{\N}{\mathbb{N}}
\newcommand{\rvline}{\hspace*{-\arraycolsep}\vline\hspace*{-\arraycolsep}}
\newcommand{\<}{\langle}
\renewcommand{\>}{\rangle}
\newcommand{\tr}{\text{tr}}
\newcommand{\op}{{\rm op}}
\newcommand{\ones}{\bm{1}}
\newcommand{\what}{\widehat}
\def\sT{{\mathsf T}}
\def\bzero{{\boldsymbol 0}}
\def\simiid{\sim_{i.i.d.}}
\def\sign{{\rm sign}}
\def\de{{\rm d}}
\def\bd{{\boldsymbol d}}
\def\br{{\boldsymbol r}}
\def\bu{{\boldsymbol u}}
\def\bx{{\boldsymbol x}}
\def\by{{\boldsymbol y}}
\def\bz{{\boldsymbol z}}
\def\bC{{\boldsymbol C}}
\def\bG{{\boldsymbol G}}
\def\bQ{{\boldsymbol Q}}
\def\bT{{\boldsymbol T}}
\def\bX{{\boldsymbol X}}
\def\bY{{\boldsymbol Y}}
\def\bZ{{\boldsymbol Z}}
\def\beps{{\boldsymbol \eps}}
\def\bepsilon{{\boldsymbol \epsilon}}
\def\bmu{{\boldsymbol \mu}}
\def\bxi{{\boldsymbol \xi}}
\def\btheta{{\boldsymbol \theta}}
\def\bbeta{{\boldsymbol \beta}}
\def\bbeta{{\boldsymbol \beta}}
\def\tp{{\tilde p}}
\def\cB{{\mathcal B}}
\def\cD{{\mathcal D}}
\def\cE{{\mathcal E}}
\def\cH{{\mathcal H}}
\def\cL{{\mathcal L}}
\def\cM{{\mathcal M}}
\def\cN{{\mathcal N}}
\def\cP{{\mathcal P}}
\def\cS{{\mathcal S}}
\def\cX{{\mathcal X}}
\def\cY{{\mathcal Y}}
\newcommand{\eps}{\varepsilon} 
\newcommand{\la}{\langle}
\newcommand{\ra}{\rangle}
\newcommand{\txb}{\tilde\xb}
\def\id{{\mathbf I}}
\def\ones{{\mathsf 1}}
\def\bT{{\boldsymbol T}}
\def\cB{{\mathcal B}}
\def\MI{{\rm MI}}
\def\xb{{\mathbf x}}
\def\Yb{{\mathbf Y}}
\def\Xb{{\mathbf X}}
\def\Null{{\rm null}}
\def\ridge{{\rm ridge}}
\DeclareMathOperator*{\argmin}{arg\,min}
\DeclareMathOperator*{\argmax}{arg\,max}
\def\FDP{{\tt FDP}}
\def\TPP{{\tt TPP}}
\def\FD{{\tt FD}}
\def\TP{{\tt TP}}
\def\Rej{{\tt R}}
\def\TD{{\tt TD}}
\def\TPR{{\tt TPR}}
\def\mFDR{{\tt mFDR}}
\def\mTPR{{\tt mTPR}}
\def\BFDR{{\tt BFDR}}
\def\BTPR{{\tt BTPR}}
\def\FDR{{\tt FDR}}
\def\TPoP{{{\tt{TPoP}}}}
\def\CPoP{{{\tt {CPoP}}}}
\def\CRT{{{\tt{CRT}}}}
\def\dCRT{{{\tt{dCRT}}}}
\def\BH{{{\tt{BH}}}}
\def\eBH{{{\tt{eBH}}}}
\def\PoPCe{{{\tt{PoPCe}}}}
\def\PoEdCe{{{\tt{PoEdCe}}}}
\def\EPoEdCe{{{\tt{EPoEdCe}}}}
\def\ProbFam{{\mathcal P}}
\def\PostProb{{\mathcal P}}
\def\Xb{{\mathbf{X}}}
\newcommand{\indep}{\perp \!\!\! \perp}
\newcommand{\red}[1]{\textcolor{red}{#1}}
\newcommand{\blue}[1]{\textcolor{blue}{#1}}
\begin{document}

\title{Near-optimal multiple testing in Bayesian linear models with finite-sample FDR control}

\makeatletter
\def\blfootnote{\gdef\@thefnmark{}\@footnotetext}
\makeatother

\makeatletter
\def\@fnsymbol#1{\ensuremath{\ifcase#1\or \dagger\or %\dagger\or \ddagger\or
   \dagger\or \ddagger\or \|\or *\or **
   \or \ddagger\ddagger \else\@ctrerr\fi}}
\makeatother

\author{Taejoo Ahn\footnote{Department of Statistics, University of California, Berkeley. Email: taejoo\_ahn@berkeley.edu} \and Licong Lin\footnote{Department of Statistics, University of California, Berkeley. Email: liconglin@berkeley.edu} \and  Song Mei\thanks{Department of Statistics and EECS, University of California, Berkeley. Email: songmei@berkeley.edu}
}

        % <-this % stops a space
%\thanks{This paper was produced by the IEEE Publication Technology Group. They are in Piscataway, NJ.}% <-this % stops a space
%\thanks{Manuscript received April 19, 2021; revised August 16, 2021.}}

% The paper headers
%\markboth{Journal of \LaTeX\ Class Files,~Vol.~14, No.~8, August~2021}%
%{Shell \MakeLowercase{\textit{et al.}}: A Sample Article Using IEEEtran.cls for IEEE Journals}

%\IEEEpubid{0000--0000/00\$00.00~\copyright~2021 IEEE}
% Remember, if you use this you must call \IEEEpubidadjcol in the second
% column for its text to clear the IEEEpubid mark.

\maketitle

\begin{abstract}
In high dimensional variable selection problems, statisticians often seek to design multiple testing procedures that control the False Discovery Rate (\FDR), while concurrently identifying a greater number of relevant variables. Model-X methods, such as Knockoffs and conditional randomization tests, achieve the primary goal of finite-sample {\FDR} control, assuming a known distribution of covariates. However, whether these methods can also achieve the secondary goal of maximizing discoveries remains uncertain. In fact, designing procedures to discover more relevant variables with finite-sample {\FDR} control is a largely open question, even within the arguably simplest linear models.  

In this paper, we develop near-optimal multiple testing procedures for high dimensional Bayesian linear models with isotropic covariates. We introduce Model-X procedures that provably control the frequentist {\FDR} from finite samples, even when the model is misspecified, and conjecturally achieve near-optimal power when the data follow the Bayesian linear model. Our proposed procedure, {\PoEdCe}, incorporates three key ingredients: {\tt Po}sterior {\tt E}xpectation, {\tt d}istilled {\tt C}onditional randomization test (\dCRT), and the Benjamini-Hochberg procedure with {\tt e}-values (\eBH). The optimality conjecture of {\PoEdCe} is based on a heuristic calculation of its asymptotic true positive proportion (\TPP) and false discovery proportion (\FDP), which is supported by methods from statistical physics as well as extensive numerical simulations. Our result establishes the Bayesian linear model as a benchmark for comparing the power of various multiple testing procedures.

%Furthermore, when the prior is unknown, we show that an empirical Bayes variant of {\PoEdCe} still has finite-sample {\FDR} control and achieves near-optimal power. \sm{Modify} %\tj{font of dCRT, PoEdCe, eBH etcs are making lowercases to be ther upper case}   % \lc{want to change the font of FDR, POEdCe, etc. or not?}

% In this paper, we consider three notions of optimality in covariate selection problems, and derive the optimal Bayesian multiple testing procedures in linear models. We then propose a Model-X multiple testing procedure, PoEdCe, which provably controls the frequentist FDR from finite samples even under model misspecification, and conjecturally achieves near-optimal power when the data follows the Bayesian linear model with isotropic covariates and known prior. PoEdCe has three important ingredients: \textbf{Po}sterior \textbf{E}xpectation, \textbf{d}istilled \textbf{C}onditional randomization test (dCRT), and the Benjamini-Hochberg procedure with \textbf{e}-values (eBH). The optimality conjecture of PoEdCe is based on a heuristic calculation of its asymptotic true positive proportion (TPP) and false discovery proportion (FDP), which is supported by methods from statistical physics as well as extensive numerical simulations. Finally, when the prior is unknown, we numerically show that an empirical Bayes variant of PoEdCe still achieves near-optimal power with finite-sample FDR control. 

 \blfootnote{Code for our experiments is available at~\url{https://github.com/taejoo-ahn/FDR_Bayes_figures}.}
 
\end{abstract}

%\begin{IEEEkeywords}
%KEYWORDS
%\end{IEEEkeywords}

\section{Introduction}

High dimensional variable selection problems arise pervasively in a broad range of scientific domains, including genetics, healthcare, economics, and political science. These problems are often framed by statisticians within a multiple testing context, where the objectives are twofold: to control the false discovery rate (\FDR) and to identify as many relevant variables as possible. The first goal of {\FDR} control is usually much more crucial and expected to be achieved under much weaker model assumptions compared to the goal of variable discovery. To uncover more relevant variables, statisticians typically employ strong model assumptions that incorporate prior knowledge of the scientific domain. However, {\FDR} control, often having more significant consequences or risks in scientific applications, is desired even if the model and the prior are misspecified. The contrasting model assumptions needed for these two objectives pose a significant challenge in high-dimensional variable selection tasks.

Considerable previous work has focused on controlling the frequentist {\FDR} in variable selection problems. Among these, Model-X methods such as Knockoffs and conditional randomization tests \cite{barber2015controlling,candes2018panning,liu2022fast}, have proven successful in controlling {\FDR} from finite samples under mild model assumptions. These methods, more specifically, presume a known covariate distribution but allow any correlation between the response and the covariates, employing resampling techniques to convert any base statistics into test statistics that control finite-sample {\FDR}. Recent work has also studied the power of Model-X procedures when combined with LASSO-based statistics \cite{weinstein2017power,weinstein2020power,liu2019power,wang2022high}. However, how to leverage Model-X methods to establish procedures with \textit{optimal power} in specific models remains largely unanswered.

An alternative line of research has sought to derive optimal {\FDR} control procedures using the Bayesian approach. The central quantity in these Bayesian methods is the local false discovery rate (local fdr) \cite{efron2001empirical,efron2005local}, which is the posterior probability of the null hypothesis being applicable. Prior work \cite{muller2004optimal,muller2006fdr, sun2007oracle, xie2011optimal} has demonstrated that truncating local fdrs results in the most powerful procedure among those controlling Bayesian {\FDR}. However, the Bayesian {\FDR} control of these methods relies heavily on the assumption that the model and the prior are correctly specified. These procedures could potentially lose {\FDR} control whenever the prior is incorrect or the model is misspecified.

% Another line of research in Bayesian school address the problem of optimality. It is shown that under Bayesian multiple hypothesis problem setup, truncating the local fdr, which is the posterior probability of j'th hypothesis being null, gives the most powerful testing procedures under given FDR control (e.g., \cite{muller2004optimal,muller2006fdr, sun2007oracle, xie2011optimal}). However, such a procedure strongly relies on its model assumptions, and might lose control of FDR level when our prior information is wrong or the model is misspecified. 
 
A natural idea for circumventing the restrictions of both frequentist and Bayesian methods involves developing a procedure that integrates these two approaches. More specifically, one might use local fdrs as base statistics and wrap them using Model-X methodologies. Moreover, the Bayesian linear model, which is applicable to a broad range of scientific problems, presents itself as arguably the most suitable model for exploring such an idea. This prompts us to pose the following question:
\begin{quote}
 \textit{Is there a procedure that controls {\FDR} from finite samples and achieves near-optimal power under well-specified Bayesian linear models?  }
\end{quote}

% Given these frequentist and Bayesian literature, combining these two approaches seems to be a natural idea of simultaneously achieving frequentist {\FDR} control and optimizing for the Bayesian power. That is, using local fdrs as base statistics and wrapping them by Model-X procedures. The argurably most natural and simplest model for investigating such an idea is the Bayesian linear model, which is also widely applicable in various scientific problems. This motivates us to ask the following question:
% \begin{quote}
%  \textit{Is there a procedure that wraps local fdrs by a Model-X procedure, which controls {\FDR} from finite samples, and achieves near Bayes-optimal power under Bayesian linear models? }
% \end{quote}
% Prior works \cite{weinstein2017power,weinstein2020power} have shown that Knockoffs will lose power comparing to oracle procedures: wrapping the LASSO statistics (or LASSO coefficient-difference statistics) by the Knockoff procedure gives less asymptotic power. This rule out the vanilla Knockoff procedure 

In this paper, we investigate this question and suggest an affirmative answer through the introduction of two procedures, {\PoPCe} (pronounced as ``pop-see'') and {\PoEdCe} (pronounced as ``pod-see''). These procedures utilize local fdr and posterior expectation as the base statistics, apply the conditional randomization test (\CRT) \cite{candes2018panning} to compute the p-value of each hypothesis, and then implement the {\eBH} procedure \cite{wang2022false} on the obtained p-values. We show that {\PoPCe} and {\PoEdCe} always control finite-sample {\FDR} and conjecturally achieve near-optimal power under Bayesian linear models with a known prior distribution. The conjectured result is supported by methods from statistical physics as well as extensive numerical simulations. This result establishes the Bayesian linear model as a benchmark for comparing the power of various multiple testing procedures.

% In this paper, we give an affirmative answer to this question. We propose the {\PoPCe} and {\PoEdCe} procedures which asymptotically obtain maximal number of true discoveries when the model is correctly specified while controlling the finite-sample frequentist FDR even under model misspecification under Model-X settings. We focus on Bayesian linear model with i.i.d. Gaussian design. Our testing procedure uses local fdr and posterior mean as a base statistics, and apply distilled conditional randomization test to calculate p-values for each hypothesis, and then apply eBH procedure on obtained p-values. \sm{I will rewrite these three paragraphs. }
 
% In spite of an extensive literature on FDR-controlling methods for multiple testing problems, the question regarding the optimality of a FDR-controlling  method is not well understood. For instance, under the Model-X setting, the Knockoff method \cite{candes2018panning} is known to control the FDR in general. However, it is not known that whether the Knockoff filter with a specific test statistics can achieve the best performance, meaning that it reject most non-null hypotheses while controlling the FDR under a prespecified level.  In this work, we calculate the statistical limit of multiple testing methods and propose a new method which controls FDR under the Model-X setting and has  optimal power under Bayesian linear models.

\subsection{Model setup}\label{sec:model-setup}

%While we would like to discover as many true variables as possible, we do not want to include too many variables that does not contribute to $Y$. 

%Namely, we consider in total $d$ null hypotheses, where the $j$-th null hypothesis gives 
%\[
%H_{0j}: Y \perp X_j \vert \bX_{-j}, ~~~~~~~~ j = 1, 2, \ldots, d. 
%\]
%We denote $\mathcal{H}_0 \subseteq [d]$ to be the set of null hypotheses and $S=[d]/\mathcal{H}_0$ to be the set of non-null hypotheses. 

Suppose we observe independent and identically distributed (i.i.d.) samples $\{ (\bx_i, y_i) \}_{ i \in [n]} \subseteq \R^d \times \cY$ from the joint distribution $P \in \ProbFam(\R^d \times \cY)$ over variables $\bX = (X_1, X_2, \ldots, X_d)$ and $Y$. Throughout the paper, we represent the response vector as $\Yb = (y_1, \ldots, y_n)^\sT \in \R^n$ and the covariate matrix as $\Xb = (\bx_1, \ldots, \bx_n)^\sT = (\xb_1, \ldots, \xb_d) \in \R^{n \times d}$, where $\{ \bx_i\}_{i \in [n]} \subseteq \R^d$ and $\{ \xb_j \}_{j \in [d]} \subseteq \R^n$. In the case when the response $Y$ only depends on a small subset of the covariates, our goal is to identify this subset from samples. This problem can be cast into the multiple testing framework. Namely, a variable $X_j$ is said to be ``null'' if and only if under the joint distribution $P \in \ProbFam(\R^d \times \cY)$, $Y$ is independent of $X_j$ when conditioned on the other variables $\bX_{-j} = \{ X_1, \ldots, X_d \} \setminus \{ X_j \}$. The subset of null variables is denoted by $\cH_0 = \cH_0(P) \subseteq [d]$.  Conversely, a variable $X_j$ is called "nonnull" if $j \not\in \cH_0$, with the subset of nonnull variables denoted by $S = S(P) = [d] \setminus \cH_0(P)$.

A test statistics $\bT = ( T_j )_{j \in [d]}$ is a vector of possibly random functions $T_j: \Omega \to \{ 0, 1 \}$ where $\Omega = (\R^{d} \times \cY)^n$. We say that the $j$-th null hypothesis is rejected if and only if $T_j(\cD) = 1$. Given the dataset $\cD = (\Xb, \Yb)$ and the joint distribution $P \in \ProbFam(\R^d \times \cY)$, the number of discoveries $\Rej$, the number of false discoveries $\FD$, and the number of true discoveries $\TD$ of the test statistics $\bT$ are respectively defined as:
\begin{equation}\label{eqn:FD_TD}
\begin{aligned}
\Rej(\bT; \cD) \equiv&~ \#\{ j \in [d]: T_j(\cD) = 1 \}, \\
\FD(\bT; \cD, P) \equiv&~ \#\{ j \in \cH_0(P) : T_j(\cD) = 1 \}, \\
\TD(\bT; \cD, P) \equiv&~ \#\{ j \in S(P): T_j(\cD) = 1 \}.
\end{aligned}
\end{equation}
Furthermore, we define the \textit{false discovery proportion} (\FDP) and the \textit{true positve proportion} (\TPP) of the test statistics $\bT$ as
\begin{align}\label{eqn:FDP_TPP}
\FDP(\bT; \cD, P) \equiv \frac{ \FD(\bT; \cD, P) }{ \Rej(\bT; \cD) \vee 1},~~~~~~~~ \TPP(\bT; \cD, P) \equiv \frac{ \TD(\bT; \cD, P) }{ |S(P) |\vee 1 }. 
\end{align}
In the above expressions, we explicitly detail the dependence on  $\cD$ and $P$ for clarity. We will abbreviate as $\FDP(\bT) = \FDP(\bT; \cD, P)$ when the dataset $\cD$ and the distribution $P$ are unambiguous from the context (similar abbreviations apply to $\TPP$, $\Rej$, $\FD$, and $\TD$).  %We will write in short $\FDP(\bT) = \FDP(\bT; \cD, P)$ whenever the dataset $\cD$ and the distribution $P$ do not have ambiguity from the context (and we write in short similarly for $\TPP$, $\Rej$, $\FD$, and $\TD$). 

We next define the usual frequentist {\FDR} and {\TPR} \cite{benjamini1995controlling}, which is the expectation of {\FDP} and {\TPP} over the observed samples following the distribution $\{ (\bx_i, y_i) \}_{i \in [n]}\sim_{i.i.d.} P$, and over the randomness of the test statistics $\bT$, 
\begin{align}\label{eqn:frequentist-fdr}
\FDR(\bT, P) \equiv~  \E_{\cD \sim P, \bT} \Big[ \frac{\FD(\bT; \cD, P)}{\Rej(\bT; \cD) \vee 1} \Big], ~~~~~~ \TPR(\bT, P) \equiv~ \E_{\cD \sim P, \bT}\Big[ \frac{ \TD(\bT; \cD, P) }{ |S(P) |\vee 1 } \Big].
\end{align}
A test statistics $\bT$ is said to control the frequentist {\FDR} at level $\alpha$ over a collection of distributions $\cM$, if $\FDR(\bT; P) \le \alpha$ for any distribution $P \in \cM$.

Additionally, statisticians often consider Bayesian variants of {\FDR} and {\TPR}. Given a family of distributions $\{ P_{\bZ \vert \bbeta_0} \}_{\bbeta_0 \in \cB} \subseteq \ProbFam(\R^d \times \cY)$ parameterized by $\bbeta_0 \in \cB$, and given a prior $\bbeta_0 \sim \Pi \in \ProbFam(\cB)$, we can define the Bayesian false discovery rate (\BFDR) and the Bayesian true positive rate (\BTPR) \cite{muller2004optimal, muller2006fdr, sun2007oracle, storey2007optimal} of the test statistics $\bT$ as
\begin{equation}\label{eqn:def_BFDR}
\begin{aligned}
\BFDR(\bT, \Pi) \equiv&~ \E_{\bbeta_0 \sim \Pi} \Big\{ \E_{\cD \sim P_{\bZ \vert \bbeta_0}, \bT} \Big[ \frac{\FD(\bT; \cD, P_{\bZ \vert \bbeta_0})}{\Rej(\bT; \cD) \vee 1} \Big] \Big\},\\ \BTPR(\bT, \Pi) \equiv&~ \E_{\bbeta_0 \sim \Pi} \Big\{ \E_{\cD \sim P_{\bZ \vert \bbeta_0}, \bT} \Big[ \frac{\TD(\bT; \cD, P_{\bZ \vert \bbeta_0})}{\vert S(P_{\bZ \vert \bbeta_0}) \vert \vee 1} \Big] \Big\}.
\end{aligned}
\end{equation}
In the literature, people have also considered the \textit{marginal false discovery rate} (\mFDR) and \textit{marginal true positive rate} (\mTPR) defined as (with the convention that $0 / 0 = 0$)
\begin{equation}\label{eqn:def_mFDR_mTPR}
\begin{aligned}
\mFDR(\bT, \Pi) \equiv&~ \frac{\E_{\bbeta_0 \sim \Pi}\{\E_{\cD \sim P_{\bZ \vert \bbeta_0}, \bT}[\FD(\bT; \cD, P_{\bZ \vert \bbeta_0})]\}}{\E_{\bbeta_0 \sim \Pi}\{\E_{\cD \sim P_{\bZ \vert \bbeta_0}, \bT}[\Rej(\bT; \cD)]\}}, \\ 
\mTPR(\bT, \Pi) \equiv&~ \frac{\E_{\bbeta_0 \sim \Pi}\{\E_{\cD \sim P_{\bZ \vert \bbeta_0}, \bT}[\TP(\bT; \cD, P_{\bZ \vert \bbeta_0})]\}}{\E_{\bbeta_0 \sim \Pi}\{\vert S(P_{\bZ \vert \bbeta_0}) \vert \}}. 
\end{aligned}
\end{equation}
In high dimensional statistical models, due to the concentration phenomenon, different average notions of false discovery proportion (and true positive proportion) often approximate each other asymptotically. In simpler terms, in high dimensions, we often observe $\FDP \approx \FDR \approx \BFDR \approx \mFDR$ and $\TPP \approx \TPR \approx \BTPR \approx \mTPR$ \cite{genovese2002operating}. %Consequently, any of $\TPP, \TPR, \BTPR, \mTPR$ can serve as the measure of power. %In this paper, we will use {\mTPR} as the measure of power for theoretical simplicity. 

This paper will focus on the Bayesian linear model with isotropic Gaussian covariates. The same model was previously investigated in \cite{weinstein2017power,weinstein2020power} to analyze the efficacy of the Knockoff approach when applied to LASSO-based statistics. In the Bayesian linear model, the response $Y \in \R$ and the covariates $\bX \in \R^d$ form a linear relationship $Y = \< \bX, \bbeta_0\> + \eps$, where  the covariates $\bX$, the parameters $\bbeta_0 = (\beta_{0, j})_{j \in [d]} \in \R^d$, and the random noise $\eps \in \R$ are mutually independent. We assume that $\bX \sim \cN(0, (1/n) \id_d)$, and assume that the prior of $\bbeta_0$ has i.i.d. coordinates $(\beta_{0, j})_{j \in [d]} \sim_{i.i.d.} \Pi$, where $\Pi \in \ProbFam(\R)$ is a distribution supported on the real line. We further assume that $\Pi$ has a point mass at $0$, and the weight of this point mass gives $\pi_0 \in (0, 1)$, which is roughly the proportion of parameters $\{ \beta_{0,j} \}_{j \in [d]}$ that equal $0$. It should be noted that in linear models, under certain mild conditions (such as one cannot perfectly predict any $X_j$ from knowledge of $\bX{-j}$, \cite[Proposition 2.2]{candes2018panning}), the $j$-th null hypothesis $H_{0j}: Y \perp X_j \vert \bX_{- j},\bbeta_0$ can be demonstrated to be equivalent to the hypothesis $H_{0j}: \beta_{0, j} = 0$. A more comprehensive description of the Bayesian linear model is provided in Assumption \ref{ass:Bayesian_linear_model}. 
%Note that in this linear model, if we assume the covariance matrix of the marginal distribution of the covariates $P_\bX$ is nonsingular (which means that the feature variable $\bX$ is not autocorrelated \sm{Check}), the $j$-th null hypothesis $H_{0j}: Y \perp X_j \vert \bX_{- j},\bbeta$ can be shown to be equivalent to the hypothesis $H_{0j}: \beta_j = 0$. 

\subsection{Frequentist optimality in the Model-X setting and Bayesian optimality}\label{sec:freq}

%Here $\ProbFam$ will be the set of possible distributions of $(\bX,\by)$, and for any $\Pi\in\ProbFam(\cB)$, it contains $\{ P_{\bZ \vert \bbeta}: \bbeta \in {\rm supp}\{ \Pi \} \}$, the set of parametrized distribution $P_{\bZ \vert \bbeta}$.

We now focus on the Model-X setting \cite{candes2018panning, liu2022fast}, in which $P \vert_\bX$, the marginal distribution of the covariates of $P \in \ProbFam(\R^d \times \cY)$, is known to the statistician. For a distribution over covariates $P_\bX \in \ProbFam(\R^d)$, we let 
\begin{equation}\label{eqn:M-P-X}
\cM(P_\bX) \equiv \{ P \in \ProbFam(\R^d \times \cY): P \vert_\bX = P_\bX\}
\end{equation} 
represent the ensemble of probability distributions where the marginal distribution of the covariates aligns with $P_\bX$. 
Notice that our goal is to propose test statistics that control the frequentist {\FDR} from finite samples and ensure asymptotic optimality given the model is correctly specified. This gives rise to the following optimality notion. 
%In this setting, the test can take $P_x$ as the input, and hence we consider the set of test statistics
%\[
%\cT = \{ \bT : \Omega^n \times \ProbFam(\cX) \to \{ 0, 1\}^d \}. 
%\]
%For any $\bT \in \cT$ and $P_x \in \ProbFam(\cX)$, we denote $\bT \circ P_x$ as a mapping from $\Omega^n$ to $\{0, 1\}^d$ where for any dataset $\cD \in \Omega^n$, we have $\bT \circ P_x (\cD) = \bT(\cD, P_x)$. 

\begin{definition}[Optimal test with finite-sample {\FDR} control]\label{def:optimal-frequentist-fdr}
Let $\{ P_{\bZ \vert \bbeta_0} \}_{\bbeta_0 \in \cB} \subseteq \ProbFam(\R^d \times \cY)$ be a family of distributions and let $\Pi \subseteq \ProbFam(\cB)$ be a prior. Assume that $\{ P_{\bZ \vert \bbeta_0} \}_{\bbeta_0 \in \cB} \subseteq \cM(P_\bX)$ for a distribution $P_\bX \in \cP(\R^d)$. For some scalars $\alpha \in (0, 1)$ and $\eps > 0$, we say a test statistics $\bT_\star: \Omega \to \{ 0, 1\}^d$ is $(\alpha, \cM(P_\bX), \Pi, \eps)$-optimal with frequentist {\FDR} control, if its frequentist {\FDR} is controlled at level $\alpha$ over the collection of distributions $\cM(P_\bX)$, 
\begin{align}\label{eq:fdr_control_level_alpha}
\sup_{P \in \cM(P_\bX)} \FDR(\bT_\star, P) \le \alpha,
\end{align}
and its {\mTPR} with prior $\Pi$ is nearly maximized across all tests that control frequentist {\FDR} at level $\alpha$, 
\begin{align}
\mTPR(\bT_\star , \Pi) \ge \sup_{\bT: \Omega \to \{0, 1\}^d} \Big\{ \mTPR(\bT , \Pi): \sup_{P \in \cM(P_\bX)} \FDR(\bT , P) \le \alpha \Big\} - \eps.
\end{align}
\end{definition}

%\begin{definition}[Set $\BTPE$-{\FDR} optimality]
%Let $\mcP$ be a set of probability distribution on $\Omega = \cX \times \cY$. Let $\mcPi$ be a set of prior distribution on $\cB$, and let $\eps: \mcPi \to \R_{\ge 0}$. For a procedure $\bT_\star \in \cT$, we call it $(\alpha, \mcP, \mcPi, \eps)$-$\BTPE$-{\FDR} optimal if 
%\[
%\sup_{P \in \mcP} \FDR(\bT_\star \circ P_x, P) \le \alpha
%\]
%and for any $\Pi \in \mcPi$, we have
%\[
%\BTPE(\bT_\star \circ P_x, \Pi) \ge \max_{\bT \in \cT}\Big\{ \BTPE(\bT \circ P_x, \Pi): \sup_{P \in \mcP} \FDR(\bT \circ P_x, P) \le \alpha \Big\} - d \cdot \eps(\Pi).
%\]
%\end{definition}

%\subsection{Relation between FDR optimality and BFDR optimality.}

%The following two definitions are two different optimalities in the Bayesian sense. 
%\begin{definition}[{\mFDR} optimality]
%Assume Assumption \ref{bayesian_data_dist} and let $\Pi \subseteq \ProbFam(\cB)$ be the prior distribution in it. For a procedure $\bT_\star: \Omega \to \{ 0, 1\}^d$, we call it is $(\alpha, \Pi, \eps)$-{\mFDR} optimal if 
%\[
%\mFDR(\bT_\star, \Pi) \le \alpha
%\]
%and 
%\[
%\mTPR(\bT_\star, \Pi) \ge \max_{\bT: \Omega \to \{0, 1\}^d } \Big\{ \mTPR(\bT, \Pi): \mFDR(\bT, \Pi) \le \alpha \Big\} -  \eps.
%\]
%We say a procedure is $(\alpha, \Pi)$-{\mFDR} optimal if it is $(\alpha, \Pi, 0)$-{\mFDR} optimal. 
%\end{definition}
%

In Definition \ref{def:optimal-frequentist-fdr}, an alternative to employing {\mTPR} as the power measure is to use {\BTPR} instead. As we have mentioned, {\BTPR} and {\mTPR} will be asymptotically equal in many high dimensional models, so it will not make a big difference in choosing either as the power measure. For the purposes of this paper, we specifically choose {\mTPR} as the power measure due to its ability to facilitate a more straightforward theoretical framework. 

In addition to the aforementioned, there are two other crucial optimality concepts, namely, the optimal test with {\BFDR} control and the optimal test with {\mFDR} control. 
\begin{definition}[Optimal test with {\BFDR} control]\label{def:BFDR-optimality}
Let $\{ P_{\bZ \vert \bbeta_0} \}_{\bbeta_0 \in \cB} \subseteq \ProbFam(\R^d \times \cY)$ be a family of distributions and let $\Pi \subseteq \ProbFam(\cB)$ be a prior on $\bbeta_0$. Let $\alpha, \eps \in [0, 1]$ be two scalars. For a procedure $\bT_\star: \Omega \to \{ 0, 1\}^d$, we call it is $(\alpha, \Pi, \eps)$-optimal with {\BFDR} control, if its {\BFDR} with prior $\Pi$ is controlled at level $\alpha$, 
\begin{align}\label{eq:bfdr_control_level_alpha}
\BFDR(\bT_\star, \Pi) \le \alpha,
\end{align}
and its {\mTPR} is nearly maximized across all tests that control {\BFDR} at level $\alpha$, 
\begin{align}
\mTPR(\bT_\star, \Pi) \ge \max_{\bT: \Omega \to \{0, 1\}^d }\Big\{ \mTPR(\bT, \Pi): \BFDR(\bT, \Pi) \le \alpha \Big\} - \eps.
\end{align}
%We say a procedure is $(\alpha, \Pi)$-{\BFDR} optimal if it is $(\alpha, \Pi, 0)$-{\BFDR} optimal. 
\end{definition}

\begin{definition}[Optimal test with {\mFDR} control]\label{def:mfdr_optimality}
Let $\{ P_{\bZ \vert \bbeta_0} \}_{\bbeta_0 \in \cB} \subseteq \ProbFam(\R^d \times \cY)$ be a family of distributions and let $\Pi \subseteq \ProbFam(\cB)$ be a prior on $\bbeta_0$. Let $\alpha, \eps \in [0, 1]$ be two scalars. For a procedure $\bT_\star: \Omega \to \{ 0, 1\}^d$, we say that it is $(\alpha, \Pi, \eps)$-optimal with {\mFDR} control, if its {\mFDR} with prior $\Pi$ is controlled at level $\alpha$,
\begin{align}
\mFDR(\bT_\star, \Pi) \le \alpha,
\end{align}
and its {\mTPR} is nearly maximized across all tests that control {\mFDR} at level $\alpha$, 
\begin{align}
\mTPR(\bT_\star, \Pi) \ge \max_{\bT: \Omega \to \{0, 1\}^d }\Big\{ \mTPR(\bT, \Pi): \mFDR(\bT, \Pi) \le \alpha \Big\} - \eps.
\end{align}
%We say a procedure is $(\alpha, \Pi)$-{\BFDR} optimal if it is $(\alpha, \Pi, 0)$-{\BFDR} optimal. 
\end{definition}

% It is worth pointing out that Definition \ref{def:BFDR-optimality} and \ref{def:mfdr_optimality} are mostly parallel, and they both used {\mTPR} as the measure of power. As we have mentioned, again, {\BTPR} and {\mTPR} will be asymptotically equal in many high dimensional models, so it will not make a big difference choosing either as the measure of power. We didn't use {\BTPR} as the measure of power in Definition \ref{def:BFDR-optimality}, since our theoretical framework will be simpler using {\mTPR}. \lc{why don't we delete this paragraph? it's almost the same as the previous one.} \tj{agreed: looks too redundant}

%In this work, we propose a new testing procedure which is asymptotically {\FDR} optimal. The {\BFDR} optimality will help to illustrate our proposed procedure is {\FDR} optimal. 

An essential connection exists between the optimal test with frequentist {\FDR} control and the optimal test with {\BFDR} control, as per Definition \ref{def:optimal-frequentist-fdr} and \ref{def:BFDR-optimality}. The following lemma shows that the power of the optimal test with {\BFDR} control gives an upper bound for the power of the optimal test with frequentist {\FDR} control. The proof of this is directly evident from Definition \ref{def:optimal-frequentist-fdr} and \ref{def:BFDR-optimality}. 
\begin{lemma}\label{lem:bound}
Let $\{ P_{\bZ \vert \bbeta_0} \}_{\bbeta_0 \in \cB} \subseteq \ProbFam(\R^d \times \cY)$ be a family of distributions and let $\Pi \subseteq \ProbFam(\cB)$ be a prior. Assume that $\{ P_{\bZ \vert \bbeta_0} \}_{\bbeta_0 \in \cB} \subseteq \cM(P_\bX)$ for a distribution $P_\bX \in \cP(\R^d)$. Then the {\mTPR} value of the $(\alpha, \cM(P_\bX), \Pi, \eps)$-optimal test with frequentist {\FDR} control is always less than or equal to the {\mTPR} value of the $(\alpha, \Pi, \eps)$-optimal test with {\BFDR} control. 
\end{lemma}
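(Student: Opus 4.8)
The plan is to exploit the fact that the set of tests available to the $(\alpha, \cM(P_\bX), \Pi, \eps)$-optimal test with frequentist {\FDR} control is a subset of the tests available to the $(\alpha, \Pi, \eps)$-optimal test with {\BFDR} control. More precisely, I would argue that any test $\bT$ satisfying the frequentist constraint $\sup_{P \in \cM(P_\bX)} \FDR(\bT, P) \le \alpha$ automatically satisfies the Bayesian constraint $\BFDR(\bT, \Pi) \le \alpha$, so the feasible region in the supremum defining the optimal {\BFDR}-controlling test is larger, whence its {\mTPR} value is at least as large.

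The key step is the inclusion of feasible sets. Fix any $\bT: \Omega \to \{0,1\}^d$ with $\sup_{P \in \cM(P_\bX)} \FDR(\bT, P) \le \alpha$. By assumption $\{ P_{\bZ \vert \bbeta_0} \}_{\bbeta_0 \in \cB} \subseteq \cM(P_\bX)$, so in particular $\FDR(\bT, P_{\bZ \vert \bbeta_0}) \le \alpha$ for every $\bbeta_0 \in \cB$. Now recall from~\eqref{eqn:frequentist-fdr} that $\FDR(\bT, P_{\bZ \vert \bbeta_0}) = \E_{\cD \sim P_{\bZ \vert \bbeta_0}, \bT}[ \FD(\bT; \cD, P_{\bZ \vert \bbeta_0}) / (\Rej(\bT; \cD) \vee 1) ]$, which is exactly the inner expectation appearing in the definition~\eqref{eqn:def_BFDR} of $\BFDR(\bT, \Pi)$. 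Taking the expectation over $\bbeta_0 \sim \Pi$ of the pointwise bound $\FDR(\bT, P_{\bZ \vert \bbeta_0}) \le \alpha$ and using the tower property / Fubini (the integrand is nonnegative and bounded by $1$, so there is no integrability issue) gives
\begin{align*}
\BFDR(\bT, \Pi) = \E_{\bbeta_0 \sim \Pi}\big[ \FDR(\bT, P_{\bZ \vert \bbeta_0}) \big] \le \alpha.
\end{align*}
Hence $\bT$ is feasible for the {\BFDR}-constrained optimization.

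It then follows immediately that
\begin{align*}
\sup_{\bT} \big\{ \mTPR(\bT, \Pi) : \sup_{P \in \cM(P_\bX)} \FDR(\bT, P) \le \alpha \big\} \le \max_{\bT} \big\{ \mTPR(\bT, \Pi) : \BFDR(\bT, \Pi) \le \alpha \big\},
\end{align*}
since the supremum on the left is over a subset of the tests over which the maximum on the right is taken. By Definition~\ref{def:optimal-frequentist-fdr}, the left-hand side is within $\eps$ above the {\mTPR} of the $(\alpha, \cM(P_\bX), \Pi, \eps)$-optimal test; combining with the fact that by Definition~\ref{def:BFDR-optimality} the {\mTPR} of the $(\alpha, \Pi, \eps)$-optimal {\BFDR}-controlling test is within $\eps$ below the right-hand side, one gets the claimed comparison between the two {\mTPR} values (up to the same $\eps$ slack used in both definitions). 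Honestly, there is no real obstacle here — the statement is essentially bookkeeping about which constraint set is larger; the only mild care needed is to confirm that the inner expectation in~\eqref{eqn:def_BFDR} is literally $\FDR(\bT, P_{\bZ \vert \bbeta_0})$ and that $\{P_{\bZ \vert \bbeta_0}\} \subseteq \cM(P_\bX)$ lets us restrict the $\sup$ over $\cM(P_\bX)$ to the parametric family, which is exactly what is assumed.
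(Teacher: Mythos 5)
Your proposal is correct and follows essentially the same route as the paper: showing that $\sup_{P\in\cM(P_\bX)}\FDR(\bT,P)\le\alpha$ together with $\{P_{\bZ\vert\bbeta_0}\}\subseteq\cM(P_\bX)$ implies $\BFDR(\bT,\Pi)=\E_{\bbeta_0\sim\Pi}[\FDR(\bT,P_{\bZ\vert\bbeta_0})]\le\alpha$, so that the frequentist-feasible set is contained in the BFDR-feasible set and the {\mTPR} comparison follows. Your remark about tracking the $\eps$ slack in both definitions is, if anything, slightly more careful than the paper's one-line conclusion.
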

\begin{proof}
Note that $\BFDR(\bT, \Pi)= \E_{\bbeta_0 \sim \Pi} \FDR(\bT, P_{\bZ \vert \bbeta_0})\leq  \E_{\bbeta_0 \sim \Pi} [\alpha]=\alpha$ for any test statistics $\bT$ with frequentist {\FDR} below $\alpha$ (c.f. Eq. \eqref{eq:fdr_control_level_alpha}). That is, any test statistics  with  frequentist {\FDR}  below $\alpha$ (c.f. Eq. \eqref{eq:fdr_control_level_alpha}) also has {\BFDR} below $\alpha$ (c.f. Eq. \eqref{eq:bfdr_control_level_alpha}). Lemma~\ref{lem:bound} follows immediately. 
\end{proof}

Lemma \ref{lem:bound} suggests that, in order to prove that a test statistics $\bT$ is near-optimal with frequentist {\FDR} control, it is sufficient to demonstrate two things: (1) $\bT$ controls finite-sample {\FDR}; (2) the {\mTPR} of $\bT$ is close to the {\mTPR} of the optimal test with {\BFDR} control. In this paper, we will follow this approach to show that {\PoPCe} and {\PoEdCe} (which will be described in Section \ref{sec:FO}) are near-optimal with finite-sample frequentist {\FDR} control.

%We will first propose a testing procedure $\CPoP$ that is the optimal test with {\BFDR} control. Then we propose two other testing procedures  {\PoPCe} and {\PoEdCe}, which we claim to be asymptotically {\FDR} optimal. We first prove that those procedures control {\FDR}, and then prove that {\mTPR} of both procedures asymptotically reaches {\mTPR} of $\CPoP$ which is {\BFDR} optimal. Making use of Lemma 1, a testing procedure that controls {\FDR} and at the same time reaches the power of {\BFDR} optimal procedure is {\FDR} optimal, and thus we claim both  {\PoPCe} and {\PoEdCe} are asymptotically {\FDR} optimal.

%\subsection{Bayesian generalized linear model}
% 
%In order to get a powerful testing procedure, statisticians often assume a parametric model for the joint distribution $P_{\bX, Y}$. In this paper, we consider the generalized linear model: for some fixed $\bbeta$, we assume $P_{\bX, Y}(\bx, y) = P_{\bX, Y \vert \bbeta}(\bx, y \vert \bbeta) = P_\bX(\bx) P_{Y \vert \bX, \bbeta}(y \vert \bx, \bbeta )$, $P_{Y \vert \bX, \bbeta}(y \vert \bx, \bbeta) = P(y \vert \< \bbeta, \bx\>)$ where $P(y \vert t)$ is the conditional distribution of $y$ given score $t$. We assume a prior for $\bbeta$ in which $(\beta_j)_{j \in [d]} \sim_{i.i.d.} \Pi$ where $\Pi = (1 - \pi) \delta_0 + \pi \Pi_\star$. Here $\pi \in (0, 1)$ and $\Pi_\star \subseteq \ProbFam(\R)$ does not contain delta mass at $0$. In the generalized linear model, the $j$-th null hypothesis (with some technical assumptions on $P_\bX$ and $P(y \vert t)$) can be shown to be equal to the hypothesis $\beta_j = 0$. 

\subsection{Summary of contributions and paper outline}

\begin{itemize}
\setlength\itemsep{0.4em}
\item {\bf Optimal procedures with {\mFDR} and {\BFDR} control.} In Section \ref{sec:mfdr}, we introduce two procedures: {\TPoP} ({\tt T}runcating the {\tt Po}sterior {\tt P}robability) and {\CPoP} ({\tt C}umulative {\tt Po}sterior {\tt P}robability). Both methods are based on the truncation of local fdrs, the posterior probabilities of the hypotheses being null. We show that {\TPoP} is the optimal test with {\mFDR} control, while {\CPoP} is the optimal test with {\BFDR} control. It should be noted that neither {\TPoP} nor {\CPoP} are entirely new procedures \cite{muller2004optimal, sun2007oracle,xie2011optimal}, and their optimality proofs are primarily based on Bayesian decision theory.

\item {{\bf  Asymptotic power of the Bayesian optimal procedures.}} %{\TPoP} {\bf and} {\CPoP}{\bf.} } 
In Section \ref{sec:asymptotic_FDP_TPP}, we examine the Bayesian linear model with isotropic Gaussian covariates and obtain the analytical formula for the asymptotic {\TPP} and {\FDP} associated with the optimal procedures, {\CPoP} and {\TPoP}. The derivation of this formula is primarily heuristic, leveraging the replica method \cite{mezard2009information}, a useful tool originating from spin-glass theory within statistical physics. The validity of the derived analytical formula is subsequently demonstrated through numerical simulations. 

% \sm{1. We provide procedure that controls frequentist fdr in the Model-X setting, and achieves optimal power in the Bayesian setup... 2. The procedures are given by PoPCe, based on local fdr, CRT, dCRT, eBH. We also provide computational efficient variant PoEdCe. 3. The conjecture is based on heuristc calculations.  We perform numerical simulations to verify our conjecture. 
% }

% in Section \ref{sec:FDR_optimality}

\item{\bf Optimal procedures with frequentist {\FDR} control.}
In Section \ref{sec:FO}, we establish  procedures that control frequentist {\FDR} in the Model-X setting and are conjecturally near-optimal in the Bayesian linear model, suggesting an affirmative answer to our question. Specifically, we introduce the {\PoPCe} procedure ({\tt Po}sterior {\tt P}robability + {\tt C}onditional randomization test + {\eBH}) along with its computationally efficient variant, {\PoEdCe} ({\tt Po}sterior {\tt E}xpectation + {\tt d}istilled {\tt C}onditional randomization test + {\eBH}). We prove that both of these procedures control the frequentist FDR from finite samples under any data-generating model $P \in \cM(P_\bX)$, even in instances of model misspecification. When the data originates from a Bayesian linear model with isotropic covariates,  we propose the conjecture that {\PoPCe} and {\PoEdCe} achieve near-optimal power. We arrive at this conjecture through heuristic calculations, which we subsequently confirm via numerical simulations.

\item {\bf Bayesian linear model as a benchmark.} This result establishes the Bayesian linear model as a benchmark for power comparison amongst various multiple testing procedures. In other words, the efficacy of these multiple testing procedures can be evaluated in relation to the optimal power of {\PoEdCe} within the Bayesian linear model. 

\end{itemize}

\paragraph{Practical implications} We would like to emphasize that while the {\PoPCe} and {\PoEdCe} procedures serve as theoretical tools for the power analysis of {\FDR}-controlling procedures, we do not recommend their direct application in practical scenarios. Indeed, these procedures could possibly be powerless under model misspecification in real-world datasets. An interesting open question is thus whether one can enhance the power under model misspecification while maintaining finite-sample validity and Bayes optimality.

\subsection{Notations and conventions}

%We typically denote scalar-valued variables by standard font, e.g., $x,y,s,u$, and vector or matrix-valued variables by bold font, e.g., $\Xb,\Yb,\bbeta,\xb$.

Through the paper, for an integer $n$, we denote $[n] = \{ 1, 2, \ldots, n\}$. We denote the samples by $\{ (\bx_i, y_i) \}_{ i \in [n]} \subseteq \cX \times \cY$, the response vector by $\Yb = (y_1, \ldots, y_n)^\sT \in \R^n$ and the covariate matrix by $\Xb \in\R^{n\times d}$. We also denote the rows of $\Xb$ by $\{ \bx_i\}_{i\in[n]}$ and the columns by $\{\xb_j\}_{j\in[d]}$. We use $\bbeta_{-j} \in \R^{d-1}$ to denote the sub-vector of $\bbeta \in \R^d$ with the $j$-th coordinate removed, use $\bx_{i, -j} \in \R^{d-1}$ to denote the sub-vector of $\bx_i \in \R^d$ with the $j$-th coordinate removed, and use $\Xb_{-j} \in \R^{n \times (d-1)}$ to denote the sub-matrix of $\Xb$ with the $j$-th column removed.

For a measurable space $S$, we let $\ProbFam(S)$ denote the set of all Borel probability measures on the space. For a distribution $P\in\ProbFam{(\cX\times\cY)}$, we use $(\bX,Y) \in \R^d \times \R$ to denote the random variables that follow the distribution $P$, and use $\bX_{-j} \in \R^{d-1}$ to denote sub-vector of $\bX$ with the $j$-th coordinate removed. We use $\cL(\bX)$ to denote a distribution over the random vector $\bX$, and use $\cL(X_j | \bX_{-j} = \bx_{-j})$ to denote the conditional distribution of $X_j$ given $\bX_{-j} = \bx_{-j}$ under the law $\cL(\bX)$. 

In this paper, a mathematical statement is termed as a \textit{formalism} if it can be derived heuristically and confirmed numerically. In the appendix, we present several \textit{formalisms} which support the intuitions behind our conjectures.

% \sm{Add notations. }

% $\Xb$ and $\bX$. $\cP$. 

\section{Other related works}

% There is a large body of related work, as multiple testing is one of the most exciting topics in statistical inference. Here we only  review the most closely related work due to space limitations.

%They considered a mixture Bayesian model with a latent Bernoulli parameter, and define the local fdr as the posterior probability such that the latent parameter equals zero. 

Multiple testing is a central topic of statistical inference and has inspired numerous studies. However, due to space constraints, we will focus on the most relevant works. 

The concept of the false discovery rate (\FDR) was introduced in the frequentist context by \cite{benjamini1995controlling}, who also proposed a procedure (the Benjamini-Hochberg procedure, hereafter {\BH}) to control the {\FDR} given independent p-values. The original {\BH} procedure is conservative by a factor of $\pi_0$, where $\pi_0$ is the proportion of null hypotheses. To address this, the same authors \cite{benjamini2000adaptive} recommended estimating $\pi_0$ using large p-values. Later, \cite{benjamini2001control} demonstrated that the {\BH} procedure also controls the {\FDR} when the p-values satisfy the PRDS condition, which is less stringent than independence. However, the {\BH} procedure fails to control the {\FDR} with arbitrarily correlated p-values unless adjusted by a log factor, leading to a loss of power. As a solution, \cite{wang2022false} introduced the {\eBH} procedure (Benjamini-Hochberg with e-values), which controls the {\FDR} even in the presence of arbitrary correlations among e-values \cite{vovk2021values,shafer2019language}, a more manageable mathematical tool than p-values.

There is an alternate line of research that studies optimal procedures concerning Bayes FDR control. \cite{efron2001empirical, efron2005local} introduced the local fdr framework, which applies the empirical Bayes approach to multiple testing problems. Here, the local fdr represents the posterior probability that the null hypothesis holds. \cite{sun2007oracle, xie2011optimal} demonstrated that truncating the local fdr is the optimal procedure for controlling the marginal {\FDR} (\mFDR) within a Bayes setting, and they suggested truncating the cumulative local fdr for adaptivity. \cite{muller2004optimal,muller2006fdr,storey2007optimal} proved that truncating the local fdr in certain adaptive manners is optimal for specific Bayesian criteria. Conversely, other studies, such as \cite{abramovich2006adapting,ma2020global,zhang2020estimation}, derived {\FDR} control procedures in the super-sparse regime. These works derived the ``optimal'' procedures in the regime such that the optimal power can asymptotically approach one, a different context than what we consider in this paper.

Several {\FDR}-controlling methods based on the Knockoff filter and its variants have been recently introduced. \cite{barber2015controlling} presented the fixed design Knockoff procedure, which controls the finite-sample {\FDR} in linear models. However, this procedure is applicable only when the sample size exceeds the dimension. \cite{candes2018panning} introduced the Model-X Knockoff procedure, demonstrating remarkable flexibility by controlling the finite-sample FDR in any probabilistic model, as long as the distribution of covariates $\bX$ is known. %\cite{gimenez2019improving} considers the multiple-Knockoff procedure and shows that multiple-Knockoff has improved stability and is sometimes more powerful. 
Inspired by the Knockoff procedure, \cite{xing2021controlling, ke2020power, dai2022false, dai2020scale} proposed to control {\FDR} in an asymptotic sense using mirror statistics. They reveal that these methods attain greater power when features are highly correlated and when the parameter vector is less sparse. Furthermore,~\cite{spector2022powerful} enhanced the power of Model-X Knockoff procedures by creating knockoffs that minimize the reconstructability of the features.

Under the assumption of a linear model and high-dimensional proportional asymptotics, a series of works \cite{su2017false,wang2020which,weinstein2017power,weinstein2020power,wang2020complete,liu2019power,hu2019asymptotics,bu2021characterizing,wang2022high} derive the precise limit of the {\FDP} and {\TPP} for various variable selection methods, such as LASSO, $\ell_p$-ridge regression, SLOPE, and their respective Knockoff variations. Specifically, \cite{weinstein2017power} computes the asymptotic power of the Knockoff procedure for the LASSO statistics, while \cite{weinstein2020power} extends this result to the Knockoff procedure for the truncated LASSO coefficient statistics. Moreover, \cite{hu2019asymptotics} investigates the trade-off curve and optimal regularization for the SLOPE procedure. The precise calculations in these studies are primarily built upon recent advancements in the high-dimensional asymptotics of $M$-estimators, as demonstrated in works such as \cite{donoho2009message,rangan2011generalized, bayati2011dynamics, karoui2013asymptotic, donoho2016high, berthier2020state}.

The conditional randomization test (\CRT) \cite{candes2018panning} is a Model-X procedure closely associated with the Knockoff filter. {\CRT} generates a valid p-value by calculating the rank of base statistics among its resampled variants. \cite{candes2018panning,wang2022high} show that {\CRT} outperforms Model-X Knockoff in terms of power within certain statistical models, although {\CRT} comes with a higher computational load. Variants of {\CRT} include the conditional permutation test \cite{berrett2020conditional}, the holdout randomization test \cite{tansey2022holdout}, and the distilled conditional randomization test (\dCRT) \cite{liu2022fast}. Among these, {\dCRT} is especially notable for its significant reduction in the computational cost of {\CRT}, making it a key point of interest in this paper.

From a technical viewpoint, our main results and conjectures exploit the asymptotics of Bayes estimators of high dimensional models, as outlined in references such as  \cite{barbier2019optimal, barbier2019adaptive, barbier2016mutual, deshpande2015asymptotic, lelarge2019fundamental,barbier2021performance}. Furthermore, we borrow heuristic tools from statistical physics literature, including the replica method and the interpolation method \cite{mezard2009information, talagrand2010mean}. We should note that it is unclear whether local fdrs used in our procedures are efficiently computable: further exploration of these computational aspects can be found in works \cite{zdeborova2016statistical, barbier2019optimal, celentano2020estimation}.

\section{Statistical limits of Bayesian procedures}\label{sec:BO}

%\sm{Propagate the notations of the introduction. Remind what is $\Omega$. Don't use $\Null$, instead using $\cH_0(P)$. and ... }

%\sm{Add links to equations and give references: principle, if something is not new, or related to something in the literature, should add reference. }

%\sm{Need to have sentences description the purpose of the section/definition/proposition}

% We first study optimal procedures with {\mFDR} control and with {\BFDR} control. These procedures will serve as the base procedures for procedures frequentist {\FDR} control, which will be studied in Section \ref{sec:FO}. 

We begin by deriving the limiting statistical power of FDR controlling procedures within a Bayesian framework. We show that truncating the local fdr ({\TPoP}) is the optimal procedure with {\mFDR} control, and truncating the cumulative local fdr ({\CPoP}) is the optimal procedure with {\BFDR} control (Section \ref{sec:mfdr}). We then consider the Bayesian linear model with isotropic covariates, wherein we derive the asymptotic {\FDP} and {\TPP} for both {\TPoP} and {\CPoP} (Section \ref{sec:asymptotic_FDP_TPP}). Numerical simulations are provided for comparing {\TPoP} and {\CPoP} with the thresholding LASSO procedure (Section \ref{sec:simulation_Bayes}). The statistical limits of {\CPoP} and {\TPoP} will be used to estabilish the frequentist optimality of {\PoPCe} and {\PoEdCe}, to be introduced in Section \ref{sec:FO}.

%{\TPoP} and {\CPoP} procedures will serve as the base procedures for procedures frequentist {\FDR} control, which will be studied in Section \ref{sec:FO}. 

%In this section, we will introduce a procedure which truncate the local fdr which are optimal {\BFDR} controlling procedure and analyze its power asymptotically. Besides, we define {\mFDR} optimality and suggest another testing procedure which also truncates the local fdr and attains such optimality. Recall the Bayes model where we have family of distributions $\{ P_{\bZ \vert \bbeta} \}_{\bbeta \in \cB} \subseteq \ProbFam(\R^d \times \cY)$ parameterized by $\bbeta \in \cB$, and given a prior $\bbeta \sim \Pi \in \ProbFam(\cB)$. We further assume $\Pi$ has a point mass at $0$, and consider the case where the $j$-th null hypothesis is equivalent with $H_{0j}:\beta_j=0$. Note that the Bayesian linear model is an example of a model that meets our assumption under some mild conditions (see Assumption \ref{ass:Bayesian_linear_model} for details). 

% \subsection{The optimal Bayesian procedures}

\subsection{The optimal Bayesian procedures} \label{sec:mfdr}
%\sm{Define what is mFDR optimality}\\
%Here we first introduce a new notion of optimality; optimal test with {\mFDR} control. \sm{Explain why we }

The local false discovery rate (local fdr) \cite{efron2001empirical, efron2005local} is a widely-used tool in multiple hypothesis testing. Assuming a Bayesian model, the local fdr calculates the posterior probability of a hypothesis being null. Following the setup of Section \ref{sec:freq}, we denote $P_j(\cD)$ to be the $j$-th local fdr, under the Bayesian model $\cD \sim P_{\bZ \vert \bbeta_0}$ with prior $\bbeta_0 \sim \Pi$, 
\begin{align}\label{eqn:def-local-fdr}
P_j(\cD) = \P( j \in \cH_0(P) \vert \cD) = \P(\beta_{0, j} = 0\vert \cD).
\end{align}
Intuitively, a larger local fdr suggests a higher likelihood of the corresponding hypothesis being null under the Bayesian model. 

\paragraph{The {\TPoP} procedure for $\mFDR$ control} The {\TPoP} procedure $\bT_P(\cD;t)$, which represents {\tt T}runcating the {\tt Po}sterior {\tt P}robability, truncates the local fdr at level $t$, i.e., rejecting the hypotheses that are unlikely to be null, 
\begin{align}\label{eqn:def_T_P}
T_{P, j}(\cD;t) = \ones\{P_j(\cD) < t \}, ~~~~~ \bT_P = (T_{P, 1}, \ldots, T_{P, d}). 
\end{align}
Prior studies have demonstrated that {\TPoP} gives the optimal {\mTPR} with {\mFDR} control (c.f. Definition \ref{def:mfdr_optimality}) in specific statistical models \cite{sun2007oracle, xie2011optimal}. Extending these results, we next present a general regularity assumption under which we can show the optimality of {\TPoP}. % below shows that the {\TPoP} procedure with a properly chosen threshold is the optimal test with {\mFDR} control, under the following continuity assumption on the model distribution. 
\begin{assumption}[Regularity]\label{ass:marginal_optimality}
Under the Bayesian model $\cD \sim P_{\bZ \vert \bbeta_0}$ with prior $\bbeta_0 \sim \Pi$, the conditional densities $p(\cD|\beta_{0, j}=0)$ and $p(\cD|\beta_{0, j}\ne0)$ exist. Furthermore, $\P(P_j(\cD)<t|\beta_{0, j}=0)$ and $\P(P_j(\cD)<t|\beta_{0, j}\ne0)$ are continuous in $t$ for each $j\in[d]$.
\end{assumption}

We next show the optimality of {\TPoP} under the regularity assumption (proof in Appendix \ref{sec:Bayes_optimality_proof}). 
\begin{proposition}[Optimality of {\TPoP}]\label{prop:marginal_optimality}
Let $A = (\inf_t\mFDR(\bT_P( \cdot;t), \Pi), \sup_t\mFDR(\bT_P(\cdot;t), \Pi) )$ and let Assumption \ref{ass:marginal_optimality} hold. Then for any $\alpha \in A$, there exists $t = t(\alpha) \in (0, 1)$ such that 
\begin{align}\label{eqn:mFDR-alpha}
\mFDR(\bT_P( \cdot;t(\alpha)), \Pi) = \alpha. 
\end{align}
Moreover, for any test statistics $\bT: \Omega \to \{0, 1\}^d$ with $\mFDR(\bT, \Pi) \le \alpha$, we have 
\begin{align}\label{eqn:mTPR-optimal}
 \mTPR(\bT_P( \cdot;t(\alpha) ), \Pi) \ge  \mTPR(\bT, \Pi) .
\end{align}
That is, $\bT_P(\cdot;t(\alpha) )$ is an $(\alpha,\Pi, 0)$-optimal procedure with {\mFDR} control (c.f. Definition \ref{def:mfdr_optimality}). %We remark that here $\bT_P$ implicitly depends on $\Pi$. 
\end{proposition}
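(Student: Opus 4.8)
The plan is to run a Neyman--Pearson / Lagrangian argument after reducing the problem to a pointwise optimization over a single ``soft'' rejection rule per coordinate. Write $\pi_{0,j}:=\P(\beta_{0,j}=0)$ and, for a possibly randomized test $\bT$, let $\tau_j(\cD):=\E[T_j\mid\cD]\in[0,1]$ be its conditional rejection probability, where the external randomization of $\bT$ is independent of $(\cD,\bbeta_0)$ and $T_j$ depends on $\bbeta_0$ only through $\cD$. By the tower property, $\E[\FD(\bT)]=\sum_j\E_\cD[\tau_j(\cD)P_j(\cD)]$, $\E[\Rej(\bT)]=\sum_j\E_\cD[\tau_j(\cD)]$, $\E[\TP(\bT)]=\sum_j\E_\cD[\tau_j(\cD)(1-P_j(\cD))]$, and $\E_{\bbeta_0}|S|=\sum_j(1-\pi_{0,j})=:D$ is a fixed constant. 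Since $0\le\E[\FD]\le\E[\Rej]$, the condition $\mFDR(\bT,\Pi)\le\alpha$ is equivalent (the convention $0/0=0$ covers $\E[\Rej]=0$) to the linear inequality $C(\bT):=\sum_j\E_\cD[\tau_j(\cD)(P_j(\cD)-\alpha)]\le0$, and maximizing $\mTPR(\bT,\Pi)$ is the same as maximizing $U(\bT):=\sum_j\E_\cD[\tau_j(\cD)(1-P_j(\cD))]$.

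First I would establish \eqref{eqn:mFDR-alpha}. For the threshold rule $\bT_P(\cdot;t)$ we have $\tau_j(\cD)=\ones\{P_j(\cD)<t\}$, so, writing $a_j(t):=\P(P_j(\cD)<t\mid\beta_{0,j}=0)$ and $b_j(t):=\P(P_j(\cD)<t\mid\beta_{0,j}\ne0)$, one gets $\mFDR(\bT_P(\cdot;t),\Pi)=\frac{\sum_j\pi_{0,j}a_j(t)}{\sum_j(\pi_{0,j}a_j(t)+(1-\pi_{0,j})b_j(t))}$, which is continuous in $t$ wherever the denominator is positive by the continuity of $a_j,b_j$ granted in Assumption \ref{ass:marginal_optimality}. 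The intermediate value theorem, together with the fact that $\bT_P(\cdot;t)$ rejects every hypothesis (hence has $t$-constant $\mFDR$) for $t\ge1$, then yields for every $\alpha$ in the open interval $A$ a value $t(\alpha)\in(0,1)$ with $\mFDR(\bT_P(\cdot;t(\alpha)),\Pi)=\alpha$. I would also record the elementary inequality $\mFDR(\bT_P(\cdot;t),\Pi)<t$, valid whenever $\E[\Rej(\bT_P(\cdot;t))]>0$, because $P_j(\cD)<t$ on the event $\{P_j(\cD)<t\}$; this forces $\alpha<t(\alpha)$, so $t(\alpha)\in(\alpha,1)$.

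Next comes the optimality \eqref{eqn:mTPR-optimal}. Set $\lambda:=(1-t(\alpha))/(t(\alpha)-\alpha)$, which is $\ge0$ since $\alpha<t(\alpha)<1$, and let $g(p):=(1-p)-\lambda(p-\alpha)=1+\lambda\alpha-(1+\lambda)p$; the point of this calibration is that $g(p)>0\iff p<\tfrac{1+\lambda\alpha}{1+\lambda}=t(\alpha)$. For any test $\bT$, $U(\bT)-\lambda C(\bT)=\sum_j\E_\cD[\tau_j(\cD)\,g(P_j(\cD))]$, and this is maximized over all $\tau_j(\cD)\in[0,1]$ by taking $\tau_j(\cD)=\ones\{P_j(\cD)<t(\alpha)\}$ (on the tie set $\{P_j(\cD)=t(\alpha)\}$ the integrand vanishes, so any value is optimal there); that is, $\bT_P(\cdot;t(\alpha))$ maximizes $U-\lambda C$. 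Combining this with $C(\bT_P(\cdot;t(\alpha)))=0$ (immediate from $\mFDR(\bT_P(\cdot;t(\alpha)),\Pi)=\alpha$) and with $\lambda\ge0$, $C(\bT)\le0$ for any feasible $\bT$, weak duality gives $U(\bT)\le U(\bT)-\lambda C(\bT)\le U(\bT_P(\cdot;t(\alpha)))-\lambda C(\bT_P(\cdot;t(\alpha)))=U(\bT_P(\cdot;t(\alpha)))$; dividing through by the constant $D$ yields $\mTPR(\bT,\Pi)\le\mTPR(\bT_P(\cdot;t(\alpha)),\Pi)$, which is \eqref{eqn:mTPR-optimal}.

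I expect the conceptual skeleton (duality with a hand-calibrated multiplier) to be routine; the parts that need genuine care are the measure-theoretic passage to $\tau_j(\cD)$ and, above all, the continuity of $t\mapsto\mFDR(\bT_P(\cdot;t),\Pi)$, which is precisely the content of Assumption \ref{ass:marginal_optimality}, together with the boundary bookkeeping guaranteeing $\alpha<t(\alpha)<1$. The bound $\mFDR(\bT_P(\cdot;t),\Pi)<t$ is the one place where thresholding the \emph{posterior null probability}, rather than an arbitrary statistic, is genuinely used: it is what makes the Lagrange multiplier nonnegative and hence the whole argument go through.
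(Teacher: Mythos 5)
Your proposal is correct and follows essentially the same route as the paper: since $\E[\Rej]=\E[\TP]+\E[\FD]$, your Lagrangian $U-\lambda C$ with $\lambda=(1-t(\alpha))/(t(\alpha)-\alpha)$ equals $\frac{1-\alpha}{t(\alpha)-\alpha}\bigl(t(\alpha)\,\E[\TP]-(1-t(\alpha))\,\E[\FD]\bigr)$, which is exactly the weighted objective the paper shows is pointwise maximized by thresholding $P_j(\cD)$ at $t(\alpha)$, and your weak-duality chain is the paper's inequality string in rescaled form. The only cosmetic differences are that you obtain $t(\alpha)>\alpha$ directly from the strict bound $\mFDR(\bT_P(\cdot;t),\Pi)<t$ (the paper uses a short contradiction argument), and you pass to posterior probabilities via the tower property rather than through the conditional densities in Assumption \ref{ass:marginal_optimality}.
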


%\sm{Here} We include the proof of Proposition \ref{prop:marginal_optimality} in Appendix \ref{sec:Bayes_optimality_proof}. The regularity condition (Assumption \ref{ass:marginal_optimality}) is technical, which ensures the existence of $t(\alpha)$ satisfying Eq. (\ref{eqn:mFDR-alpha}) for any $\alpha \in A$.  

We should note that Proposition \ref{prop:marginal_optimality} does not constitute an entirely new discovery; the optimality of {\TPoP} has previously been established in specific statistical models. As an example, \cite{sun2007oracle} proved that {\TPoP} attains the smallest marginal false negative ratio in the case of a mixture model $X_i|\theta_i\sim\theta_i F_0 + (1-\theta_i) F_1$, where $\{ \theta_i \}_{i \in [d]}$ are independent Bernoulli random variables. Proposition \ref{prop:marginal_optimality} extends and adapts these results to general Bayesian models.

% \subsection{The optimal procedure with {\BFDR} control}\label{sec:bfdr}

%We show that the optimal procedure with {\BFDR} control is similar to truncating the local fdr ({\TPoP}). %However, to achieve optimality with {\BFDR} control, the truncation threshold need to depend on the data. 
% Indeed, the optimal procedure truncates the cumulative local fdr, and we call it the {\CPoP} procedure (standing for {\bf C}umulative {\bf Po}sterior {\bf P}robability). 

\paragraph{The {\CPoP} procedure for {\BFDR} control} 
%We next consider the optimal procedure with {\BFDR} control as in Definition \ref{def:BFDR-optimality}. 
The {\CPoP} procedure $\bC_P$, which represents {\tt C}umulative {\tt Po}sterior {\tt P}robability, truncates the local fdr $\{ P_j(\cD) \}_{j \in [d]}$ (c.f. Eq. (\ref{eqn:def-local-fdr})) at some data dependant threshold that is determined by the cumulative local fdr, 
\begin{align} \label{eqn:def-Cpop}
C_{P, j}( \cD;\lambda) = \ones\{P_j(\cD) < P_{\what K(\lambda, \cD)}(\cD) \}, ~~~~~ \bC_P = (C_{P, 1}, \ldots, C_{P, d}),
\end{align}
where $\what K(\lambda, \cD) \in [d]$ is the number of rejections given by
\begin{align}
\textstyle \what K(\lambda, \cD) \equiv&~ \textstyle  \arg\max_{K \in [d]} \Big( K  - \big( 1 - \lambda N / (K \vee 1) \big) \sum_{j = 1}^K P_{(j)}(\cD) \Big),\label{eqn:def-K}\\
N \equiv&~ \E_{\bbeta_0 \sim \Pi}[\#\{ j : j \not \in \cH_0(P)\}] = \E_{\bbeta_0 \sim \Pi}[\#\{ j : \beta_{0, j} \neq 0 \}]. \label{eqn:def-N}
\end{align}
Here $N$ stands for the expected number of nonnulls, and $\{ P_{(j)}(\cD) \}_{j \in [d]}$ are the order statistics of the local fdr $\{ P_j(\cD) \}_{j \in [d]}$ in increasing order $P_{(1)}(\cD) \le P_{(2)}(\cD) \le \cdots \le P_{(d)}(\cD)$.

Proposition \ref{prop:Bayes_optimality} below shows that the {\CPoP} procedure with a properly chosen $\lambda$ is the optimal test with {\BFDR} control (c.f. Definition \ref{def:BFDR-optimality}), under the following continuity assumption of the model distribution. 
\begin{assumption}[Continuity]\label{ass:Bayes_optimality}
Under the Bayesian model $\cD \sim P_{\bZ \vert \bbeta_0}$ with prior $\bbeta_0 \sim \Pi$, the distribution of the random vector $(P_1(\cD),P_2(\cD),...,P_d(\cD))$ is absolutely continuous to the Lebesgue measure on $\R^{d}$. 
\end{assumption}

\begin{proposition}[Optimality of {\CPoP}]\label{prop:Bayes_optimality}
Let $A = (\inf_\lambda \BFDR(\bC_P(\cdot;\lambda), \Pi),\sup_\lambda \BFDR(\bC_P( \cdot;\lambda), \Pi) )$ and let Assumption \ref{ass:Bayes_optimality} hold. Then for any $\alpha \in A$, there exists $\lambda = \lambda(\alpha) \in \R_{\ge 0}$ such that 
\begin{align}\label{eqn:BFDR-alpha}
\BFDR(\bC_P(\cdot;\lambda(\alpha)), \Pi) = \alpha. 
\end{align}
Moreover, for any test statistics $\bT: \Omega \to \{0, 1\}^d$ with $\BFDR(\bT, \Pi) \le \alpha$, we have 
\begin{equation}\label{eqn:BFDR_optimal}
\begin{aligned}
\mTPR(\bC_P( \cdot;\lambda(\alpha)), \Pi) \ge \mTPR(\bT, \Pi). 
\end{aligned}
\end{equation}
That is, $\bC_P( \cdot;\lambda(\alpha))$ is an $(\alpha,\Pi, 0)$-optimal procedure with {\BFDR} control (c.f. Definition \ref{def:BFDR-optimality}). %(note that $\bC_P$ implicitly depends on $\Pi$).
\end{proposition}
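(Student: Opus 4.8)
The plan is to cast the problem as a constrained optimization over all (possibly randomized) rejection rules and invoke a Lagrangian/Bayes-decision-theoretic argument, exactly parallel to the proof of Proposition~\ref{prop:marginal_optimality} but adapted to the $\BFDR$ functional and to the cumulative thresholding rule. First I would observe that because $\BFDR(\bT,\Pi) = \E_{\bbeta_0 \sim \Pi}\,\E_{\cD}[\FD/(\Rej \vee 1)]$ and $\mTPR(\bT,\Pi)$ is a ratio whose denominator $\E_{\bbeta_0 \sim \Pi}[|S|]$ does not depend on $\bT$, maximizing $\mTPR$ subject to $\BFDR \le \alpha$ is equivalent to maximizing $\E_{\cD,\bbeta_0}[\TD(\bT;\cD)]$ subject to $\E_{\cD,\bbeta_0}[\FD(\bT;\cD) - \alpha\,(\Rej(\bT;\cD)\vee 1)] \le 0$ (using the first display to rewrite the $\BFDR$ constraint in this ``$\FD - \alpha \Rej$'' form). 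Conditioning everything on $\cD$ and writing $T_j = T_j(\cD) \in \{0,1\}$, the per-datapoint objective becomes $\sum_j T_j\,\P(\beta_{0,j}\neq 0 \mid \cD) = \sum_j T_j (1 - P_j(\cD))$ and the per-datapoint constraint integrand becomes $\sum_j T_j\,P_j(\cD) - \alpha(\sum_j T_j \vee 1)$, since $\P(j \in \cH_0 \mid \cD) = P_j(\cD)$ by the definition~\eqref{eqn:def-local-fdr}.

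Next I would introduce a Lagrange multiplier: for $\mu \ge 0$ consider maximizing, pointwise in $\cD$, the quantity $\sum_j T_j(1-P_j(\cD)) - \mu\big(\sum_j T_j P_j(\cD) - \alpha(\sum_j T_j \vee 1)\big)$ over $T \in \{0,1\}^d$. The key combinatorial step is to show that the maximizer of this expression has exactly the ``cumulative'' form~\eqref{eqn:def-Cpop}–\eqref{eqn:def-K}: one first fixes the number of rejections $K$, notes that conditional on rejecting $K$ hypotheses the optimal choice is to reject the $K$ with smallest $P_j(\cD)$ (because the coefficient of $T_j$ in $\sum_j T_j(1-P_j) - \mu \sum_j T_j P_j = \sum_j T_j(1 - (1+\mu)P_j)$ is decreasing in $P_j$), and then optimizes over $K$, which recovers precisely the $\arg\max_K$ in~\eqref{eqn:def-K} after identifying $\lambda$ with an affine function of $\mu$ and matching the $\alpha K$ term against $(1 - \lambda N/K)\sum_{j\le K} P_{(j)}$ — here the normalization $N = \E_{\bbeta_0\sim\Pi}[\#\{j: \beta_{0,j}\neq 0\}]$ enters, and I expect this bookkeeping (getting the exact correspondence between $\mu$, $\alpha$, and $\lambda$) to be the fussiest part. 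Once this is done, $\bC_P(\cdot;\lambda)$ is shown to maximize the Lagrangian pointwise, hence in expectation, among all $\bT$, so for the correct multiplier it is optimal among all $\bT$ satisfying the constraint.

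Then I would handle the calibration of $\lambda(\alpha)$: the map $\lambda \mapsto \BFDR(\bC_P(\cdot;\lambda),\Pi)$ is continuous under Assumption~\ref{ass:Bayes_optimality} — absolute continuity of the law of $(P_1(\cD),\dots,P_d(\cD))$ guarantees that $\what K(\lambda,\cD)$, and hence the indicators in~\eqref{eqn:def-Cpop}, change continuously in distribution with $\lambda$ (ties in the $\arg\max$ and ties among order statistics occur with probability zero), so the intermediate value theorem yields $\lambda(\alpha)$ with $\BFDR(\bC_P(\cdot;\lambda(\alpha)),\Pi) = \alpha$ for every $\alpha$ in the open interval $A$. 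Finally I would close the Lagrangian duality loop: since $\bC_P(\cdot;\lambda(\alpha))$ exactly attains the constraint with equality and is the pointwise Lagrangian maximizer for the matching multiplier $\mu(\alpha) \ge 0$, a standard weak-duality argument shows that for any competitor $\bT$ with $\BFDR(\bT,\Pi) \le \alpha$ we have $\E[\TD(\bT)] \le \E[\TD(\bT)] - \mu(\alpha)(\text{constraint slack of }\bT) \le \E[\TD(\bC_P(\cdot;\lambda(\alpha)))]$, which translates back to $\mTPR(\bC_P(\cdot;\lambda(\alpha)),\Pi) \ge \mTPR(\bT,\Pi)$, giving the claim with $\eps = 0$. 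The main obstacle, as noted, is verifying that the seemingly ad hoc objective in~\eqref{eqn:def-K} is genuinely the Lagrangian of the $\BFDR$-constrained problem after the correct reparametrization; everything else is either a pointwise optimization over a finite set or a continuity argument mirroring Proposition~\ref{prop:marginal_optimality}.
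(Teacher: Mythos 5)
There is a genuine gap at the very first reduction, and it propagates through the rest of your argument. You rewrite the constraint $\BFDR(\bT,\Pi)\le\alpha$ as $\E_{\cD,\bbeta_0}[\FD(\bT)-\alpha(\Rej(\bT)\vee 1)]\le 0$, but these are not equivalent: $\BFDR$ is the expectation of the ratio $\FD/(\Rej\vee 1)$, whereas your linearized constraint is (essentially) the ratio-of-expectations condition defining $\mFDR$, and neither constraint set contains the other in general. As a consequence, your Lagrangian $\sum_j T_j(1-P_j(\cD))-\mu\bigl(\sum_j T_j P_j(\cD)-\alpha(\textstyle\sum_j T_j\vee 1)\bigr)$ is separable across $j$ (up to the trivial $\vee 1$ boundary case): its pointwise maximizer rejects exactly $\{j: P_j(\cD)<(1+\mu\alpha)/(1+\mu)\}$, i.e.\ a \emph{fixed-threshold} {\TPoP}-type rule, not the cumulative rule \eqref{eqn:def-Cpop}. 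The "key combinatorial step" you flag — matching the $\alpha K$ term against $(1-\lambda N/(K\vee1))\sum_{j\le K}P_{(j)}$ — cannot be made to work, because the $1/(K\vee 1)$ coefficient never arises from your linear penalty; it is precisely the ratio structure of $\BFDR$ that generates the data-dependent $\what K(\lambda,\cD)$ of \eqref{eqn:def-K}. Your closing weak-duality step also inherits the problem: for a competitor with $\BFDR(\bT,\Pi)\le\alpha$ the sign of your linearized "constraint slack" is not controlled, so the chain of inequalities does not go through, and at best you would prove optimality within the $\mFDR$-type class, which is not the claim.

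The fix is to take the Lagrangian on the $\BFDR$ functional itself, which is what the paper does: set $U(\bT,\lambda)=\mTPR(\bT,\Pi)-\lambda\,\BFDR(\bT,\Pi)$, condition on $\cD$, and write the inner objective as $\sum_{K}\sum_j \ones\{\Rej(\bT)=K,\,T_j=1\}\bigl(1/N-(1/N+\lambda/(K\vee 1))P_j(\cD)\bigr)$; for each fixed $K$ the best choice is the $K$ smallest local fdrs, and optimizing over $K$ recovers exactly $\what K(\lambda,\cD)$, so $\bC_P(\cdot;\lambda)$ maximizes $U(\cdot,\lambda)$. Your calibration step (continuity in $\lambda$ plus the intermediate value theorem under Assumption \ref{ass:Bayes_optimality}) is in the right spirit, though the paper makes it rigorous via monotonicity of the conditional $\FDP$ in $\lambda$ and a lemma on expectations of monotone step functions. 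The final comparison then avoids duality bookkeeping entirely: for any $\bT$ with $\BFDR(\bT,\Pi)\le\alpha$, $\mTPR(\bT,\Pi)=U(\bT,\lambda)+\lambda\BFDR(\bT,\Pi)\le U(\bC_P(\cdot;\lambda(\alpha)),\lambda)+\lambda\alpha=\mTPR(\bC_P(\cdot;\lambda(\alpha)),\Pi)$, using $\BFDR(\bC_P(\cdot;\lambda(\alpha)),\Pi)=\alpha$.
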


The proof of Proposition \ref{prop:Bayes_optimality} is contained in Appendix \ref{sec:Bayes_optimality_proof_BFDR}. We note that the continuity condition (Assumption \ref{ass:Bayes_optimality}) is technical, ensuring the existence of $\lambda(\alpha)$ satisfying Eq. (\ref{eqn:BFDR-alpha}) for any $\alpha \in A$. This assumption is mild and is satisfied as long as $\cD$ admits a continuous probability density function, and the map $\cD\mapsto (P_1(\cD),...,P_d(\cD))$ is non-degenerate almost everywhere. A concrete example satisfying this assumption is the Bayesian linear model (see Assumption \ref{ass:Bayesian_linear_model} for details). Furthermore, we note that results similar to Proposition \ref{prop:Bayes_optimality} have also been shown in specific statistical models \cite{muller2004optimal,muller2006fdr}. Proposition \ref{prop:Bayes_optimality} extends and adapts these results to general Bayesian models. 

%Indeed, \cite{sun2007oracle} proved that {\TPoP} is optimal in a sense that it reaches smallest marginal false negative ratio in the case of a mixture model $X_i|\theta_i\sim\theta_i F_0 + (1-\theta_i) F_1$, where $\theta_i$ is independent Bernoulli random variables. Proposition \ref{prop:marginal_optimality} extended such results to Bayesian models of our setup. 

%\noindent\textbf{Remark.} Assumption \ref{ass:Bayes_optimality} is in general satisfied under the general Bayesian setting with extra conditions $\cD=(X,y)\in \R^{c}(c\geq d)$, $\cD$ has probability density function and the map $\cD\mapsto (P_1(\cD),...,P_d(\cD))$ is non-degenerate almost everywhere. A concrete example satisfying this Assumption is the Bayesian linear model (see Assumption \ref{ass:Bayesian_linear_model} for details), in which one can show the above condition holds.

\subsection{The limiting power in Bayesian linear model}\label{sec:asymptotic_FDP_TPP}

%\sm{I am here}
%In Section \ref{sec:mfdr}, \ref{sec:bfdr} we consider the general Bayesian setting and propose two procedures that achieve the {\mFDR} and {\BFDR}-optimality respectively. However, with the following extra assumption of a Bayesian linear model, we are able to precisely calculate the FDP-TPP trade-off curves of the optimal procedures in the asymptotic limit. 

We next derive the limiting power of {\TPoP} and {\CPoP} within the Bayesian linear model. The precise statement of the Bayesian linear model is presented in the forthcoming assumption. 

\begin{assumption}[Bayesian linear models]\label{ass:Bayesian_linear_model}
Assume that we observe $n$ samples $\cD \equiv \{(\bx_i, y_i) \}_{i \in [n]}$, wherein a linear relationship is formed between the response and covariates $y_i = \< \bx_i, \bbeta_0 \> + \eps_i$.  In this equation, $(\eps_i)_{i \in [n]} \sim_{i.i.d.} \cN(0, \sigma^2)$ are Gaussian noises, and $\bbeta_0 = (\beta_{0, 1}, \ldots, \beta_{0, d})^\sT \in \R^d$ is the coefficient vector. In matrix form, we have $\Yb = \Xb \bbeta_0 +\beps$ where $\Yb = (y_1, \ldots, y_n)^\sT \in \R^n$ and $\Xb = (\bx_1, \ldots, \bx_n)^\sT = (\xb_1, \ldots, \xb_d) \in \R^{n \times d}$. We further assume a product prior on $\bbeta_0$, wherein $(\beta_{0, j})_{j \in [d]} \sim_{i.i.d.} \Pi \in \ProbFam(\R)$. It is also assumed that the prior distribution gives $\Pi = \pi_0 \delta_0 + (1 - \pi_0)\Pi_\star$, where $\delta_0$ is the Dirac-delta distribution at $0$, $\pi_0 \in (0, 1)$ is the proportion of null variables, and $\Pi_\star \in \ProbFam(\R)$ is a general distribution without any mass at $0$. Finally, we assume that the covariates follow the isotropic Gaussian distribution $(\bx_i)_{i \in [n]} \sim_{i.i.d.} P_\bX = \cN(\bzero, (1/n) \id_d)$. 
\end{assumption}

%\noindent
%\textbf{Remark.}
%Our assumption on the input covariates $\bx_i$ is technical and would enable us to derive the asymptotic FDP and TPP.  It is also interesting to consider the case when $\bx_i$ are anisotropic, for example, are i.i.d. $\cN(\bzero,(1/n)\Sigma)$ with a general covariance matrix $\Sigma$. In this paper, we focus on isoptropic Gaussian design, and we defer anisotropic case to future work. \\

%The FDP and TPP can be expressed as a functional of the joint empirical distribution of the coordinates $\hat \mu \equiv (1 / d) \sum_{j = 1}^d \delta_{(\bbeta_{0, j}, \cP_j(\cD))}$. Indeed, for the FDP and TPP of the {\TPoP} statistics $\bT(t; \cdot)$, we have 
%\[
%\FDP(\bT_P(t; \cdot)) = \frac{\P_{(\beta_0, P_j) \sim \hat \mu}( \beta_0 = 0, P_j < t)}{\P_{(\beta_0, P_j) \sim \hat \mu}(P_j < t)},~~~~ \TPP(\bT_P(t; \cdot)) = \frac{\P_{(\beta_0, P_j) \sim \hat \mu}( \beta_0 \neq 0, P_j < t)}{\P_{(\beta_0, P_j) \sim \hat \mu}( \beta_0 \neq 0)}. 
%\]

Despite this strong model assumption, it is worth noting that we will later develop procedures that control frequentist {\FDR} under much weaker assumptions of the linear model. But for now, our focus is deriving the power of {\TPoP} and {\CPoP}, under the Bayesian linear model with this strong assumption. 

% It is worth noting that Assumption \ref{ass:Bayesian_linear_model} is relatively strong, and may not align with real datasets. However, we will develop procedures in Section \ref{sec:FO}, which control frequentist {\FDR}, even when this model assumptions are not met. We expect that these proposed procedures are powerful if this model turns out to be approximately correct. On the other hand, in this section, we focus on this strong assumption, and our goal is to figure out the exact power of {\TPoP} and {\CPoP}. 

\paragraph{The Bayes risk in the proportional limit regime} The Bayesian linear model holds a special interest in the high-dimensional regime, where $n, d \to \infty$ and $n / d \to \delta \in (0, \infty)$. This specific regime has been widely examined in the literature \cite{tanaka2002statistical,donoho2009message, rangan2011generalized, bayati2011dynamics, donoho2016high,barbier2019optimal, barbier2019adaptive, barbier2016mutual,celentano2020lasso}. Prior studies have focused on deriving the asymptotic Bayes risk, defined as
\begin{equation}\label{eqn:limiting_risk_linear_model}
R(\delta) = \lim_{n, d \to \infty, n/d \to \delta} \frac{1}{d} \big\| \bbeta_0  - \E[\bbeta_0 | \cD ] \big\|_2^2, 
\end{equation}
where $\E[\bbeta_0 | \cD ]$ is the posterior expectation of the coefficient vector, the Bayes optimal estimator. Notably, Tanaka \cite{tanaka2002statistical} used heuristic statistical physics methods to provide a simple formula for the asymptotic Bayes risk of the high-dimensional linear model, which coincides with the Bayes risk of a scalar Bayes estimation problem. The validity of this formula was first rigorously proved by \cite{barbier2016mutual} through the interpolation method.

More specifically, the high dimensional Bayesian linear model is tightly connected to the following scalar Bayes estimation problem: we consider a scalar signal, $\beta_0$, having a prior distribution $\Pi$, and we obtain a noisy observation, $Y$, of the signal via an additive Gaussian channel, as expressed in:
\begin{equation}\label{eqn:scalar_model}
{\rm Signal:}~~~ \beta_0 \sim \Pi, ~~~~~~~~~ {\rm Observation:}~~~ Y = \beta_0 + \tau G \in \R,~~~~~~~~~ {\rm Noise:}~~~ G \sim \cN(0, 1),
\end{equation}
where $\tau \in \R_{\ge 0}$ represents the noise level to be determined. In this scalar model, given the observation $Y$, the Bayes optimal estimator, considering the squared loss, is the posterior expectation estimator, as presented in:
\begin{equation}\label{eqn:posterior_expectation_low}
\what \beta = \cE(Y; \Pi, \tau) \in \R,~~~~~~~ {\rm where}~~~ \cE(y; \Pi, \tau) = \E_{(\beta_0, Z) \sim \Pi \times \cN(0, 1)}[ \beta_0 \vert \beta_0 + \tau Z = y]. 
\end{equation}
As a result, the Bayes risk of the scalar model yields:
\begin{equation}
R(\tau, \delta) = \E_{(\beta_0, Z) \sim \Pi \times \cN(0, 1)}[(\beta_0 - \cE(\beta_0 + \tau Z ; \Pi, \tau))^2]. 
\end{equation}
Tanaka \cite{tanaka2002statistical} shows that the limiting risk (\ref{eqn:limiting_risk_linear_model}) of the high dimensional Bayesian linear model, under Assumption \ref{ass:Bayesian_linear_model}, coincides with the limiting risk of the scalar Bayes estimation problem
\begin{equation}
R(\delta) = R(\tau_\star, \delta).\label{eqn:equi_risk}
\end{equation}
Here, the noise level $\tau_\star$ is given by the global minimizer of a potential function $\phi$ 
\begin{align}
\tau_\star=\argmin_{\tau\geq 0}
\phi(\tau^2; \Pi, \delta, \sigma^2) \equiv\argmin_{\tau\geq 0}\Big\{ \frac{\delta \sigma^2}{2 \tau^2} - \frac{\delta}{2} \log\Big( \frac{\delta \sigma^2}{\tau^2}\Big) + \MI(\Pi, \tau^2)\Big\},\label{eqn:potential_effective_noise}
\end{align}
where $\MI$ is the mutual information between $\beta_0$ and $Y$ in the model \eqref{eqn:scalar_model}, 
\[
\MI(\Pi, \tau^2) = \E_{\beta_0, Y}\Big[\log\Big( \frac{p(Y| \beta_0)}{p(Y)}\Big) \Big] = - \frac{1}{2} - \E_{\beta_0, Y}\log\Big\{ \int e^{-(Y - \beta)^2 / (2\tau^2)} \Pi(\de \beta) \Big\}. 
\] 
Taking derivative of $\phi$ with respect to $\tau^2$, we deduce that $\tau_\star$ satisfies the following self-consistent equation
\begin{equation}\label{eqn:self_consistent}
\tau^2 = \sigma^2 + \frac{1}{\delta} R(\tau, \delta). 
\end{equation}

\paragraph{The limiting {\FDP} and {\TPP}} Our primary focus here is the asymptotic {\FDP} and {\TPP} (c.f. Eq. (\ref{eqn:FDP_TPP})) of the {\TPoP} and {\CPoP}, which depend on the joint empirical distribution of $\{ (\beta_{0, j}, P_j(\cD))\}_{j \in [d]}$ (recall that $P_j(\cD) = \P(\beta_{0, j} = 0| \cD)$ gives the local fdr). Using the heuristic replica calculation in Appendix \ref{sec:justification_conj_1}, we demonstrate that the joint empirical distribution of $\{ (\beta_{0, j}, P_j(\cD))\}_{j \in [d]}$ is likewise linked to its counterpart in the scalar model. Specifically, we reconsider the scalar model~\eqref{eqn:scalar_model}, and consider the associated hypothesis testing problem: test the null hypothesis that $\beta_0 = 0$ given the observation $Y$. According to the Neyman-Pearson lemma, the optimal test corresponds to the likelihood ratio test, equivalent to truncating the local fdr $\Phi$ of the scalar model, as given by
\begin{equation}\label{eqn:posterior_probability_low}
\Phi = \PostProb(Y; \Pi, \tau_\star), ~~~~~~~ \PostProb(y; \Pi, \tau) =  \P_{(\beta_0, Z) \sim \Pi \times \cN(0, 1)}( \beta_0 = 0 \vert \beta_0 + \tau Z = y ). 
\end{equation}
The replica calculations suggest that for any sufficiently smooth function $\psi: \R \times [0, 1] \mapsto \R$, there is
\begin{align}\label{eqn:replica-main-test-result}
\textstyle \lim_{d\to \infty, n/d \to \delta} \frac{1}{d}\sum_{j = 1}^d \psi(\beta_{0, j}, P_j(\cD))  = &~ \E[ \psi(\beta_0, \Phi )],
\end{align}
where $(\beta_0, \Phi)$ follows the joint distribution specified by Eq.~(\ref{eqn:scalar_model}) and (\ref{eqn:posterior_probability_low}). This equation gives rise to the subsequent conjecture, stating that the asymptotic {\FDP} and {\TPP} of {\TPoP} (and also {\CPoP}) correspond to the type-I error and the power of the scalar hypothesis testing problem. 

\begin{conjecture}[Limiting {\FDP} and {\TPP} of {\TPoP} and {\CPoP}]\label{conj:Posterior_probability_curve}
Let Assumption \ref{ass:Bayesian_linear_model} hold. The {\FDP} and {\TPP} of the {\TPoP} procedure (Eq.~(\ref{eqn:def_T_P})) with parameter $t$ gives
\begin{equation}\label{eqn:lim_tpop}
\begin{aligned}
\lim_{d\to \infty, n/d \to \delta} \FDP(\bT_P(\,\cdot\,;t)) =&~ \P( \beta_0 = 0 | \Phi  < t), \\
\lim_{d\to \infty, n/d \to \delta} \TPP(\bT_P( \,\cdot\,;t)) =&~ \P( \Phi  < t | \beta_0 \neq 0). 
\end{aligned}
\end{equation}
The {\FDP} and {\TPP} of the {\CPoP} procedure (Eq.~(\ref{eqn:def-Cpop})) with parameter $\lambda$ gives
\begin{equation}\label{eqn:lim_cpop}
\begin{aligned}
\lim_{d\to \infty, n/d \to \delta} \FDP(\bC_P( \,\cdot\,;\lambda)) =&~ \P( \beta_0 = 0 | \Phi  < t_\star(\lambda)), \\
\lim_{d\to \infty, n/d \to \delta} \TPP(\bC_P( \,\cdot\,;\lambda)) =&~ \P( \Phi  < t_\star(\lambda) |  \beta_0 \neq 0 ), 
\end{aligned}
\end{equation}
where $t_\star(\lambda)$ is given by
\begin{equation}\label{eqn:lambda_to_tstar}
t_\star(\lambda) = \argmax_{t \in [0, 1]} \Big( \P(\Phi < t) - \big( 1 - \lambda (1 - \pi_0) / \P(\Phi < t) \big) \cdot \E[ \Phi \cdot \ones\{ \Phi < t\} ] \Big). 
\end{equation}
\end{conjecture}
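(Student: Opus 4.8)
The plan is to reduce both {\TPoP} and {\CPoP} to the scalar limit identity~\eqref{eqn:replica-main-test-result}, which I treat as the heuristic input of the argument (its derivation by the replica method being carried out in Appendix~\ref{sec:justification_conj_1}). Granting~\eqref{eqn:replica-main-test-result}, the {\TPoP} formulas~\eqref{eqn:lim_tpop} follow by applying it to three test functions of the pair $(\beta_{0,j},P_j(\cD))$. Writing $\Rej(\bT_P(\cdot;t))/d = \frac1d\sum_{j=1}^d\ones\{P_j(\cD)<t\}$, $\FD/d = \frac1d\sum_{j=1}^d\ones\{\beta_{0,j}=0\}\ones\{P_j(\cD)<t\}$, $\TD/d = \frac1d\sum_{j=1}^d\ones\{\beta_{0,j}\neq 0\}\ones\{P_j(\cD)<t\}$, and $|S|/d = \frac1d\sum_{j=1}^d\ones\{\beta_{0,j}\neq 0\}$, identity~\eqref{eqn:replica-main-test-result} gives $\Rej/d\to\P(\Phi<t)$, $\FD/d\to\P(\beta_0=0,\Phi<t)$, $\TD/d\to\P(\beta_0\neq 0,\Phi<t)$ and $|S|/d\to 1-\pi_0>0$. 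Since $1-\pi_0>0$, the ``$\vee\,1$'' in the denominators of~\eqref{eqn:FDP_TPP} is immaterial in the limit, and taking ratios yields $\FDP\to\P(\beta_0=0\mid\Phi<t)$ and $\TPP\to\P(\Phi<t\mid\beta_0\neq 0)$, which is~\eqref{eqn:lim_tpop}.

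For {\CPoP}, the key step is to show that the data-dependent cutoff $P_{\what K(\lambda,\cD)}(\cD)$ concentrates on the deterministic level $t_\star(\lambda)$, so that {\CPoP} with parameter $\lambda$ rejects, asymptotically, the same set of hypotheses as {\TPoP} with threshold $t_\star(\lambda)$. Using $N=(1-\pi_0)d$ exactly (Eq.~\eqref{eqn:def-N}) and the convergence of the empirical distribution of $\{P_j(\cD)\}_{j\in[d]}$ to the law of $\Phi$ (the case of~\eqref{eqn:replica-main-test-result} with $\psi$ depending only on its second argument), the objective in~\eqref{eqn:def-K}, rescaled by $1/d$ and evaluated at $K=\lfloor\P(\Phi<t)\,d\rfloor$, converges to
\[
G(t) \;=\; \P(\Phi<t) - \Big(1-\frac{\lambda(1-\pi_0)}{\P(\Phi<t)}\Big)\,\E\big[\Phi\,\ones\{\Phi<t\}\big],
\]
uniformly in $t$; here I use $\frac1d\sum_{j=1}^{\lfloor\P(\Phi<t)d\rfloor}P_{(j)}(\cD)\to\E[\Phi\,\ones\{\Phi<t\}]$, which follows from convergence of order statistics once $\Phi$ is atomless. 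Comparing with~\eqref{eqn:lambda_to_tstar} shows $\argmax_{t}G(t)=t_\star(\lambda)$, hence $\what K(\lambda,\cD)/d\to\P(\Phi<t_\star(\lambda))$ and $P_{\what K(\lambda,\cD)}(\cD)\to t_\star(\lambda)$. Substituting the threshold $t_\star(\lambda)$ into the already-established {\TPoP} limits, and using continuity in $t$ of the right-hand sides of~\eqref{eqn:lim_tpop}, gives~\eqref{eqn:lim_cpop}.

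Several technical points are needed to turn this into a rigorous statement. First,~\eqref{eqn:replica-main-test-result} is stated for sufficiently smooth $\psi$, whereas the functions above contain the indicators $\ones\{\cdot<t\}$ and $\ones\{\beta_0=0\}$; these require a smoothing-and-sandwiching argument, using that $\Phi=\PostProb(Y;\Pi,\tau_\star)$ is a continuous function of the continuous observation $Y$ and therefore has no atoms (so every threshold $t$ is a continuity point of $t\mapsto\P(\Phi<t)$), and that the prior atom $\{\beta_0=0\}$ can be isolated by replacing $\ones\{\beta_0=0\}$ with a smooth bump that decreases to $\ones\{|\beta_0|<\eps\}$ and letting $\eps\to0$. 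Second, the argmax-convergence step needs a well-separated-maximum condition on $G$: one verifies that $G$ has a unique maximizer in the interior of $[0,1]$ and then passes from uniform convergence of the rescaled objective to convergence of the discrete maximizer $\what K(\lambda,\cD)/d$, treating boundary and non-uniqueness cases separately. Third --- and this is the genuine obstacle --- the whole argument rests on the heuristic validity of~\eqref{eqn:replica-main-test-result} itself; a fully rigorous proof would require replacing the replica computation by an interpolation or AMP-type argument that pins down the joint law of $(\beta_{0,j},P_j(\cD))$, in the spirit of the rigorous proofs of the scalar equivalence for the Bayes risk. I expect this last point, rather than the bookkeeping in the first two, to be the main difficulty; the rest is a fairly standard combination of laws of large numbers, order-statistic convergence, and the continuous mapping theorem.
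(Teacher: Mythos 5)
Your proposal follows essentially the same route as the paper's own heuristic justification in Appendix~\ref{sec:justification_conj_1}: treat Eq.~\eqref{eqn:replica-main-test-result} (Formalism~\ref{fm:post_mean_zero}) as the input, obtain the {\TPoP} limits by applying it to the indicator test functions defining $\FD$, $\TD$, $\Rej$, $|S|$ and taking ratios, and obtain the {\CPoP} limits by showing the rescaled objective in Eq.~\eqref{eqn:def-K} converges to the population objective so that the data-dependent threshold $P_{\what K(\lambda,\cD)}(\cD)$ concentrates at $t_\star(\lambda)$ (your $G$ is the paper's $h$). Your added caveats on smoothing the indicators, uniqueness of the maximizer, and the non-rigorous status of the replica input are accurate and consistent with the paper's framing of the statement as a conjecture.
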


The intuitions of the conjecture are provided in Appendix \ref{sec:justification_conj_1}. Notably, under similar assumptions of the Bayesian linear model, the analogous asymptotics of {\FDP} and {\TPP} have been rigorously derived for the thresholded LASSO procedure in \cite{weinstein2020power}, leveraging the approximate message passing (AMP) machinery. Applying the AMP machinery to our conjecture is not a straightforward task. The proof of this conjecture poses an intriguing open question and is a topic we plan to explore in future works. 
 %In this work, we use the statistical physics tool, replica method, to derive Conjecture \ref{conj:Posterior_probability_curve}. Because of the physics tools used in the derivation is a heuristic but not a mathematical proof, this power analysis is left as a conjecture. The derivation procedure is contained in Appendix \ref{sec:powerproof}. 

%Note that we can choose $\lambda$ or $t$ by solving $\lim_{d\to \infty, n/d \to \delta} \FDP = \alpha$, then corresponding {\TPoP} or $\CPoP$ will have $\FDP=\alpha$ asymptotically, and thus will be {\mFDR}, {\BFDR} optimal with level $\alpha$ respectively. Furthermore, when $\lambda$ is properly chosen corresponding to each $t$, {\TPoP} and $\CPoP$ has same asymptotic limit of {\FDP} and $\TPP$.

%Note that the thresholds $t$ and $\lambda$ control the tradeoff between {\FDP} and {\TPP}. 
% to verify Conjecture \ref{conj:Posterior_probability_curve}
 
Conjecture \ref{conj:Posterior_probability_curve} immediately reveals that, despite being different procedures, {\TPoP} and {\CPoP} yield asymptotically identical {\FDP}-{\TPP} tradeoff curves. This is not unexpected: {\CPoP} corresponds to truncating the local fdr at a certain data-dependent threshold; in high dimension, this threshold will concentrate and coincide with the threshold employed in {\TPoP}.

\subsection{Numerical simulations}\label{sec:simulation_Bayes}

\begin{figure}\centering
\includegraphics[width=0.9\textwidth]{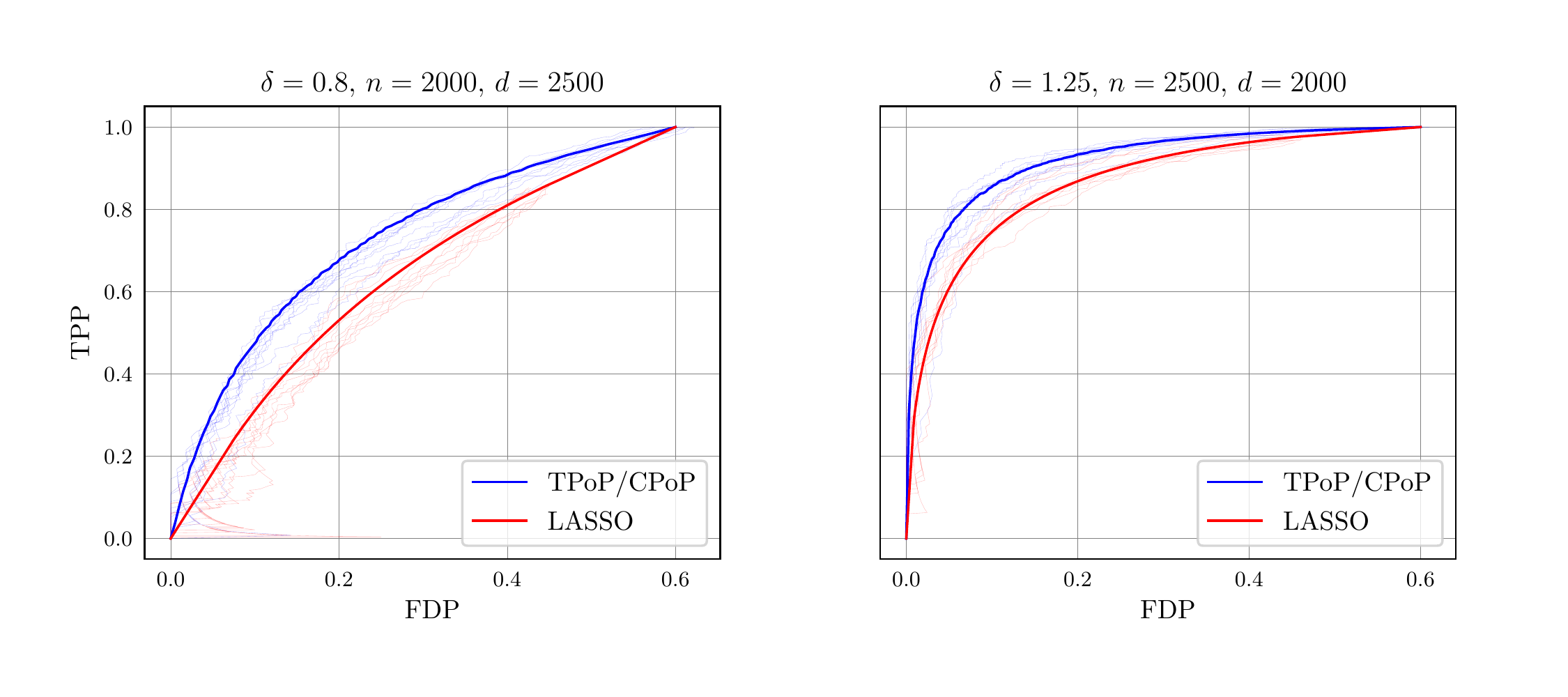}
\vskip-0.3cm
\caption{FDP-TPP tradeoff curves of {\TPoP}/{\CPoP} and thresholded LASSO. Dark thick lines are the analytical prediction of $\lim\FDP$ and $\lim\TPP$, and thin light curves represent realizations of each procedure from 10 simulated instances. Each panel corresponds to different values of $\delta$: from left to right, $\delta=0.8,1.25$ and $n=2000,2500$ respectively. In all panels, $\sigma=0.25$, and $\Pi$ is a three delta prior with mass 0.6 at zero and mass 0.2 at both $\pm 1$. } 
\label{fig:comparison}
\end{figure}

We next perform numerical simulations illustrating the {\FDP}-{\TPP} tradeoff curves of {\TPoP}/{\CPoP} and the thresholded LASSO procedure, considering two distinct values of $\delta = n/d$. It is important to note that the trade-off curve for {\CPoP} aligns with that of {\TPoP} as both procedures reject hypotheses with a small local fdr.

The thresholded LASSO procedure rejects the hypotheses with large absolute value of the corresponding LASSO coefficient. More specifically, thresholded LASSO rejects the $j$-th hypothesis when $|\what{\beta}_j(\lambda)|>t$ for some cutoff $t$, where $\what\bbeta(\lambda)=\argmin_{\bbeta}\frac12\lVert \Yb -\Xb \bbeta \rVert_2^2+\lambda\| \bbeta\|_1$. \cite{weinstein2020power} derives the asymptotic {\FDP} and {\TPP} of the thresholded LASSO procedure by deriving the following formula: in the limit of $n,d\rightarrow\infty$ and $n / d \to \infty$, for any sufficiently smooth function $\psi: \R \times [0, 1] \mapsto \R$, we have 
\begin{equation}\label{eqn:lasso_limit}
\lim_{d\to \infty, n/d \to \delta} \frac{1}{d}\sum_{j = 1}^d \psi(\beta_{0, j}, \what{\beta}_j(\lambda))  =  \E_{(\beta_0,Z)\sim\Pi\times\cN(0,1)}[ \psi(\beta_0, \eta_{\alpha'\tau'}(\beta_0+\tau'Z) )].
\end{equation}
Here, $\alpha'$ and $\tau'$ are the unique solutions of the self-consistent equation given by:
\begin{equation}\label{eqn:scf-lasso}
\begin{aligned}
\tau'^2&=\sigma^2+\frac1\delta\E_{(\beta_0,Z)\sim\Pi\times\cN(0,1)}(\eta_{\alpha'\tau'}(\beta_0+\tau'Z)-\beta_0)^2, \\
\lambda&=\bigg(1-\frac1\delta\P_{(\beta_0,Z)\sim\Pi\times\cN(0,1)}(|\beta_0+\tau'Z|\ge\alpha'\tau')\bigg)\alpha'\tau',
\end{aligned}
\end{equation}
where $\eta_\theta(x)\equiv \sign(x)\cdot(|x|-\theta)_+$ gives the soft-thresholding operator. The characterization of the joint empirical distribution of $\{(\beta_{0, j}, \what \beta_j(\lambda))\}_{j \in [d]}$ mirrors our Formalism \ref{fm:post_mean_zero}, instrumental in deducing Conjecture \ref{conj:Posterior_probability_curve}. This characterization can then be employed to derive the limiting {\FDP} and {\TPP} of the thresholded LASSO procedure. Finally, \cite{weinstein2020power} demonstrates that the optimal choice of $\lambda$ in the thresholded LASSO procedure, yielding the optimal {\FDP}-{\TPP} trade-off curve, is provided by:
\begin{equation}\label{eqn:lasso_lambda}
\lambda_\star=\argmin_{\lambda>0}\lim_{d\to \infty, n/d \to \delta}\frac1d \big\| \what\beta(\lambda)-\beta_0 \big\|_2^2. 
\end{equation}
% it is known that for each cutoff $t$, the asymptotic limit of {\FDP} and $\TPP$ are minimized and maximized respectively, amongst the thresholded LASSO procedure with different choices of $\lambda$. 

Figure \ref{fig:comparison} showcases the analytical predictions of {\FDP} and {\TPP} for {\TPoP} and thresholded LASSO, alongside numerically simulated curves. We simulate 10 instances of $(\Yb, \Xb)$ from the Bayesian linear model, as detailed in Assumption \ref{ass:Bayesian_linear_model}, for $\delta=0.8, 1.25$ ($n=2000,2500$ and $d=2500,2000$ respectively). We choose $\sigma=0.25$ and $\Pi=0.6\cdot\delta_0+0.2 \cdot\delta_{-1}+0.2 \cdot\delta_{1}$ (so that $\pi_0=0.6$, and $\Pi_\star= 0.5 \cdot\delta_{-1}+ 0.5\cdot \delta_{1}$).
For each {\TPoP} simulated curve, we fix a linear model instance, gradually raise the cutoff $t$ from $0$ to $1$, and compute the empirical $(\FDP,\TPP)$ for each cutoff $t$. For analytical curves of {\TPoP}, we first solve the self-consistent equation outlined in Eq. \eqref{eqn:self_consistent} to obtain $\tau_\star$ for each $t$, and then derive the asymptotic $\FDP(\bT_P( \cdot; t))$ and $\TPP(\bT_P( \cdot; t))$ following Conjecture \ref{conj:Posterior_probability_curve}. For each simulated curve of the thresholded LASSO, we first determine the optimal regularization parameter $\lambda_\star$ according to Eq. (\ref{eqn:lasso_lambda}), then increase the cutoff $t$ from $0$ to $3$ and calculate the empirical $(\FDP,\TPP)$ at each cutoff $t$. For thresholded LASSO analytical curves, we use Eq. \eqref{eqn:lasso_limit} and \eqref{eqn:scf-lasso} to calculate the limiting $(\FDP, \TPP)$. 

As shown in Figure \ref{fig:comparison}, the simulated curves closely align with the corresponding analytical curves, which are derived using Conjecture \ref{conj:Posterior_probability_curve}. Moreover, at the same {\FDP} level, {\TPoP} consistently achieves a higher {\TPP} than the optimally regularized thresholded LASSO. These findings are in line with the optimality results presented for {\TPoP} and {\CPoP} in Proposition \ref{prop:marginal_optimality} and \ref{prop:Bayes_optimality}.

% \begin{figure}
% \includegraphics[width=0.9\textwidth]{figs/fig2}
% \caption{Bayes/thresholded LASSO tradeoff curves. Thin curves represent realizations and thick curves are the theoretical predictions. Parameters for each simulations are the same with that of figure \ref{fig:optimalcurve}.}
% \label{fig:comparison}
% \end{figure}

% \section{Optimal procedures with frequentist {\FDR} control}\label{sec:FO}

\section{Achieving the optimal power with frequentist {\FDR} control}\label{sec:FO}

%In this section, we introduce our main testing procedures, the {\PoPCe} and {\PoEdCe}. We assume the Model-X setting from Section \ref{sec:freq}. Namely, the marginal distribution of the covariates $\bX$ of  $P\in P(\cX\times \cY)$ is known to be $P_\bX$, i.e., $P\in \cM(P_\bX)$. We claim that our proposed procedures can achieve FDR-optimality, in the sense that they have valid FDR control under finite sample Model-X setting, and asymptotically achieve the optimal $\TPP$ when the data are from a Bayesian linear model (Assumption \ref{ass:Bayesian_linear_model}). For  {\PoPCe} and {\PoEdCe}, we  assume the prior $\Pi$, noise level $\sigma^2$ of the Bayesian linear model are known. An empirical version of {\PoEdCe}, $\EPoEdCe$, which guarantee FDR-optimality under arbitrary Bayesian linear model, is also proposed in this section.

We have established that {\TPoP} and {\CPoP} are Bayes-optimal under the condition of correct model specification, though they may not control the frequentist {\FDR} in cases of misspecified models. As Lemma \ref{lem:bound} illustrates, the power (\mTPR) of the optimal test with frequentist {\FDR} control cannot exceed the power of the optimal test with {\BFDR} control, which aligns with the power of {\CPoP}. This naturally leads us to question whether the inequality of Lemma \ref{lem:bound} is tight. More precisely, can a test with frequentist {\FDR} control achieve the optimal power of {\CPoP}? This section introduces two testing procedures that affirmatively answer this question.

In Section \ref{sec:CRT-eBH}, we first revisit three recently proposed methodologies designed to control the {\FDR} from finite samples. Building on these methodologies, in Section \ref{sec:PoPCe}, we devise procedures {\PoPCe} and {\PoEdCe} that control the frequentist {\FDR} in linear models within the Model-X setting, given a known $\cL(\bX)$. We demonstrate that these two procedures achieve the power of {\CPoP} asymptotically, assuming correctly specified models, in Section \ref{sec:FDR_optimality}.  Numerical simulations of the proposed procedures can be found in Section \ref{sec:numeric-frequentist}. 

%\sm{Reemphasize the question, whether there is xxxx which control frequentist FDR and achieve the optimal statistical power...} \sm{Say Lemma 1 tells us that, as long as we an design a procedure that attains the Bayes powerxxxx, then it is frequentist optimal xxx. }
% Finally, in Section \ref{sec:EPoEdCe} we  propose an empirical Bayes variant of {\PoEdCe} and show its validity and optimality without knowing the prior. 

%\lc{should be "in the Model-X setting with known $\cL(\bX)$"? }

% In this section, we develop optimal procedures that control {\FDR} from finite samples. We will build on several recently developed testing procedures, and we review them in Section \ref{sec:CRT-eBH}. In Section \ref{sec:PoPCe}, we introduce the {\PoPCe} and {\PoEdCe} procedures, and show their optimality in Section \ref{sec:FDR_optimality}. Finally, we propose an empirical Bayes variant of {\PoEdCe}, and show its optimality in Section \ref{sec:EPoEdCe}. 

% \subsection{Conditional randomization test (\CRT) and Benjamini-Hochberg with e-values (\eBH)}\label{sec:CRT-eBH}

\subsection{Building blocks}\label{sec:CRT-eBH}

%Before presenting {\PoPCe} and {\PoEdCe}, we briefly introduce several procedures  that are used in the construction of them. 

{\PoPCe} and {\PoEdCe} are built upon three multiple testing methodologies: conditional randomization test (\CRT), distilled conditional randomization test (\dCRT), and Benjamini-Hochberg with e-values (\eBH). A brief review of these methodologies is provided below.

\vskip0.2cm
\noindent
\textbf{Conditional Randomization Test (\CRT)} \cite{candes2018panning}. Consider the model setup as described in Section~\ref{sec:model-setup}. Given the dataset $(\Yb, \Xb)$ and access to the joint distribution of the covariates $\cL(\bX)$, the conditional randomization test transforms a base statistic into a valid p-value. Here, the base statistic, denoted as $T_j=T(\Yb, \Xb_{-j}, \xb_{j})$, provides an estimate of the contribution of covariate $X_j$ to the outcome $Y$. This p-value will be valid under the null hypothesis $H_{0j}: Y \perp X_j | \bX_{-j}$. Specifically, {\CRT} calculates a p-value $p_j$ by executing the subsequent three steps:
\begin{itemize}
\item[(1)] Generate $K$ conditionally independent covariates $\tilde \xb_j^{(k)}| \Xb_{-j} \sim \cL(X_j | \bX_{-j})$ for $k \in [K]$, where $\cL(X_j | \bX_{-j})\in\ProbFam(\R)$ is the conditional distribution induced by $\cL(\bX)$. 
\item[(2)] Compute the associated statistics $T_j^{(k)}= T(\Yb, {\Xb}_{-j}, \tilde\xb_j^{(k)})$ for $k \in [K]$. 
\item[(3)] Take $p_j = [1 + \sum_{k = 1}^K \{ T_j^{(k)} \le T_j \} ] / (1 + K)$ as the proportion of $\{ T_j^{(k)} \}_{k \in [K]}$ that are smaller than or equal to $T_j$.
\end{itemize}
Under the null hypothesis $H_{0j}$, since $(\Yb,\Xb_{-j},\tilde \xb_j^{(k)})$ and $(\Yb,\Xb_{-j},\xb_j)$ are identically distributed, $p_j$ follows the uniform distribution over $\{1/(K+1),2/(K+1),...,1\}$, confirming it as a valid p-value.

% Since $(\Yb,\Xb_{-j},\tilde \xb_j^{(k)})\overset{d}{=}(\Yb,\Xb_{-j},\xb_j)$ under the null hypothesis $H_{0j}$, $p_j$ follows the uniform distribution over $\{1/(K+1),2/(K+1),...,1\}$ and hence is a valid p-value. 

One potential concern is that the {\CRT} p-values $\{ p_j \}_{j \in [d]}$ are not necessarily independent. Consequently, the application of the Benjamini-Hochberg (\BH) procedure on $\{ p_j \}_{j \in [d]}$ may not guarantee control over the {\FDR} from finite samples. Another limitation of {\CRT} is its computational burden. The procedure necessitates computing the base statistics function, $T$, a total of $K \times d$ times. This can be computationally intensive when $T$ represents a complicated statistic, such as the LASSO estimator.

\vskip0.2cm
\noindent
\textbf{Distilled Conditional Randomization Test (\dCRT)} \cite{liu2022fast}. The Distilled Conditional Randomization Test (\dCRT) is similar to \CRT, but it alleviates the computational burden by employing a specialized base statistic $T$. This is represented as $T(\Yb, \Xb_{-j},\xb_j)= \tilde T(\Yb, \xb_j, \bd_y, \bd_x)$, where $\bd_y=\bd_y(\Yb, \Xb_{-j})$ and $\bd_x=\bd_x(\Xb_{-j})$ are distilled statistics, encoding the information of $\Xb_{-j}$ contained in $\Yb$ and $\xb_j$ respectively. Given this structure, $T$ depends on $\Xb_{-j}$ only through $\bd_y, \bd_x$, allowing us to reduce the repetitive computations seen in step (2) of {\CRT}. 

For example, consider a scenario where we intend to use the LASSO estimator to derive a base statistic. We can choose $\bd_y \equiv \Xb_{-j} \what\bbeta$, where $\what\bbeta = \what\bbeta_\lambda(\Yb,\Xb_{-j})$ is the LASSO solution for fitting $\Yb$ on $\Xb_{-j}$ with regularization parameter $\lambda$. Concurrently, we let $\bd_x \equiv \E[\xb_j|\Xb_{-j}]$. Then we can choose $T(\Yb,  \Xb_{-j},\xb_j) \equiv |\<\Yb - \bd_y,\xb_{j}- \bd_x\>|$ as the base statistics.

\vskip0.2cm
\noindent
\textbf{Benjamini-Hochberg procedure with e-values (\eBH)} \cite{wang2022false}. {\eBH} is designed for finite-sample {\FDR} control. This method is a variant of the Benjamini-Hochberg (\BH) procedure \cite{benjamini1995controlling}, using e-values as substitutes for p-values. Specifically, a random variable, $e$, is termed a \textit{valid e-value} if the expectation under the null hypothesis satisfies $\E_{H_{0}}[e] \leq 1$. Given $d$ hypotheses and their corresponding $d$ e-values, denoted as $\{ e_j \}_{j \in [d]}$, the {\eBH} procedure rejects the $k$ hypotheses with the largest e-values (ordered from largest to smallest as $\{ e_{(j)} \}_{j \in [d]}$). Here 
\begin{align}
k=\max \left\{m \in\{0,...,d\} :  \frac{d_0}{m e_{(m)}}\leq \alpha \right\},~~~~ d_0 \text{ is the number of true null hypothesis}. 
\end{align}
Contrasting with the {\BH} procedure, which necessitates additional structural assumptions on the p-values (e.g., PRDS) to ensure finite-sample {\FDR} control \cite{benjamini2001control}, {\eBH} consistently controls {\FDR} at level $\alpha$, irrespective of correlation among the e-values \cite{wang2022false}. 

A noteworthy point is that one can use any p-to-e calibrator \cite{vovk2021values,shafer2019language} to convert a valid p-value to a valid e-value, hence providing {\eBH} with great flexibility for valid {\FDR} control. However, applying a naive p-to-e calibrator might result in a power loss, and special treatments are required to make {\eBH} as powerful as the traditional {\BH} procedure.

\subsection{{\PoPCe} and {\PoEdCe} procedures}\label{sec:PoPCe}

% \subsubsection{The {\PoPCe} procedure}
%\sm{Write a detailed description of {\PoPCe} and {\PoEdCe} algorithms: need to be detailed, see Section 4.1 of \url{https://arxiv.org/pdf/2110.04184.pdf}. Need to have: (1) line-to-line description of procedures outside of the procedure box; (2) explanation of hyperparameters. (3) Explanation of key steps, and give reference to papers and equations when necessary. (4) In the algorithms, add colored comments}
%We first describe the {\PoPCe} procedure. 

% Since {\TPoP} only guarantees {\mFDR} control when the Bayesian model is known,  

The {\PoPCe} ({\tt P}osterior {\tt P}robability + {\tt C}onditional randomization test + {\eBH}) procedure employs {\TPoP} as the base statistics, wrapping it using {\CRT} (in the Model-X setting with known $\cL(\bX)$) and {\eBH}. Specifically, we first apply {\CRT} to {\TPoP} to generate p-values, denoted by $\{ p_j\}_{j \in [d]}$. Subsequently, we construct valid e-values from these $p$-values using a carefully chosen p-to-e calibrator. Eventually, we implement the {\eBH} procedure, which controls {\FDR} from finite samples. The full algorithm is presented in Algorithm \ref{alg:popce}. Each step is explained as follows:
\begin{itemize}
\item Line \ref{line_1_1}-\ref{line_1_2} (Compute the p-to-e calibration threshold): We first compute the p-to-e calibration threshold $q=\Psi(t(\alpha-\eps))$. Here, $\Psi$ represents the cumulative distribution function (CDF) of $\PostProb(\tau_\star Z;\Pi,\tau_\star)$ when $Z\sim\cN(0,1)$ (c.f. Eq. (\ref{eqn:posterior_probability_low}) for the definition of $\cP$). This is inherently the asymptotic CDF of the local fdr of a null coordinate in the Bayesian linear model. Furthermore, 
\begin{equation}\label{eqn:definition-t-alpha}
\begin{aligned}
t(\alpha-\eps) \equiv&~  \max\Big\{  s \in [0, 1]: \lim_{d\to \infty, n/d \to \delta}\FDP( \bT_P(\cdot; s) ; \Pi) \le \alpha - \eps \Big\} \\
=&~ \max\{s: \P( \beta_0 = 0 | \PostProb(Y; \Pi, \tau_\star)  < s) \le \alpha - \eps\}
\end{aligned}
\end{equation}
represents the effective truncation threshold of the {\TPoP} procedure, required for calibrating the effective {\FDR} at level $\alpha - \eps$ in the Bayesian linear model. Notice that the second equality above is due to the limiting formula of {\FDP} for {\TPoP}, which is explicitly given in Conjecture \ref{conj:Posterior_probability_curve}. The p-to-e calibration threshold $q$ is used to calculate e-values in Line \ref{line_1_10}. 

%Intuitively speaking, $q$ is an estimate of the proportion of true null hypotheses that we will reject, which is a consistent estimate when the model is correctly specified. Such a choice of $q$ is essential for us to obtain near-optimal power. 

\item Line \ref{line_1_4}-\ref{line_1_9} (Conditional randomization test): We first compute the local fdr $u_j = P_j(\Yb, \Xb) = \P(\beta_{0, j} = 0 | \Yb, \Xb)$ for each coordinate $j \in [d]$. Subsequently, for each coordinate $j \in [d]$, we generate conditionally independent covariates $\{ \tilde \xb_j^{(k)} \}_{k \in [K]}$ given $\Xb_{-j}$ from the conditional distribution $\cL(X_j | \bX_{-j})$. We then compute the corresponding local fdrs denoted by $\{ u_j^{(k)} = P_j(\Yb, \Xb_{-j}, \tilde \xb_j^{(k)}) \}_{k \in [K]}$. Finally, we let $p_j$ be the proportion of $\{ u_j^{(k)} \}_{k \in [K]}$ that are smaller than $u_j$. When the null hypothesis $H_{0j}: Y \perp X_j|\bX_{-j}$ holds, $u_j$ and $u_{j}^{(k)}$ have the same distribution since 
\[
(\Yb,\Xb_{-j},\tilde \xb_j^{(k)})\overset{d}{=}(\Yb,\Xb_{-j},\xb_j).
\]
Hence, $\{ p_j \}_{j \in [d]}$  are valid p-values. %\sm{Here} we will show that $u_j^{(k)}$ has asymptotically the same distribution whether the $j$-th null hypothesis holds or not. 
%We will later show that converting local fdrs $\{ u_j\}_{j \in [d]}$ to p-values $\{ p_j \}_{j \in [d]}$ using the conditional randomization test will not lose power: rejecting local fdrs $\{ u_j \}_{j \in [d]}$ less than $t(\alpha-\eps)$ ({\TPoP}) is asymptotically equivalent to rejecting p-values $\{ p_j \}_{j \in [d]}$ less than $q$. 

\item Line \ref{line_1_10} (Construct e-values): We construct valid e-values $\{ e_j \}_{j \in [d]}$ from p-values $\{ p_j \}_{j \in [d]}$ using the p-to-e calibrator $e_j= \ones\{ p_j\le q \} /q$. Here, $q$ is previously computed in Line \ref{line_1_1}-\ref{line_1_2}.

\item Line \ref{line_1_12} (\eBH): We finally implement {\eBH} on $\{ e_j\}_{j \in [d]}$, which ensures frequentist {\FDR} control at level $\alpha$. Simple algebra demonstrates that {\eBH} is equivalently rejecting the hypotheses $\{j : p_j \le q \}$ if 
\begin{equation}\label{eqn:ebh-condition}
q \pi_0 d /  |\{j: p_j \le q \}| < \alpha,
\end{equation}
and rejecting nothing otherwise. As will be demonstrated later, when the Bayesian linear model is well-specified, the choice of $q$ results in the concentration of $|\{j: p_j \le q \}|/d$, implying that $q \pi_0 d /  |\{j: p_j \le q \}| \approx \alpha - \eps$. Thereby, condition \eqref{eqn:ebh-condition} will be satisfied with high probability, leading {\PoPCe} to reject the hypotheses $\{j : p_j \le q \}$. We will later show that this is further asymptotically equivalent to rejecting the hypotheses $\{j : u_j \le t(\alpha - \eps)\}$. Therefore, {\PoPCe} asymptotically rejects the same hypotheses as {\TPoP}, thereby possessing near-optimal power. 
% Denote $k_1 = |\{j: e_j=1/q\} |$. Note that $\hat k\in\{0,k_1\}$. As for optimality, since in our construction $e_j$ either equals $1/q$ or $0$, it follows that eBH either rejects all hypotheses with $p_j\leq q$ or rejects nothing. When it is the former case, our procedure rejects the same hypotheses as {\TPoP} asymptotically and hence is optimal.   It follows from the definition of $q$  that  $q\pi_0 d/k_1 \approx \FDR \approx \alpha-\eps<\alpha$ with probability converging to one. This implies our procedure rejects no hypothesis (i.e., $\hat k=0$) with probability converging to zero, and therefore is optimal under the Bayesian linear model. 

\item Hyperparameters ($K,\eps$): We remark that the choice of $(K, \eps)$ will not affect the validity of {\PoPCe}: irrespective of the chosen hyperparameters, {\PoPCe} ensures finite-sample {\FDR} control. However, this choice does impact the asymptotic optimality. In particular, $K$ is the number of times to re-sample the base statistics in {\CRT}. A larger $K$ brings the p-values closer to the uniform distribution under the null hypothesis, albeit at a higher computational cost. Furthermore, in Line \ref{line_1_1}, we choose $\eps>0$ to be a small number to calibrate {\FDP} closer to $\alpha$ and thereby attain a better power. Nevertheless, we do not want $\eps$ to be excessively small, ensuring that condition \eqref{eqn:ebh-condition} happens with high probability. 

%$\eps>0$ is some small quantity to ensure $\hat k \neq 0$ in Line \ref{line_1_12} with probability converging to one, and the {\mTPR} of {\PoPCe} is close to $\CPoP$. {\PoPCe} with a smaller $\eps$ has {\mTPR} closer to the optimal {\mTPR}  asymptotically, but is more likely to reject nothing under finite sample (i.e., the convergence to limiting {\mTPR} could be slower).
\end{itemize}

\begin{algorithm}
  \caption{The {\PoPCe} procedure}
   \label{alg:popce}
   \begin{algorithmic}[1]
      \REQUIRE $\{ (\bx_i, y_i) \}_{i \in [n]} = (\Xb,\Yb)$; {\FDR} level $\alpha \in (0, 1)$; distribution $\P_{\bX}$;  null proportion $\pi_0$; prior $\Pi$ and noise level $\sigma^2$; hyperparameters $K \in \N$, and $\eps > 0$. 
      \STATE\COMMENT {\blue {Compute the p-to-e calibration threshold}}
      \STATE \label{line_1_1} Compute $\tau_\star^2$ which solves Eq.~\eqref{eqn:potential_effective_noise} with prior $\Pi$ and noise level $\sigma^2$. Compute $t = \max\{  s \in [0, 1]: \lim_{d\to \infty, n/d \to \delta}\FDP( \bT_P(\cdot;s) ; \Pi) \le \alpha - \eps \}$ (c.f. Eq. \eqref{eqn:definition-t-alpha}). 
      \STATE \label{line_1_2} Compute $q=\Psi(t)$ where $\Psi$ is the CDF of $\PostProb(\tau_\star Z;\Pi,\tau_\star)$ when $Z\sim\cN(0,1)$.
      \FOR{$j \in [d]$}
     \STATE \label{line_1_5} \COMMENT {\blue {Conditional randomization test}}
      \STATE \label{line_1_4} Denote $P_j(\Yb, \Xb) = \P(\beta_{0, j}=0\vert\Yb,\Xb)$. Compute $u_j = P_j(\Yb, \Xb)$. 
      \FOR{$k \in [K]$}
      \STATE Sample $\tilde \xb_j^{(k)} = (\tilde x_{1j}^{(k)}, \ldots ,\tilde x_{nj}^{(k)})^\sT$ where $\tilde x_{ij}^{(k)} \sim \cL(X_j \vert  \bX_{-j} = \bx_{i, -j})$ independently. 
      \STATE Compute $u_j ^{(k)}= P_j(\Yb,\Xb_{-j},\tilde \xb_j^{(k)})$. 
      \ENDFOR\label{line_1_8}
      \STATE \label{line_1_9}Compute $p_j = (1/(K + 1)) ( 1 + \sum_{k = 1}^K \ones\{ u_j \ge u_j^{(k)} \})$. 
       \STATE\COMMENT {\blue {Compute e-values}}
      \STATE \label{line_1_10}Compute $e_j = \ones\{ p_j \le q \} / q$. 
      \ENDFOR
      \STATE\COMMENT {\blue {eBH}}
      \STATE \label{line_1_12}Reject the hypotheses with the $\hat k$ largest e-values, where 
	\[
	\hat k = \max \Big\{ k: \frac{\pi_0 d}{k e_{(k)}} \le \alpha \Big\}. 
	\]
   \end{algorithmic}
\end{algorithm}

\paragraph{Computational cost}

Line \ref{line_1_4}-\ref{line_1_9} of Algorithm \ref{alg:popce} necessitate the computation of local fdrs of the high dimensional Bayesian linear model. This process can be computationally demanding if Markov Chain Monte Carlo is utilized. In our numerical implementation, we opt to calculate local fdrs using the approximate message passing (AMP) algorithm \cite{donoho2009message} with subsequent post-processing. The AMP algorithm has been demonstrated to converge to the true posterior in several models \cite{mezard2009information,bayati2011dynamics,deshpande2015asymptotic,barbier2019optimal}. However, it is also worth noting that there are statistical models in which AMP does not reach the true posterior \cite{barbier2019optimal}. Despite this, the potential non-convergence of the AMP algorithm should not raise concern for the following reasons: (1) the finite-sample control of {\FDR} is valid for any base statistics and is thus applicable even if AMP does not converge to the correct posterior; (2) in our simulation configurations, AMP does indeed converge to the correct posterior, thereby achieving optimal power in these models.

%With this modification, we propose the {\PoEdCe} (Posterior Expectation + distilled conditional randomization test + eBH)  procedure (Algorithm \ref{alg:poedce}). The full algorithm for {\PoEdCe} is presented in Algorithm \ref{alg:poedce}. 

% In Line \ref{line_2_4}-\ref{line_2_10} we construct $s_j^{(k)} = \< \by - \bX_{-j} \what\bbeta_{-j}, \tilde \xb_j^{(k)}\>$ and let $u_j ^{(k)}= \PostProb( ( \tau^2 / \sigma^2) s_j^{(k)}; \Pi, \tau)$, where $\tilde\xb_j^{(k)}\in\R^{n},k=1,...,K$ are independent samples  of $\bx_j|\bX_{-j}$. ($s_j,u_j$ are constructed similarly.) 

%However, {\PoEdCe} is much more efficient than  {\PoPCe} since we only  compute the posterior expectation $\what \beta_{-j}$ for each coordinate once.

\paragraph{The {\PoEdCe} procedure}

In {\PoPCe}, the local fdr (posterior probability) needs to be computed for $(K+1) \times d$ times. To alleviate the computational burden, we propose a similar procedure {\PoEdCe} ({\tt P}osterior {\tt E}xpectation +  {\dCRT} + {\eBH}), where {\CRT} is replaced by {\dCRT} \cite{liu2022fast}. {\PoEdCe} only requires a single computation of the posterior expectation for each coordinate, and thereby reducing computational costs by a factor of $K + 1$ (assuming the computational costs for posterior expectation and posterior probability are equal). 

Specifically, we illustrate the difference between {\PoEdCe} and {\PoPCe} in Algorithm \ref{alg:poedce}: the only difference lies in the construction of the base statistics $\{ u_j \}_{j \in [d]}$ and their resampled version $\{ \{ u_j^{(k)}\}_{k \in [K]} \}_{j \in [d]}$ (see Line \ref{line_1_5}-\ref{line_1_8} of Algorithm \ref{alg:popce}). In {\PoEdCe}, the base statistics is taken to be $u_j = \PostProb( ( \tau_\star^2 / \sigma^2) s_j; \Pi, \tau_\star)$ for $s_j = \< \Yb - \Xb_{-j} \what \bbeta_{-j}, \xb_j\>$. Here, $\what \bbeta_{-j}$ represents the posterior expectation of $\btheta_0 \in \R^{d-1}$ given observation $(\Yb, \Xb_{-j})$, presuming the statistical model  $\Yb = \Xb_{-j} \btheta_{0} + \beps \in \R^n$, where $\theta_{0, i}\sim_{i.i.d.} \Pi$ and $\eps_i \sim_{i.i.d.} \cN(0, \sigma^2)$ (see Line \ref{line_2_4}-\ref{line_2_8}). The resampled version of the base statistics possesses a similar form (see Line \ref{line_2_9}-\ref{line_2_10}). Similar to {\PoPCe}, {\PoEdCe} also ensures control over frequentist {\FDR}. We will later demonstrate that the asymptotic distribution of $\{ u_j, \{ u_j^{(k)}\}_{k \in [K]} \}_{j \in [d]}$ in {\PoEdCe} aligns with those in {\PoPCe}, implying that {\PoEdCe} and {\PoPCe} possess approximately equal power.  

Similar to {\PoPCe}, in our numerical simulations, we employ the AMP algorithm to compute the posterior expectation of the Bayesian linear model (see Line \ref{line_2_4} of Algorithm \ref{alg:poedce}). By the same argument, the lack of a convergence guarantee for the AMP algorithm does not compromise the finite-sample control of {\FDR}.

\begin{algorithm}[]
  \caption{The {\PoEdCe} procedure (replacing Line \ref{line_1_5}-\ref{line_1_8} of Algorithm \ref{alg:popce} with the following)}
   \label{alg:poedce}
   \begin{algorithmic}[1]
       % \REQUIRE $\{(\bx_i, y_i)\}_{i \in [n]} = ( \Xb,\Yb)$; {\FDR} level $\alpha \in (0, 1)$; distribution $\P_{\bX}$; null proportion $\pi_0$; prior  $\Pi$ and noise level $\sigma^2$; hyperparameters $K \in \N$, and $\eps > 0$. 
       %  \STATE Line - in Algorithm \ref{alg:popce}
        \STATE\COMMENT {\blue {Distilled conditional randomization test}}
      \STATE \label{line_2_4}Compute $\what \bbeta_{-j}$, the posterior expectation of $\btheta_0 \in \R^{d-1}$ given observation $(\Yb, \Xb_{-j})$, assuming the statistical model $\Yb = \Xb_{-j} \btheta_{0} + \beps \in \R^n$, where $\theta_{0, i}\sim_{i.i.d.} \Pi$ and $\eps_i \sim_{i.i.d.} \cN(0, \sigma^2)$. 
      \STATE Compute $s_j = \< \Yb - \Xb_{-j} \what \bbeta_{-j}, \xb_j\>$. 
      \STATE \label{line_2_8} Compute $u_j = \PostProb( ( \tau_\star^2 / \sigma^2) s_j; \Pi, \tau_\star)$. 
      \FOR{$k \in [K]$}
      \STATE \label{line_2_9} Sample $\tilde \xb_j^{(k)} = (\tilde x_{1j}^{(k)}, \ldots ,\tilde x_{nj}^{(k)})^\sT$ where $\tilde x_{ij}^{(k)} \sim \cL(X_j \vert  \bX_{-j} = \bx_{i, -j})$ independently. 
      \STATE Compute $s_j^{(k)} = \< \Yb - \Xb_{-j} \what \bbeta_{-j}, \tilde \xb_j^{(k)}\>$. 
      \STATE \label{line_2_10}Compute $u_j ^{(k)}= \PostProb( ( \tau_\star^2 / \sigma^2) s_j^{(k)}; \Pi, \tau_\star)$. 
      \ENDFOR
      % \STATE Line - in Algorithm \ref{alg:popce}
   \end{algorithmic}
\end{algorithm}

\paragraph{Empirical Bayes for estimating the prior}

The implementation of {\PoPCe} and {\PoEdCe} presumes knowledge of the prior distribution $\Pi$ and the noise level $\sigma^2$. We also consider a situation where both the prior distribution $\Pi$ and the noise level $\sigma^2$ are unknown and, in response, propose an Empirical Bayes variant of {\PoEdCe}, named {\EPoEdCe}. We demonstrate that {\EPoEdCe} also controls {\FDR} from finite samples, and attains near-optimal power whenever the data are generated from a Bayesian linear model with unknown prior and noise levels. Detailed discussions about {\EPoEdCe} can be found in Appendix \ref{sec:EPoEdCe}.

\subsection{Frequentist validity and statistical optimality}\label{sec:FDR_optimality}

It is guaranteed that {\PoPCe} and {\PoEdCe} will control the frequentist {\FDR} from finite samples, as stated in Theorem \ref{thm:frequentist_FDR_control} below. The frequentist {\FDR} control is ensured by the {\eBH} procedure in Line \ref{line_1_12} of Algorithm \ref{alg:popce}, and the validity of the $p$-values of {\CRT} obtained in Line \ref{line_1_9} (see Appendix \ref{sec:freq_proof} for the detailed proof). 

\begin{theorem}[Frequentist {\FDR} control of {\PoPCe} and {\PoEdCe}]\label{thm:frequentist_FDR_control}
For any joint distribution $P\in\cM(P_{\bX})$ (c.f. Eq. \eqref{eqn:M-P-X}), suppose that $\{(\bx_i, y_i)\}_{i \in [n]}$ are i.i.d. from $P$. Let $\bT_\star$  be either {\PoPCe} or {\PoEdCe}. Then we have the frequentist {\FDR} control (c.f. Eq. \eqref{eqn:frequentist-fdr}) 
\[
\FDR(\bT_\star, P) \le \alpha.
\] 
\end{theorem}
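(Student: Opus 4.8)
The plan is to prove Theorem~\ref{thm:frequentist_FDR_control} by verifying, one at a time, the three building blocks of Algorithm~\ref{alg:popce} (and of its $\PoEdCe$ variant, Algorithm~\ref{alg:poedce}): validity of the $\CRT$/$\dCRT$ $p$-values, the $p$-to-$e$ calibration, and the finite-sample FDR guarantee of $\eBH$. The point I would emphasize from the outset is that none of these steps uses any correctness of the conditional law of $Y$ given $\bX$; the only distributional input is the covariate law $P_\bX$ --- equivalently, all the conditionals $\cL(X_j\mid\bX_{-j})$ --- which is exactly what $P\in\cM(P_\bX)$ supplies. In particular the calibration threshold $q=\Psi(t(\alpha-\eps))$ from Lines~\ref{line_1_1}--\ref{line_1_2}, the effective noise level $\tau_\star$, the cutoff $t(\alpha-\eps)$, and the quantities $\pi_0,\sigma^2$ are fixed deterministic numbers; even if they are ``wrong'' for the true $P$, finite-sample validity is untouched.

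First I would show that, for every $P\in\cM(P_\bX)$ and every $j\in\cH_0(P)$, the $p$-value $p_j$ computed in Line~\ref{line_1_9} satisfies $\P_P(p_j\le s)\le s$ for all $s\in[0,1]$. The mechanism is the standard conditional-exchangeability argument of $\CRT$: under $H_{0j}$ one has $Y\perp X_j\mid\bX_{-j}$, so the law of the column $\xb_j$ given $(\Yb,\Xb_{-j})$ is $\bigotimes_{i\in[n]}\cL(X_j\mid\bX_{-j}=\bx_{i,-j})$, which is exactly the distribution from which each resampled column $\tilde\xb_j^{(k)}$ is drawn; hence $(\xb_j,\tilde\xb_j^{(1)},\dots,\tilde\xb_j^{(K)})$ are exchangeable conditionally on $(\Yb,\Xb_{-j})$. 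The one place that needs care is to check that the base statistics are ``Model-X admissible'', i.e.\ that $u_j$ and each $u_j^{(k)}$ are a common fixed measurable function of $(\Yb,\Xb_{-j})$ evaluated on $\xb_j$, respectively $\tilde\xb_j^{(k)}$: for $\PoPCe$ this is immediate from the definition of the local fdr, and for $\PoEdCe$ it follows once one observes that $\what\bbeta_{-j}$ in Line~\ref{line_2_4} is a function of $(\Yb,\Xb_{-j})$ and deterministic inputs only, so $s_j$ and the $s_j^{(k)}$ involve the $j$-th column purely as a fresh argument. Admissibility propagates exchangeability of the columns to exchangeability of $(u_j,u_j^{(1)},\dots,u_j^{(K)})$, and the rank formula for $p_j$ then delivers super-uniformity.

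Next, since $x\mapsto\ones\{x\le q\}/q$ is a valid $p$-to-$e$ calibrator and $q\in(0,1]$ is a deterministic constant, for each null $j$ we get $\E_P[e_j]=\P_P(p_j\le q)/q\le 1$ from the previous step; hence $\{e_j\}_{j\in[d]}$ are valid $e$-values, with no restriction whatsoever on their joint dependence (which is genuinely arbitrary, as the $p_j$ are coupled through the shared data $(\Yb,\Xb)$). I would then invoke the $\eBH$ theorem of \cite{wang2022false}, which controls the FDR at the nominal level for arbitrarily dependent valid $e$-values. Concretely one can also argue by hand: writing $R=\{j:p_j\le q\}$, Line~\ref{line_1_12} rejects $R$ only on the event $\{|R|\ge q\pi_0 d/\alpha\}$, on which $1/|R|\le\alpha/(q\pi_0 d)$, so
\[
\FDR(\bT_\star,P)\ \le\ \frac{\alpha}{q\pi_0 d}\sum_{j\in\cH_0(P)}\P_P(p_j\le q)\ \le\ \frac{\alpha\,|\cH_0(P)|\,q}{q\pi_0 d}\ =\ \frac{\alpha\,|\cH_0(P)|}{\pi_0 d}\ \le\ \alpha ,
\]
the last inequality using that $\pi_0 d$ is (chosen as) an upper bound on the true null count --- if one wants the conclusion for literally every $P\in\cM(P_\bX)$, the cleanest fix is to use the budget $d$ rather than $\pi_0 d$ in Line~\ref{line_1_12}, at the cost of a factor $\pi_0$ in the bound.

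I do not expect a deep obstacle: all three steps are, individually, standard. The substantive bookkeeping, and the part I would write out most carefully, is the admissibility check in Step~1 for $\PoEdCe$ --- making fully precise that the distilled statistic $u_j=\PostProb((\tau_\star^2/\sigma^2)s_j;\Pi,\tau_\star)$ and its resampled copies are produced by a single deterministic map applied to $(\Yb,\Xb_{-j})$ and then to the various columns, so that conditional exchangeability under $H_{0j}$ is not silently broken by the posterior (or AMP) computation of $\what\bbeta_{-j}$. The complementary point worth stating explicitly is the one in the displayed bound above: $\eBH$ remains valid under a deterministic, possibly conservative, choice of null-count input, and the hyperparameters $K,\eps$ as well as any inaccuracy of the AMP posterior affect only the (separately analyzed) power, never the FDR.
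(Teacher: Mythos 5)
Your proposal is correct and follows essentially the same route as the paper's own proof: conditional exchangeability of $(\xb_j,\tilde\xb_j^{(1)},\dots,\tilde\xb_j^{(K)})$ given $(\Yb,\Xb_{-j})$ under $H_{0j}$ (with the same key admissibility observation that $\what\bbeta_{-j}$ in {\PoEdCe} is a function of $(\Yb,\Xb_{-j})$ alone), hence super-uniform $p_j$, then validity of the calibrated e-values $e_j=\ones\{p_j\le q\}/q$, and finally the {\eBH} guarantee of \cite{wang2022false} for arbitrarily dependent e-values. The only place you go beyond the paper is your explicit by-hand {\eBH} bound, which correctly surfaces that the rejection rule in Line \ref{line_1_12} of Algorithm \ref{alg:popce} uses $\pi_0 d$ in place of the null count, so the argument yields $\FDR\le \alpha\,|\cH_0(P)|/(\pi_0 d)$; the paper's proof passes over this by citing Theorem 5.1 of \cite{wang2022false} directly, and your remark that the conclusion for \emph{every} $P\in\cM(P_\bX)$ tacitly requires $\pi_0 d\ge |\cH_0(P)|$ (or the conservative replacement of $\pi_0 d$ by $d$, at the cost of a factor $\pi_0$ in power) is a legitimate caveat about the theorem as stated rather than a gap in your argument.
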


%Next, we show that  {\PoPCe} and {\PoEdCe} control the frequentist FDR and have asymptotically optimal {\mTPR}.
% The design of {\PoEdCe} ensures its frequentist {\FDR} control, as stated in Theorem \ref{thm:frequentist_FDR_control_poedce}. Its proof is also straightforward by showing that each $p$-value obtained in Algorithm \ref{alg:poedce} is valid (see Appendix \ref{sec:freq_proof} for the detailed proof).
% \begin{theorem}[Frequentist {\FDR} control of {\PoEdCe}]\label{thm:frequentist_FDR_control_poedce}
% For any joint distribution $P\in\cM(P_{\bX})$ (c.f. Eq. \eqref{eqn:M-P-X}), suppose that $\{(\bx_i, y_i)\}_{i \in [n]}$ are i.i.d. from $P$, then the {\PoEdCe} procedure $\bT_\star$ (Algorithm \ref{alg:poedce}) controls the frequentist {\FDR} (c.f. Eq. \eqref{eqn:frequentist-fdr}),
% \[
% \FDR(\bT_\star, P) \le \alpha. 
% \] 
% \end{theorem}
% The proof of Theorem \ref{thm:frequentist_FDR_control} is straightforward after showing each $p$-values obtained in both procedures are valid. See Appendix \ref{sec:freq_proof} for a detailed proof. 

%We show asymptotic FDR optimality of two procedures in two steps. First we prove that they have frequentist FDR control under any joint distribution $P\in \cM(P_\bX)$ (c.f. Eq. (\ref{eqn:M-P-X})). Then

We subsequently introduce a conjecture proposing that {\PoPCe} and {\PoEdCe} are asymptotically optimal procedures with frequentist {\FDR} control (c.f. Definition \ref{def:optimal-frequentist-fdr}).  We will present a heuristic argument supporting this conjecture in Appendix \ref{sec:PoPCe_PoEdCe_power}, and we will validate the conjecture numerically in Section \ref{sec:numeric-frequentist}. %showing that when the model is well-specified, the asymptotic power of {\PoPCe} and {\PoEdCe} is the same as the power of {\TPoP} and {\CPoP}, when {\FDR} level is $\alpha$. 

\begin{conjecture}[Optimality of {\PoPCe} and {\PoEdCe}]\label{conj:PoPCe_PoEdCe_power}
In the asymptotic regime where $n,d \to \infty$, $n / d \to \delta$, $K = K_n \to \infty$, and $\eps = \eps_n \to 0$, and under the conditions of the Bayesian linear model as per Assumption \ref{ass:Bayesian_linear_model}, both {\PoPCe} and {\PoEdCe} have the same asymptotic power as {\CPoP} (for $\bT_\star$ to be either {\PoPCe} or {\PoEdCe})
\begin{equation}\label{eqn:mTPR-T-C}
\lim_{n \to \infty}\frac{1}{d} {\mTPR(\bT_\star, \Pi)}=\lim_{n \to \infty}\frac{1}{d} {\mTPR(\bC_P( \cdot;\lambda(\alpha)), \Pi)} .
\end{equation}
Subsequently, as per Proposition \ref{prop:Bayes_optimality}, both procedures are asymptotically {\BFDR} optimal (c.f. Definition \ref{def:BFDR-optimality}):
\[
\lim_{n \to \infty}\frac{1}{d} {\mTPR(\bT_\star, \Pi)}\geq \lim_{n \to \infty} \frac{1}{d}\max_\bT\Big\{ \mTPR(\bT, \Pi): \BFDR(\bT, \Pi) \le \alpha \Big\}.
\]
Consequently, according to Lemma \ref{lem:bound}, {\PoPCe} and {\PoEdCe} are both asymptotically $(\alpha, \cM(P_\bX), \Pi, o_n(1))$-optimal procedures with frequentist {\FDR} control (c.f. Definition \ref{def:optimal-frequentist-fdr}). 
\end{conjecture}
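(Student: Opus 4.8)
\textbf{Proof proposal for Conjecture~\ref{conj:PoPCe_PoEdCe_power}.} The plan is to reduce the statement to an analysis of \TPoP{}. Concretely, I would show that in the regime $n,d\to\infty$, $n/d\to\delta$, $K=K_n\to\infty$, $\eps=\eps_n\to0$, both \PoPCe{} and \PoEdCe{} asymptotically reject exactly the set $\{j:u_j\le t(\alpha-\eps)\}$, i.e.\ the same hypotheses as $\bT_P(\cdot\,;t(\alpha-\eps))$ in \eqref{eqn:def_T_P}. Granting this, Conjecture~\ref{conj:Posterior_probability_curve} gives limiting $\FDP=\P(\beta_0=0\mid\Phi<t(\alpha-\eps))\le\alpha-\eps$ and limiting $\TPP=\P(\Phi<t(\alpha-\eps)\mid\beta_0\ne0)$; letting $\eps_n\to0$ and using continuity of $s\mapsto\P(\beta_0=0\mid\Phi<s)$ gives $t(\alpha-\eps_n)\to t(\alpha)$, so the limiting $\mTPR$ (equal to $\TPP$ by concentration) is $\P(\Phi<t(\alpha)\mid\beta_0\ne0)$. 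Since \CPoP{} with $\lambda(\alpha)$ has, by Conjecture~\ref{conj:Posterior_probability_curve}, limiting $\FDP=\P(\beta_0=0\mid\Phi<t_\star(\lambda(\alpha)))=\alpha$ and limiting $\TPP=\P(\Phi<t_\star(\lambda(\alpha))\mid\beta_0\ne0)$, strict monotonicity in $s$ of $\P(\beta_0=0\mid\Phi<s)$ forces $t_\star(\lambda(\alpha))=t(\alpha)$, so the two limiting powers coincide; the remaining displays in the conjecture then follow from Proposition~\ref{prop:Bayes_optimality} and Lemma~\ref{lem:bound}.

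The core of the reduction is the conditional randomization step. Conditionally on $(\Yb,\Xb_{-j})$ the resampled covariate $\tilde\xb_j^{(k)}\sim\cL(X_j\mid\bX_{-j})$ is independent of $\Yb$, so the augmented data $(\Yb,\Xb_{-j},\tilde\xb_j^{(k)})$ is distributed as a Bayesian linear model whose $j$-th coordinate is null; feeding this into the replica heuristic \eqref{eqn:replica-main-test-result} shows the resampled local fdr $u_j^{(k)}$ has limiting conditional c.d.f.\ $\Psi$, the c.d.f.\ of $\PostProb(\tau_\star Z;\Pi,\tau_\star)$ with $Z\sim\cN(0,1)$ (c.f.\ \eqref{eqn:posterior_probability_low}). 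Hence, as $K_n\to\infty$, $p_j=(1+\sum_k\ones\{u_j\ge u_j^{(k)}\})/(K+1)\to\Psi(u_j)$, so the event $\{p_j\le q\}$ with $q=\Psi(t(\alpha-\eps))$ coincides asymptotically with $\{u_j\le t(\alpha-\eps)\}$ by monotonicity of $\Psi$. A law of large numbers over the coordinates---which requires a propagation-of-chaos statement for the joint empirical distribution of $\{(\beta_{0,j},u_j,p_j)\}_{j\in[d]}$, not merely the marginal replica formula---then yields $|\{j:p_j\le q\}|/d\to\P(\Phi<t(\alpha-\eps))$, so $q\pi_0 d/|\{j:p_j\le q\}|\to\alpha-\eps<\alpha$; therefore the \eBH{} condition \eqref{eqn:ebh-condition} holds with probability tending to one and \PoPCe{} rejects $\{j:p_j\le q\}=\{j:u_j\le t(\alpha-\eps)\}$.

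For \PoEdCe{} only the base statistics change, so the extra step is to show that $u_j=\PostProb((\tau_\star^2/\sigma^2)s_j;\Pi,\tau_\star)$ with $s_j=\langle\Yb-\Xb_{-j}\what\bbeta_{-j},\xb_j\rangle$ has the same limiting joint law with $\beta_{0,j}$ as the posterior probability used in \PoPCe{}. This is where the approximate message passing state evolution for the leave-one-out model $\Yb=\Xb_{-j}\btheta_0+\beps$ is used: the residual $\Yb-\Xb_{-j}\what\bbeta_{-j}$ is asymptotically an effective-noise vector that, jointly with $(\beta_{0,j},\xb_j)$, makes $(\tau_\star^2/\sigma^2)s_j$ behave like the scalar-channel output $\beta_{0,j}+\tau_\star G$ of \eqref{eqn:scalar_model}---the scaling $\tau_\star^2/\sigma^2$ and the self-consistent noise $\tau_\star$ of \eqref{eqn:self_consistent} being exactly what makes this identification hold in the limit---so $u_j$ is asymptotically distributed as $\Phi$. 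The resampled $s_j^{(k)}=\langle\Yb-\Xb_{-j}\what\bbeta_{-j},\tilde\xb_j^{(k)}\rangle$ is handled the same way with $\beta_{0,j}$ replaced by $0$, giving $u_j^{(k)}$ limiting c.d.f.\ $\Psi$; the remainder of the argument is identical to the \PoPCe{} case.

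The main obstacle is that every step rests on heuristic replica/AMP characterizations that are not yet theorems in this setting---the formula \eqref{eqn:replica-main-test-result} for the joint empirical distribution of $\{(\beta_{0,j},P_j(\cD))\}$ and its leave-one-out and distilled analogues. Beyond the marginal statements, the argument genuinely needs a \emph{joint} control across the $d$ coordinates (an asymptotic-independence / propagation-of-chaos statement) to justify the law of large numbers for $|\{j:p_j\le q\}|/d$ and to handle the $d$ simultaneous conditional randomization tests; this is substantially more delicate than a single-coordinate replica computation. One must also justify interchanging the limits $n\to\infty$, $K_n\to\infty$, $\eps_n\to0$, and, in the comparison with \CPoP{}, rule out a flat segment of the $\FDP$--$\TPP$ curve at level $\alpha$, which amounts to strict monotonicity of $s\mapsto\P(\beta_0=0\mid\Phi<s)$ near $t(\alpha)$. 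I expect the joint, uniform-in-$j$ control to be the hardest single ingredient, since even the marginal replica formula \eqref{eqn:replica-main-test-result} is currently only a conjecture.
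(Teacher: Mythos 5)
Your proposal matches the paper's own (necessarily heuristic) justification of this conjecture in essentially every step: CRT/dCRT p-values asymptotically equal $\Psi$ of the base statistic (the paper's Formalisms \ref{fm:pvalue_permu} and \ref{fm:poedce2}), the {\eBH} condition \eqref{eqn:ebh-condition} concentrates at $\alpha-\eps$ so {\PoPCe}/{\PoEdCe} asymptotically reject the same set as {\TPoP} at threshold $t(\alpha-\eps)$, Conjecture~\ref{conj:Posterior_probability_curve} then identifies the limiting power with that of {\CPoP}, and Proposition~\ref{prop:Bayes_optimality} plus Lemma~\ref{lem:bound} finish; you also flag the same missing ingredients (joint/propagation-of-chaos control across coordinates, exchange of the $n$, $K_n$, $\eps_n$ limits) that the paper leaves open. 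The only minor difference is that for the distilled statistic you invoke AMP state evolution, whereas the paper argues via a leave-one-out/replica formalism (Formalism~\ref{fm:distill_s}), but both yield the same effective scalar-channel identification $s_j \approx (\sigma^2/\tau_\star^2)\beta_{0,j} + (\sigma^2/\tau_\star)Z$.
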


%\sm{Expand here. Giving more intuitions. }

% We present the intuitions of Conjecture \ref{conj:PoPCe_PoEdCe_power} in Appendix \ref{sec:PoPCe_PoEdCe_power}. 
%This Conjecture claims that both {\PoPCe} and {\PoEdCe} are FDR-optimal, meaning that they are procedures that control the {\FDR} below level $\alpha$ and achieve the maximal power under a known Bayesian linear model. 

Generally speaking, the conjecture builds on the intuition that the {\PoPCe} procedure (as well as {\PoEdCe}) will, in an asymptotic sense, reject the same set of hypotheses as {\CPoP}, when the model is well-specified. This intuition stems from the derivation of the asymptotic empirical distribution of local fdrs $\{ P_j(\cD) \}$ and $p$-values $\{ p_j \}$ in {\PoPCe} under the Bayesian linear model (c.f. Formalism \ref{fm:post_mean_zero}, \ref{fm:pvalue_permu} and \ref{fm:poedce2}). More specifically, in the limit as $n, p \to \infty$, we can ``marginally" treat
\[
\begin{aligned}
(\beta_{0, j}, P_j(\cD)) \stackrel{\cdot}{\sim}&~ (\Pi, \cP(\Pi+\tau_\star Z)), \\
(\beta_{0,j},p_j) \stackrel{\cdot}{\sim}&~ (\Pi,\Psi(\cP(\Pi +\tau_\star Z))),
\end{aligned}
\]
and we use a dot above the $\sim$ symbol to indicate that the approximation holds only in a restricted sense. Above, $Z \sim \cN(0,1)$, $\tau_\star$ is the constant determined through (\ref{eqn:potential_effective_noise}), $\cP(\, \cdot \,) = \PostProb(\, \cdot \,; \Pi, \tau_\star)$ is given by (\ref{eqn:posterior_probability_low}), and $\Psi(t) = \P_{Z \sim \cN(0, 1)}(\PostProb(\tau_\star Z) \le t)$ is the CDF of $\PostProb(\tau_\star Z)$, a strictly increasing function. 

Given that {\CPoP} and {\PoPCe} (as well as {\PoEdCe}), respectively, threshold $\{ P_j(\cD) \}$ and  $\{ p_j \}$ at some levels that are calibrated to control the asymptotic {\BFDR} level $\alpha$, the monotonicity of $\Psi$ dictates that both procedures asymptotically reject lower values of $\{P_j(\cD)\}$ at the same level, hence rejecting the same set of hypotheses (c.f. Appendix \ref{sec:PoPCe_PoEdCe_power} for details).

%\sm{1. The CRT p-values and the local fdrs (and ground truth) have asymptotically the same empirical distribution, up to a transformation (give equations); 2. The threshold is correctly calibrated. } %More specifically, in these two procedures we also have $(\beta_{0,j},p_j)\overset{\cdot}{\sim}(\Pi,\PostProb(\Pi+\tau Z;\Pi,\tau))$, which means $p_j$ are essentially the same as local fdr. Therefore,  by choosing a proper threshold we are able to  recover {\TPoP} and achieve optimality. 
%In addition, the reason to inverse $p_j$ and use e-values is to ensure finite sample FDR control. \sm{Add related work}

%Now, it follows immediately from Theorem~\ref{thm:frequentist_FDR_control} and Conjecture \ref{conj:PoPCe_PoEdCe_power} that {\PoPCe} and {\PoEdCe} are asymptotically $(\alpha, \mcP, \Pi,0)$-{\FDR} optimal with $\mcP$ being the set of all joint distribution of $X$ and $y$. 

\subsection{Numerical simulations}\label{sec:numeric-frequentist}

We perform numerical simulations, comparing the predicted and simulated {\FDP} and {\TPP} of {\PoPCe}, {\PoEdCe}, and {\EPoEdCe}. We first focus on well-specified Bayesian linear models and demonstrate that the simulated curves align with analytical predictions. Following this, we examine misspecified models to verify that these procedures maintain control over the frequentist FDR from finite samples. 

\subsubsection{{\FDP} and {\TPP} in well-specified models}

\begin{figure}\centering
\includegraphics[width=1\textwidth]{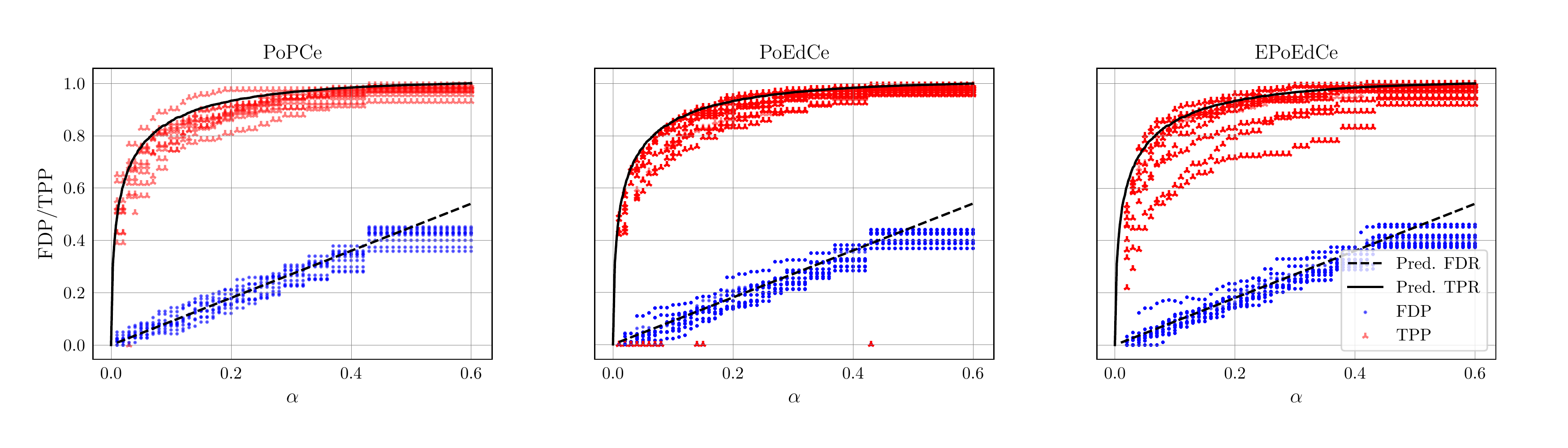}
\vskip-0.3cm
\caption{{\FDP, \TPP} versus $\alpha$ for {\PoPCe}, {\PoEdCe} and {\EPoEdCe}. Blue and red dots are $(\FDP,\alpha)$ and $(\TPP,\alpha)$. The real black curve is the analytical prediction of {\FDP-\TPP}, and the dashed black line is $y=0.9\alpha$, the desired {\FDR} level. For all testing procedures, we generate $10$ simulated instances with $\delta=1.25$, $n=500$, $\sigma=0.25$, and $\Pi=0.6\cdot\delta_0 +0.2\delta_1+0.2\delta_{-1}$. }
\label{fig:popce_epoedce}
\end{figure}

Figure \ref{fig:popce_epoedce} displays the realized {\FDP} and {\TPP} of {\PoPCe}, {\PoEdCe}, and {\EPoEdCe} against the nominal level $\alpha$, using data generated from a well-specified Bayesian linear model. We simulate $10$ instances of $(\Yb,\Xb)$ with $n=500$, $d=400$, $\delta=1.25$, $\sigma=0.25$, and prior distribution $\Pi=0.6\cdot\delta_0 +0.2\delta_1+0.2\delta_{-1}$. For each simulated instance, we apply each {\PoPCe}, {\PoEdCe} and {\EPoEdCe} across various {\FDR} control levels $\alpha$ from $0$ to $0.6$. For each $\alpha$, we select $\eps =0.1\alpha$, expecting that the {\FDP} will concentrate at level $\alpha-\eps=0.9\alpha$. We set the number of repetitions $K=1000$ for the {\CRT} sub-routine. For {\EPoEdCe}, we choose $M=50$, the number of blocks in the empirical Bayes procedure. The realized {\FDP} and {\TPP} against the level $\alpha$ for all three procedures are then plotted. The analytical prediction curve of {\TPP} and the line $y=0.9\alpha$ serving as the analytical prediction curve of {\FDP} are also included. As demonstrated in Figure \ref{fig:popce_epoedce}, the {\TPP} tightly concentrates around the optimal {\TPR} level, while the {\FDP} closely adheres to the predicted {\FDR} level of $0.9\alpha$, for all three procedures. 

\subsubsection{{\FDP} and {\TPP} of {\PoEdCe} with well-specified and misspecified models}

% \begin{figure}\centering

% \caption{Realized {\TPP} of {\PoEdCe} with the same setup as Figure \ref{fig:poedce_fdp}. 
% }
% \label{fig:poedce_tpp}
% \end{figure}

Figure \ref{fig:poedce_fdp} displays the realized {\FDP} and {\TPP} of {\PoEdCe} against the nominal level $\alpha$ under well-specified and misspecified models. We simulate $10$ instances of $(\Yb,\Xb)$ for $\delta=0.8,1.25$ (corresponding to $n=400,d=500$ and $n=500,d=400$ respectively), setting $\sigma=0.25$ and $\Pi=0.4\cdot\delta_0 + 0.5\cdot\delta_1 + 0.1\cdot\delta_{-1}$. For each simulated instance, we employ the {\PoEdCe} procedure under three different model assumptions (considering three sets of model parameters $(\Pi, \sigma)$ as inputs of {\PoEdCe}): (1) a well-specified model, where  {\PoEdCe} utilizes the true model parameters $(\Pi, \sigma)$ that generate the data; (2) a misspecified model with an incorrect noise level $\sigma =0.5$; (3)  a misspecified model with an inaccurate prior $\Pi=0.6\cdot\delta_0 +0.2\delta_1+0.2\delta_{-1}$. In all scenarios, we select $\eps=0.1\alpha$ and set the repetition number to $K=1000$.

The upper panel demonstrates that {\FDP} is controlled under level $0.9\alpha$, even in instances of model misspecification. It also illustrates that the {\FDP} concentrates around level $0.9\alpha$ when the model is well-specified. The lower panel shows that the {\TPP} concentrates on the analytical prediction when the model is well-specified, thereby aligning with our conjecture. However, in model instances with a misspecified prior or an erroneous noise level, the power of {\PoEdCe} falls below the optimal {\TPP}-{\FDP} tradeoff curve.

% We plot realized {\FDP} against level $\alpha$ in Figure \ref{fig:poedce_fdp}. 
%for three different model assumptions. with desired $\FDR$ control level $0.9\alpha$ against $\alpha$. 

\begin{figure}\centering
\includegraphics[width=0.77\textwidth]{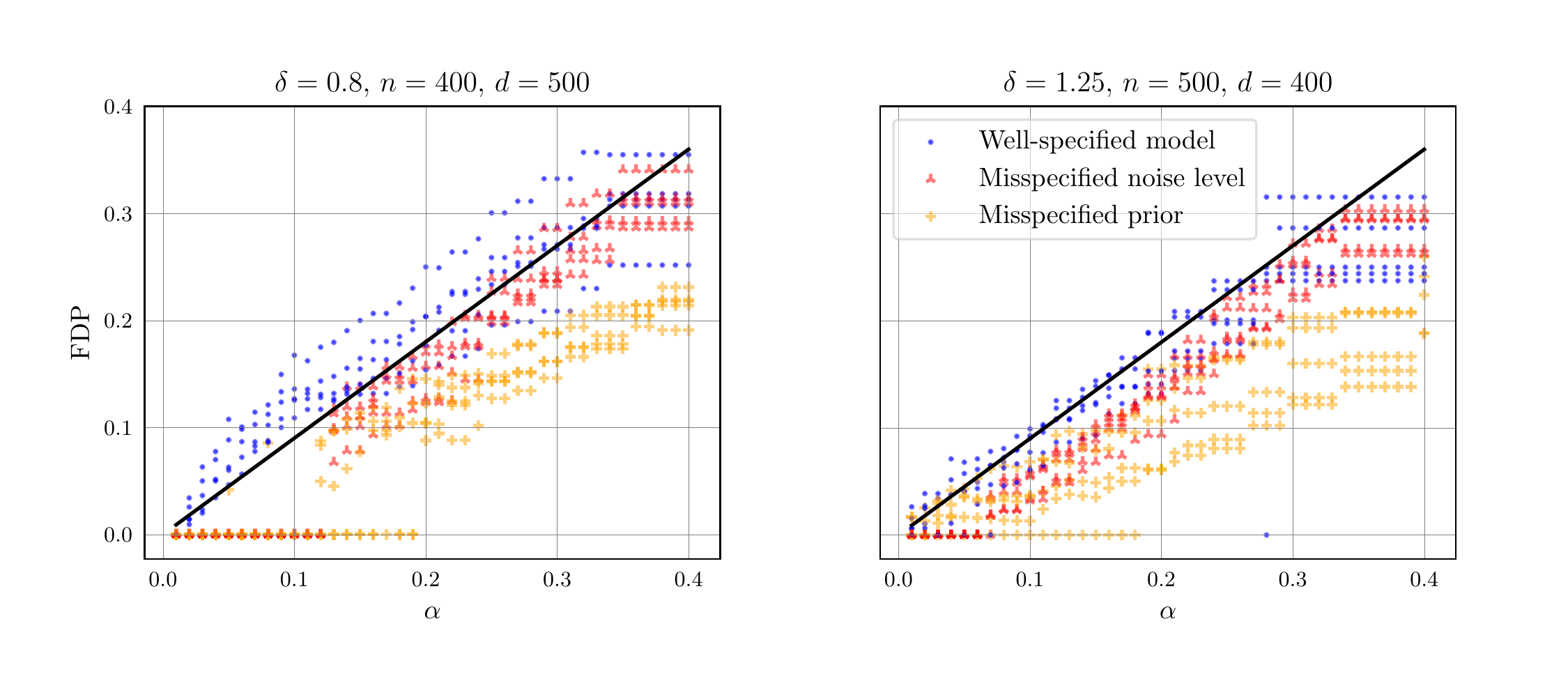}
\vskip-0.5cm
\includegraphics[width=0.77\textwidth]{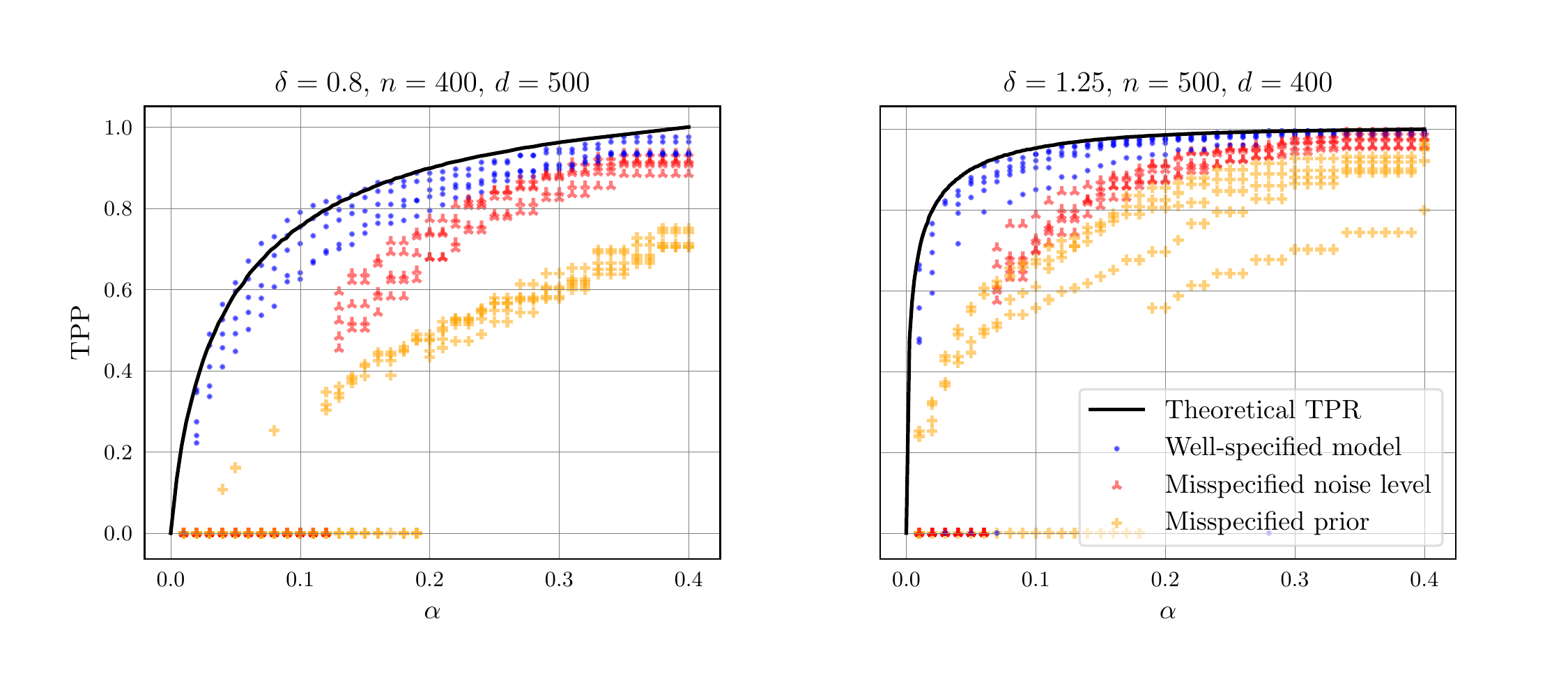}
\vskip-0.3cm
\caption{Realized {\FDP} and {\TPP} of {\PoEdCe} with two $\delta=0.8,1.25$ ($n=500,d=400$ and $n=400,d=500$ respectively). The Bayesian linear model is generated with model parameters $\sigma=0.25$ and $\Pi=0.4\cdot\delta_0 + 0.5\cdot\delta_1 + 0.1\cdot\delta_{-1}$. Hyperparameters are chosen to be $\epsilon=0.1\alpha$ and $K=1000$. The inputs of {\PoEdCe} are three sets of model parameters: well-specified model with correct $(\Pi, \sigma)$, misspecified model with noise level $\sigma=0.5$, and misspecified model with prior $\Pi=0.6\cdot\delta_0 +0.2\delta_1+0.2\delta_{-1}$. Upper panel: Blue, red and yellow dots are $(\FDP,\alpha)$ for different model parameters. The black line is the {\FDR} level $y = 0.9\alpha$. Lower panel: Blue, red and yellow dots are $(\TPP,\alpha)$ for different model parameters. The black curve is the analytical prediction of {\TPP}. }
\label{fig:poedce_fdp}
\end{figure}

\section{Conclusion and discussion}

In this paper, we proposed multiple testing procedures with frequentist {\FDR} control, which are also near-optimal under Bayesian linear models. We begin by calculating an upper bound of power, for any procedure with finite-sample {\FDR} control. This statistical limit is demonstrated as asymptotically achievable by two testing procedures, {\PoPCe} and {\PoEdCe}. These procedures control {\FDR} from finite samples under the Model-X framework and are conjectured to be near-optimal when the Bayesian linear model is well-specified. We provide the intuition behind these conjectures and employ numerical simulations to corroborate the validity and optimality of the proposed procedures. 

Our work establishes the Bayesian linear model as a reference point for power comparisons among various multiple testing procedures (for example, knockoffs \cite{candes2018panning}, mirror statistics \cite{xing2021controlling}, dBH \cite{fithian2020conditional}, etc). In other words, the effectiveness of a multiple testing procedure can be assessed relative to the power of {\PoEdCe} within the Bayesian linear model. On the other hand, we would like to emphasize that while the {\PoPCe} and {\PoEdCe} procedures serve as theoretical tools for the power analysis of {\FDR}-controlling procedures, we do not recommend their direct application in practical scenarios. Indeed, these procedures could possibly be powerless under model misspecification in practice.

This paper presents several important questions for further exploration. Firstly, our optimality conjecture for {\PoPCe} and {\PoEdCe} is based on heuristic calculations, and a significant challenge would be to formally prove this conjecture. An essential step towards this goal is the derivation of the asymptotics for the joint empirical distribution of the parameters and local fdrs. An approach to consider might be the application of advanced Gaussian interpolation techniques, such as those utilized in \cite{barbier2019optimal}. 

Furthermore, an intriguing question is the design of optimal procedures that extend beyond the model assumptions of isotropic Gaussian covariates and the Bayesian linear model. For example, a natural extension is Bayesian generalized linear models with anisotropic Gaussian covariates. Under this assumption, designing procedures with finite-sample {\FDR} control is a straightforward problem; the challenging question, however, is how to achieve near-optimality under well-specified models. 

Finally, we notice that when the model has certain misspecification, {\PoPCe} and {\PoEdCe} are often too conservative and do not reject any hypothesis. This outcome arises because we select the p-to-e calibrator to be a truncation function with an estimated truncation threshold, and {\eBH} might reject nothing if the estimated threshold is inaccurate. A compelling open question, therefore, is whether one can enhance or optimize the power under model misspecification while maintaining finite-sample validity and Bayes optimality. %We believe dependence adjusted {\BH} procedure (dBH) \cite{fithian2020conditional} may be less-conservative when the model is misspecified, but preserve the finite-sample validity and Bayes-optimality. 

\section*{Acknowledgement}

This project is supported by NSF grant DMS-2210827 and CCF-2315725. We thank Will Fithian and his group for helpful discussions.

\bibliographystyle{alpha}
\bibliography{arxiv_v2.bbl}

% \clearpage

% \tableofcontents

\clearpage

\tableofcontents

\appendix

\section{Proof of Proposition \ref{prop:marginal_optimality}}\label{sec:Bayes_optimality_proof}

Throughout the proof of this proposition, all the probability and expectation are with respect to the randomness in $\bbeta_0$ and $\cD$. We first prove the existence of $t = t(\alpha)$ such that Eq. (\ref{eqn:mFDR-alpha}) holds. 

Recall the definition of $\mFDR$ as in Eq. (\ref{eqn:def_mFDR_mTPR}). For any $t\in(0,1)$, note that we can rewrite $\mFDR(\bT_p(\cdot; t),\Pi)$ as
$$\mFDR(\bT_p(\cdot; t),\Pi)=\frac{\sum_{j\in[d]}\P(P_j(\cD)<t,\beta_{0,j}=0)}{\sum_{j\in[d]}\P(P_j(\cD)<t,\beta_{0,j}=0)+\P(P_j(\cD)<t,\beta_{0,j}\ne0)}.$$
By Assumption \ref{ass:marginal_optimality}, $\P(P_j(\cD)<t | \beta_{0,j} = 0)$ and $\P(P_j(\cD)<t | \beta_{0,j} \neq 0)$ are continuous in $t$, so that we have $\P(P_j(\cD)<t,\beta_{0,j} = 0)$ and $\P(P_j(\cD)<t,\beta_{0,j} \neq 0)$ are also continuous in $t$. Thus $\mFDR(\bT_p(\cdot;t),\Pi)$ is also continuous in $t$. Now for any $\alpha\in A = (\inf_t\mFDR(\bT_P( \cdot;t), \Pi),\sup_t\mFDR(\bT_P( \cdot;t), \Pi))$, by intermediate value theorem, there exists $t(\alpha)$ such that $\mFDR(\bT_P(\cdot;t(\alpha)),\Pi)=\alpha$, i.e., Eq. (\ref{eqn:mFDR-alpha}) holds.

Now we prove the second part of the proposition. First we define $p(\cD \vert \beta_{0,j} = 0)$ to be the conditional density of $\cD$ given $\beta_{0,j} = 0$ and $p(\cD \vert \beta_{0,j} \neq 0)$ to be the conditional density of $\cD$ given $\beta_{0,j} \neq 0$ as in Assumption \ref{ass:marginal_optimality}.  By the Bayes formula, we have 
\[
P_j(\cD) = \frac{p(\cD|\beta_{0,j}=0) \P(\beta_{0,j} = 0)}{p(\cD|\beta_{0,j}=0) \P(\beta_{0,j} = 0) + p(\cD|\beta_{0,j} \neq 0) \P(\beta_{0,j} \neq 0)}, 
\]
so that $P_j(\cD) < t$ is equivalent to 
\begin{equation}\label{eqn:equation3_in_proof_prop_1}
\begin{aligned}
p(\cD|\beta_{0,j}=0) \P(\beta_{0,j} = 0) \times (1-t) <  p(\cD|\beta_{0,j} \neq 0) \P(\beta_{0,j} \neq 0) \times  t. 
\end{aligned}
\end{equation}
For any test $\bT: \Omega \to \{0, 1\}^d$, we have
\begin{equation}\label{eqn:equation1_in_proof_prop_1}
\begin{aligned}
t\cdot\E[\TP(\bT)]-(1-t)\cdot\E[\FD(\bT)]=&\sum_{j=1}^d \Big( t \cdot \P(T_j=1,\beta_{0,j}\ne0)-(1-t)\cdot \P(T_j=1,\beta_{0,j}=0)\Big)\\
=& \sum_{j=1}^d  \int_{\Omega} T_j(\cD)\Big( t \cdot p(\cD|\beta_{0,j}\ne0) \P(\beta_{0,j} \neq 0)-(1-t) \cdot p(\cD|\beta_{0,j}=0)  \P(\beta_{0,j} = 0) \Big) \de \cD.
\end{aligned}
\end{equation}
Therefore, by the definition of $\bT_P(\cdot;t)$ as in Eq. (\ref{eqn:def_T_P}) and by Eq. (\ref{eqn:equation3_in_proof_prop_1}), we have that $\bT_P(\cdot;t)$ maximizes $t\cdot\E[\TP(\bT)]-(1-t)\cdot\E[\FD(\bT)]$, i.e., for any $\bT: \Omega \to \{0, 1\}^d$, we have
\begin{equation}\label{eqn:equation4_in_proof_prop_1}
t\cdot\E[\TP(\bT)]-(1-t)\cdot\E[\FD(\bT)] \le t\cdot\E[\TP(\bT_P(\cdot;t))]-(1-t)\cdot\E[\FD(\bT_P(\cdot;t))]. 
\end{equation}

We next show that $t = t (\alpha) > \alpha$. Define $T_*=\E[\TP(\bT_P(\cdot; t(\alpha)))]$ and $F_*=\E[\FD(\bT_P(\cdot; t(\alpha)))]$. Then by the definition of $\bT_P$ as in Eq. (\ref{eqn:def_T_P}) and by Eq. (\ref{eqn:equation1_in_proof_prop_1}), we have
\begin{align}
t\cdot T_* -(1-t)\cdot F_* \ge 0. 
\end{align} 
We first show that $t\cdot T_* -(1-t)\cdot F_* >0 $. We use proof by contradiction. Assume the contrary holds which gives $t\cdot T_* -(1-t)\cdot F_* = 0$. Then the integrand in Eq. (\ref{eqn:equation1_in_proof_prop_1}) is 0 almost everywhere for all $j\in[d]$. Thus, by the fact that $P_j(\cD)<t$ is equivalent to Eq. (\ref{eqn:equation3_in_proof_prop_1}), we have $\bT_P(\cdot;t)=0$ almost everywhere in $\cD$. Then $T_*=F_*=0$, and so that $\mFDR$ of $T_P(\cdot;t(\alpha))$ is equal to $0$, which contradicts the definition of $t(\alpha)$. As a consequence, we have 
\[
0 < (1-\alpha)(t\cdot T_*-(1-t)\cdot F_*) = (t-\alpha)T_*+(1-t)(\alpha(T_*+F_*)-F_*)=(t-\alpha)T_*,
\]
where the last equality uses the fact that $\mFDR(\bT_P(\cdot;t(\alpha)),\Pi)=\alpha$ so that $\alpha(T_*+F_*)-F_* = 0$. Since $T_* \ge 0$, we have from  the equation above that $t > \alpha$. 

Now for any test $\bT_0: \Omega \to \{ 0, 1\}^d$ with $\mFDR(\bT_0,\Pi)\le\alpha$, we define $T=\E[\TP(\bT_0)]$, $F=\E[\FD(\bT_0)]$. Since $\mFDR(\bT_0,\Pi)\le\alpha$, we have 
\begin{equation}\label{eqn:equation2_in_proof_prop_1}
\alpha(T+F)-F\ge0. 
\end{equation}
As a consequence, for $t=t(\alpha)$, we have
\begin{align*}
(t-\alpha)T\le&~ (t-\alpha)T+(1-t)(\alpha(T+F)-F)~~~~~~~~~~~~~~~~~~~~~~~~~~~ \text{ by } (\ref{eqn:equation2_in_proof_prop_1})\\
=&~(1-\alpha)(t\cdot T-(1-t)\cdot F)\\
\le&~(1-\alpha)(t\cdot T_*-(1-t)\cdot F_*)~~~~~~~~~~~~~~~~~~~~~~~~~~~~~~~~~~~ \text{ by } (\ref{eqn:equation4_in_proof_prop_1}) \\
=&~ (t-\alpha)T_*+(1-t)(\alpha(T_*+F_*)-F_*)\\
=&~(t-\alpha)T_*,~~~~~~~~~~~~~~~~~~~~~~~~~~~~~~~~~~~\text{ since } \mFDR(\bT_P(\cdot;t(\alpha)),\Pi)=\alpha.
\end{align*}
Since we have shown that $t - \alpha > 0$, it follows  from the equation above that $T \le T_*$. This further implies  $$\mTPR(\bT_0,\Pi)=T/\E_{\bbeta_0}[\#\{ j : j \not \in \Null\}]\le T_*/\E_{\bbeta_0}[\#\{ j : j \not \in \Null\}]=\mTPR(\bT_P(\cdot;t(\alpha)),\Pi),$$ which proves Eq. (\ref{eqn:mTPR-optimal}).

\section{Proof of Proposition \ref{prop:Bayes_optimality}}\label{sec:Bayes_optimality_proof_BFDR}

To prove the proposition, we start with the following lemmas. 
\begin{lemma}\label{lem:optimal_BFDR_BTPE}
For any fixed $\lambda > 0$, we define
\begin{equation}\label{eqn:equation1_lemma2}
\begin{aligned}
U(\bT, \lambda) \equiv \mTPR(\bT, \Pi) - \lambda \cdot \BFDR(\bT, \Pi). 
\end{aligned}
\end{equation}
Then $\bT=\bC_{P}(\, \cdot \,;\lambda)$ maximizes $U(\bT, \lambda)$ over $\bT:\Omega \to \{0, 1\}^d$, where the definition of $\bC_P$ is given by Eq. (\ref{eqn:def-Cpop}). 
\end{lemma}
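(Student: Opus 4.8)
\textbf{Proof proposal for Lemma~\ref{lem:optimal_BFDR_BTPE}.}
The plan is to reduce the maximization of $U(\bT,\lambda)$ over all tests to a pointwise (in $\cD$) problem that is solved simply by sorting the local fdrs. First I would condition on the data: writing $\E_{\bbeta_0 \sim \Pi,\,\cD}[\,\cdot\,] = \E_{\cD}\,\E_{\bbeta_0 \mid \cD}[\,\cdot\,]$ and using $\P(\beta_{0,j}=0\mid\cD) = P_j(\cD)$ (the definition of the local fdr, Eq.~\eqref{eqn:def-local-fdr}), one obtains $\E[\TD(\bT)\mid\cD] = \sum_{j=1}^d T_j(\cD)\bigl(1-P_j(\cD)\bigr)$ and $\E[\FD(\bT)\mid\cD] = \sum_{j=1}^d T_j(\cD)P_j(\cD)$, so that
\[
U(\bT,\lambda) \;=\; \E_{\cD}\!\left[\, \frac{1}{N}\sum_{j=1}^d T_j(\cD)\bigl(1-P_j(\cD)\bigr) \;-\; \lambda\cdot\frac{\sum_{j=1}^d T_j(\cD)P_j(\cD)}{\Rej(\bT;\cD)\vee 1} \,\right].
\]
For each fixed realization of $\cD$ the bracketed quantity is affine in the conditional law of the $\{0,1\}^d$-valued vector $\bT(\cD)$; hence restricting to deterministic tests loses nothing, and $U$ is maximized exactly when, for almost every $\cD$, the vector $\bT(\cD)$ maximizes the integrand.

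I would then solve this inner maximization by stratifying on the number of rejections $k = \#\{j: T_j(\cD)=1\}$. For fixed $k\ge 1$ the denominator equals the constant $k$, and the integrand becomes $\sum_{j:\,T_j(\cD)=1}\bigl[\tfrac1N - \bigl(\tfrac1N+\tfrac{\lambda}{k}\bigr)P_j(\cD)\bigr]$, a sum over the selected indices of a term that is strictly decreasing in $P_j(\cD)$; so for each $k$ the best choice rejects the $k$ coordinates with the smallest local fdrs, with optimal value $\tfrac{k}{N} - \bigl(\tfrac1N+\tfrac{\lambda}{k}\bigr)\sum_{j=1}^k P_{(j)}(\cD)$ (and value $0$ for $k=0$). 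Maximizing this over $k\in\{0,1,\dots,d\}$ is, up to the positive multiplicative constant $N$, precisely the one-dimensional optimization that defines $\what K(\lambda,\cD)$ in Eq.~\eqref{eqn:def-K}; therefore the pointwise maximizer rejects the $\what K(\lambda,\cD)$ coordinates with smallest local fdr, i.e.\ it is $\bC_P(\cdot;\lambda)$ of Eq.~\eqref{eqn:def-Cpop}. Taking $\E_{\cD}$ of the resulting pointwise inequality yields $U(\bC_P(\cdot;\lambda),\lambda)\ge U(\bT,\lambda)$ for every $\bT$. Assumption~\ref{ass:Bayes_optimality} enters only to guarantee that, almost surely, there are no ties among $\{P_j(\cD)\}_{j\in[d]}$ and no ties in the outer maximization over $k$, so that this pointwise maximizer is unambiguous and depends measurably on $\cD$.

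The main obstacle --- the reason one cannot just optimize $T_j(\cD)$ coordinate by coordinate --- is the coupling introduced by the shared denominator $\Rej(\bT;\cD)\vee 1$ in $\BFDR$. The stratification by the rejection count $k$ is exactly the device that removes this coupling: fixing $k$ freezes the denominator and turns each stratum into a transparent ``keep the $k$ smallest'' selection, after which the remaining optimization over $k$ reproduces $\what K(\lambda,\cD)$. The other ingredients (the conditioning identity, the reduction to deterministic tests, and the measurable-selection bookkeeping) are routine.
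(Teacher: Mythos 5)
Your proposal is correct and follows essentially the same route as the paper's own proof: condition on $\cD$, reduce to a pointwise maximization over $\{0,1\}^d$, stratify by the rejection count so that the {\BFDR} denominator is frozen, reject the $k$ hypotheses with smallest local fdr for each fixed $k$, and then optimize over $k$ to recover $\what K(\lambda,\cD)$ and hence $\bC_P(\cdot;\lambda)$. Two minor remarks: your inner objective, with the factor $(1/N+\lambda/k)$, matches the paper's appendix derivation exactly, whereas Eq.~(\ref{eqn:def-K}) as printed carries $(1-\lambda N/(K\vee 1))$ — an apparent sign typo in the paper rather than a flaw in your argument — and Assumption~\ref{ass:Bayes_optimality} is not actually needed for this lemma (the paper does not invoke it here), since $\bC_P$ attains the maximum even when ties make the pointwise maximizer non-unique.
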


\begin{proof}[Proof of Lemma \ref{lem:optimal_BFDR_BTPE}]
Recall the definition of $\bC_P$ in Eq. (\ref{eqn:def-Cpop}), $N$ in (\ref{eqn:def-N}), {\mTPR} in (\ref{eqn:def_BFDR}), and {\mTPR} in (\ref{eqn:def_mFDR_mTPR}). 
%\begin{equation}\label{eqn:equation2_lemma2}
%\begin{aligned}
%C_{P}(\lambda,\cD) = (\ones\{ P_j(\cD) \le P_{(\what K(\lambda,\cD))}(\cD) \})_{j \in [d]}, 
%\end{aligned}
%\end{equation}
%where
%\begin{equation}\label{eqn:equation3_lemma2}
%\begin{aligned}
%\what K(\lambda,\cD) = \arg\max_{K \in [d]} \Big( K / N - ( 1/N + \lambda / (K \vee 1) ) \sum_{j = 1}^K P_{(j)}(\cD) \Big). 
%\end{aligned}
%\end{equation}
Note that for  $\bT:\Omega\rightarrow\{0,1\}^d$, we have 
\[
\begin{aligned}
U(\bT, \lambda) =&~ \E_{\cD, \bbeta_0}\Big[\frac{\TP(\bT)}{N} - \lambda \frac{\FD(\bT)}{\Rej(\bT) \vee 1}\Big] \\
=&~  \E_{\cD, \bbeta_0}\Big[ \sum_{K = 0}^d \ones\{ \Rej(\bT) = K \} \Big( \frac{\TP(\bT)}{N} - \lambda \frac{\FD(\bT)}{K \vee 1} \Big) \Big] \\
=&~ \E_{\cD, \bbeta_0}\Big[ \sum_{K = 0}^d \sum_{j = 1}^d \ones\{ \Rej(\bT) = K, T_j = 1 \} \Big( \frac{\ones\{ j \not \in \Null\} }{N} - \lambda \frac{\ones\{ j \in \Null\}}{K \vee 1} \Big) \Big] \\
=&~ \E_{\cD}\Big[ \sum_{K = 0}^d \sum_{j = 1}^d \ones\{ \Rej(\bT) = K, T_j = 1 \} \Big( \frac{\P( j \not \in \Null  \vert \cD) }{N} - \lambda \frac{\P(j \in \Null \vert \cD) }{K \vee 1} \Big) \Big] \\
=&~ \E_{\cD}\Big[ \sum_{K = 0}^d \sum_{j = 1}^d \ones\{ \Rej(\bT) = K, T_j = 1 \} \Big( 1 / N - ( 1/N + \lambda / (K \vee 1) ) \P(j \in \Null \vert \cD) \Big) \Big]. \\
\end{aligned}
\]
Now, define $\overline U : \{ 0, 1\}^d \times \Omega \to \R$ as (for $\overline\bT\in \{0,1\}^d$ and $\cD \in \Omega$)
\begin{equation}\label{def_ubar}
\begin{aligned}
\overline U(\overline \bT, \cD) = \sum_{K = 0}^d \sum_{j = 1}^d \ones\{ \Rej(\overline \bT) = K, \overline T_j = 1 \} \Big( 1 / N - ( 1/N + \lambda / (K \vee 1) ) \P(j \in \Null \vert \cD) \Big),
\end{aligned}
\end{equation}
where $\Rej(\overline \bT)$ is the number of $1$ in $\overline \bT$. In order to maximize $U(\bT, \lambda)$ over $\bT  :\Omega \rightarrow \{0, 1\}^d$, we just need to maximize $\overline U(\overline \bT, \cD)$ over $\overline \bT \in \{0, 1\}^d$ for any fixed $\cD$, and set $\bT(\cD) = \overline \bT$. 

To do this, note that for any fixed $K$, we have 
\[
\max_{\overline \bT: \Rej(\overline \bT) = K} \sum_{j = 1}^d \ones\{\overline T_j = 1 \} \Big( 1 / N - ( 1/N + \lambda / (K \vee 1) ) P_j(\cD) \Big) =  \Big( K / N - ( 1/N + \lambda / (K \vee 1) ) \sum_{j = 1}^K P_{(j)}(\cD) \Big),
\]
where $\{ P_{(j)}(\cD) \}_{j \in [d]}$ are the order statistics of the local fdr $\{ P_j(\cD) \}_{j \in [d]}$ with $P_{(1)}(\cD) \le P_{(2)}(\cD) \le \cdots \le P_{(d)}(\cD)$. To maximize $\overline U(\overline \bT, \cD)$, we should select the number of rejection $K$ such that the right hand side of the equation above is maximized, which give rise to the $\what K$ as in Eq. (\ref{eqn:def-K}). This implies that $\bC_P$ attains the maximum of $U$. %So that $\overline \bT$ with corresponding $K$ attains the maximum. 
\end{proof}

\begin{lemma}\label{lem:argmax_lambda}
For functions $A,B:\{-1,+1\}^d\mapsto \R$ and $\lambda>0$, let $x_\lambda\in \argmax_{x\in \{-1,+1\}^d} \{A(x)-\lambda B(x)\}$. Then if $\lambda_1<\lambda_2$, we have $B(x_{\lambda_1})\ge B(x_{\lambda_2})$.
\end{lemma}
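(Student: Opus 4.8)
The plan is to run the classical monotone-comparative-statics (``supermodularity'') argument: compare the two maximizers by plugging each one into the other's objective, add the resulting optimality inequalities so that the $A$-terms cancel, and then divide by $\lambda_2 - \lambda_1 > 0$.

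Concretely, the first step is to record the two optimality inequalities coming from the definition of $x_{\lambda_1}$ and $x_{\lambda_2}$. Since $x_{\lambda_1} \in \argmax_{x \in \{-1,+1\}^d}\{A(x) - \lambda_1 B(x)\}$, testing it against the competitor $x_{\lambda_2}$ gives $A(x_{\lambda_1}) - \lambda_1 B(x_{\lambda_1}) \ge A(x_{\lambda_2}) - \lambda_1 B(x_{\lambda_2})$. Symmetrically, since $x_{\lambda_2} \in \argmax_{x \in \{-1,+1\}^d}\{A(x) - \lambda_2 B(x)\}$, testing it against $x_{\lambda_1}$ gives $A(x_{\lambda_2}) - \lambda_2 B(x_{\lambda_2}) \ge A(x_{\lambda_1}) - \lambda_2 B(x_{\lambda_1})$. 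Both maxima are attained since $\{-1,+1\}^d$ is finite, so there is no existence subtlety, and the argument below is valid for any choice of maximizers $x_{\lambda_1}, x_{\lambda_2}$ even when the argmax is not a singleton.

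The second step is to add the two inequalities. The terms $A(x_{\lambda_1})$ and $A(x_{\lambda_2})$ each appear once on the left and once on the right and cancel, leaving
\begin{equation*}
-\lambda_1 B(x_{\lambda_1}) - \lambda_2 B(x_{\lambda_2}) \ge -\lambda_1 B(x_{\lambda_2}) - \lambda_2 B(x_{\lambda_1}).
\end{equation*}
Collecting the $B(x_{\lambda_1})$ and $B(x_{\lambda_2})$ terms rearranges this to $(\lambda_2 - \lambda_1)\bigl(B(x_{\lambda_1}) - B(x_{\lambda_2})\bigr) \ge 0$. Since $\lambda_1 < \lambda_2$, the factor $\lambda_2 - \lambda_1$ is strictly positive, so dividing through yields $B(x_{\lambda_1}) \ge B(x_{\lambda_2})$, which is the claim.

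There is essentially no obstacle here: the statement holds for arbitrary real-valued $A$ and $B$ with no monotonicity, convexity, or uniqueness hypotheses, because the entire content is the two-line interchange argument above. The only point I would flag explicitly in the final writeup is that, since the argmax need not be unique, the conclusion is to be read as holding for every selection of maximizers $x_{\lambda_1}$ and $x_{\lambda_2}$. In the application (Proposition~\ref{prop:Bayes_optimality}), this lemma is combined with Lemma~\ref{lem:optimal_BFDR_BTPE} — after reducing the optimization over test functions $\bT : \Omega \to \{0,1\}^d$ to a finite choice — by taking $A$ and $B$ to be (the reduction of) $\mTPR(\cdot,\Pi)$ and $\BFDR(\cdot,\Pi)$, which shows that $\lambda \mapsto \BFDR(\bC_P(\cdot;\lambda),\Pi)$ is nonincreasing; together with a continuity argument this gives the existence of $\lambda(\alpha)$ in Eq.~\eqref{eqn:BFDR-alpha}.
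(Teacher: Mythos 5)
Your proof is correct and is essentially the paper's argument: both use the two optimality inequalities from testing each maximizer against the other, combine them so the $A$-terms drop out, and conclude $(\lambda_2-\lambda_1)\bigl(B(x_{\lambda_1})-B(x_{\lambda_2})\bigr)\ge 0$. The paper chains the inequalities rather than adding them, but this is only a cosmetic difference.
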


\begin{proof}[Proof of Lemma \ref{lem:argmax_lambda}]
For any $\lambda_1<\lambda_2$, by definition of $x_{\lambda_1},x_{\lambda_2}$
$$\lambda_2(B(x_{\lambda_1})-B(x_{\lambda_2}))\geq A(x_{\lambda_1})-A(x_{\lambda_2})\geq \lambda_1(B(x_{\lambda_1})-B(x_{\lambda_2})).$$ Thus $B(x_{\lambda_1})\geq B(x_{\lambda_2})$ since $\lambda_2>\lambda_1$.
\end{proof}

\begin{lemma}\label{bound_zero}
Let $Q$ be a probability measure on a measurable space $\cS$. Suppose that $f(x,\lambda)$ is a bounded function on $\cS\times \R$  such that (1) $f(x,\lambda)$ is a monotonic, right-continuous step function in $\lambda$ for any fixed $x \in \cS$; (2) $Q(\{x|f(x,\cdot) \text{\, is discontinuous at\,}\lambda\})=0$ for any $\lambda \in \R$. Then $\E_{x \sim Q} f(x,\lambda)$ is continuous in $\lambda$.
\end{lemma}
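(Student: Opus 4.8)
The plan is to derive this from the dominated convergence theorem, the only subtlety being how hypothesis~(2) controls the left-hand limits of the step functions $f(x,\cdot)$. First I would fix $\lambda_0\in\R$ and an arbitrary sequence $\lambda_n\to\lambda_0$, so that continuity of $\lambda\mapsto\E_{x\sim Q}f(x,\lambda)$ at $\lambda_0$ reduces to showing $\E_{x\sim Q}f(x,\lambda_n)\to\E_{x\sim Q}f(x,\lambda_0)$. Then I would invoke hypothesis~(2) at the single point $\lambda_0$: the set
\[
D_{\lambda_0}\equiv\{\,x\in\cS:\ f(x,\cdot)\text{ is discontinuous at }\lambda_0\,\}
\]
is $Q$-null. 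For every $x\notin D_{\lambda_0}$, continuity of $\lambda\mapsto f(x,\lambda)$ at $\lambda_0$ gives $f(x,\lambda_n)\to f(x,\lambda_0)$, hence $f(\cdot,\lambda_n)\to f(\cdot,\lambda_0)$ pointwise $Q$-almost everywhere. Using boundedness $|f|\le M$ together with the fact that $Q$ is a probability measure (so the constant $M$ is $Q$-integrable and dominates every $|f(\cdot,\lambda_n)|$), dominated convergence yields $\E_{x\sim Q}f(x,\lambda_n)\to\E_{x\sim Q}f(x,\lambda_0)$. Since the sequence $(\lambda_n)$ and the point $\lambda_0$ were arbitrary, continuity on all of $\R$ follows.

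Hypothesis~(1) plays only a supporting role. Monotonicity and boundedness of $f(x,\cdot)$ guarantee that the one-sided limits $f(x,\lambda_0^{\pm})$ exist for every $x$, while right-continuity makes $f(x,\lambda_0^{+})=f(x,\lambda_0)$ automatic; consequently membership of $x$ in $D_{\lambda_0}$ is exactly the event $f(x,\lambda_0^{-})\ne f(x,\lambda_0)$. It is worth noting that the same dominated-convergence argument applied along sequences $\lambda_n\downarrow\lambda_0$ shows that $\lambda\mapsto\E_{x\sim Q}f(x,\lambda)$ is right-continuous with no exceptional set at all; hypothesis~(2) is precisely what is needed to upgrade right-continuity to full (two-sided) continuity.

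I do not anticipate a genuine obstacle here, as the argument is essentially a single application of dominated convergence. The one point to handle with care is the measurability of $x\mapsto f(x,\lambda)$ for each fixed $\lambda$, which is needed merely for the expectation $\E_{x\sim Q}f(x,\lambda)$ to be well defined; I would take this as implicit in the hypotheses (it holds, for instance, whenever $(x,\lambda)\mapsto f(x,\lambda)$ is jointly measurable, which is the case in the intended application where $f(\cdot,\lambda)$ is an indicator-type functional of the data).
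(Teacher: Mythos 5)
Your proof is correct, and it is genuinely simpler than the one in the paper. You fix $\lambda_0$, note that hypothesis (2) says precisely that $f(x,\cdot)$ is continuous at $\lambda_0$ for $Q$-almost every $x$, and then a single application of dominated convergence (with the constant bound $M$ as dominating function, legitimate since $Q$ is a probability measure) along an arbitrary sequence $\lambda_n\to\lambda_0$ gives continuity of $\E_{x\sim Q}f(x,\lambda)$ at $\lambda_0$. The paper instead reduces without loss of generality to $f(x,\cdot)$ non-decreasing with $f(x,0)=0$, introduces the jump function $V_x$, the associated jump measure $\Lambda_x$, and a lifted measure $\tilde Q$ on $\cS\times\R$, and then treats the right and left limits separately: the right limit via dominated convergence over $\tilde Q$, and the left limit via a Fubini computation showing that the total jump mass sitting at $\lambda=\lambda_0$ integrates to zero under hypothesis (2). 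Your argument bypasses all of this machinery and, as you note, never actually uses monotonicity, right-continuity, or the step-function structure — hypothesis (1) only matters for interpreting discontinuity at $\lambda_0$ as a left-limit mismatch and for the bonus observation that right-continuity of the expectation holds with no exceptional set. So your route is both more elementary and strictly more general (it proves the lemma for any bounded $f$, measurable in $x$ for each $\lambda$, satisfying (2)); what the paper's construction buys is mainly an explicit bookkeeping of where the discontinuity mass lives, which is not needed for the conclusion. The only point to keep implicit-but-stated, as you did, is measurability of $x\mapsto f(x,\lambda)$ (and of the null set in (2)), which both proofs take for granted since the statement already presupposes the expectation is well defined.
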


\begin{proof}[Proof of Lemma \ref{bound_zero}] 
Without loss of generality, we assume that $f(x,\lambda)$ is non-decreasing in $\lambda$ and $f(x,0)=0$ for all $x\in\cS$. Denote  the set of discontinuous points of $f(x,\cdot)$ by $D_x$ and let  $V_x: \cS \mapsto \R$ be  $V_x(a)\equiv \lim_{\lambda\to a+}f(x,\lambda)-\lim_{\lambda\to a-}f(x,\lambda)$.  Since $f(x,\cdot)$ is monotonic, the left and right limits exist and hence $V_x$ is well-defined. Note that $V_x(a)$ is non-negative and $V_x(a)>0$ iff $f(x,\cdot)$ is discontinuous at $a$. For any $x\in\cS$, we also define a measure  $\Lambda_x((-\infty,\lambda_0])\equiv\sum_{\lambda\leq\lambda_0} V_x(\lambda)$. We further define a finite measure $\tilde Q$ on $\cS\times \R $ by $\tilde Q(A,B)\equiv \int_{x\in A} \int_{\lambda\in B}  \de \Lambda_x \de Q$.

 Now we prove the lemma. By definition, it suffices to show $\lim_{\lambda\to\lambda_0\pm} \E_{x \sim Q}f(x,\lambda)=\E_{x \sim Q}f(x,\lambda_0)$ for all $\lambda_0\in\R$. For the right limit, we have
 \begin{align*}
 \lim_{\lambda\to\lambda_0+} \E_{x \sim Q}f(x,\lambda)-\E_{x \sim Q}f(x,\lambda_0)&=\lim_{\eps\to 0+}\int_{x \in \cS} [f(x,\lambda+\eps)-f(x,\lambda) ]\de Q\\
 &=\lim_{\eps\to 0+}\int_{x \in \cS, \lambda\in \R} \ones\{\lambda\in(\lambda_0,\lambda_0+\eps]\} \de \tilde Q=0,
 \end{align*} where the last convergence equality comes from the dominated convergence theorem. For the left limit, 
\begin{align}
 \lim_{\lambda\to\lambda_0-} \E_{x \sim Q}f(x,\lambda)-\E_{x \sim Q}f(x,\lambda_0)&=\lim_{\eps\to 0+}\int_{\cS}[f(x,\lambda-\eps)-f(x,\lambda) ] \de Q\nonumber\\
 &=-\lim_{\eps\to 0+}\int_{x \in \cS,\lambda\in \R} \ones\{\lambda\in(\lambda_0-\eps,\lambda_0]\} \de \tilde Q\nonumber\\&=-\int_{x \in \cS,\lambda\in \R} \ones\{\lambda=\lambda_0\} \de \tilde Q\nonumber\\
 &=-\int_{\cS} V_x(\lambda_0) \de Q \label{measure_fubini},
 \end{align}
 where the third equality uses the dominated convergence theorem and the last equality follows from the definition of $\tilde Q$ and $\Lambda_x$ and Fubini's theorem. 
Since we assume $f(x,\lambda)$ is bounded and the set of $x$ at which $f(x,\cdot)$ is discontinuous at $\lambda$ has zero measure, $0\leq \int_{x \sim Q} V_x(\lambda) \de Q\leq 2\sup_{x,\lambda}|f (x, \lambda) |\cdot Q(\{x|f(x,\cdot) \text{\, is discontinuous at\,}\lambda\})=0$ for all $\lambda$. Thus \eqref{measure_fubini} equals zero and it completes the proof.
\end{proof}

\begin{proof}[Proof of Proposition \ref{prop:Bayes_optimality}]
We first prove that for any $\alpha \in A$, there exists $\lambda = \lambda(\alpha)$ such that 
$
\BFDR(\bC_P( \cdot;\lambda(\alpha)), \Pi) = \alpha
$. It suffices to show  that  $\BFDR(\bC_P(\cD;\lambda),\Pi)$ is continuous in $\lambda$. For any fixed $\cD$, we have $\bC_P(\cD;\lambda) \in \argmax_{\overline\bT}\overline U(\overline \bT, \cD)$ as shown in the proof of Lemma \ref{lem:optimal_BFDR_BTPE} and note that $\max_{\overline\bT}\overline U(\overline \bT, \cD)=\max_{\overline\bT}\Big[\E_{\bbeta_0|\cD}\frac{\TP(\overline\bT)}{N} - \E_{\bbeta_0|\cD}\lambda \frac{\FD(\overline\bT)}{\Rej(\overline\bT) \vee 1}\Big] $. Then it follows from Lemma \ref{lem:argmax_lambda} that $\E_{\bbeta_0|\cD} \FDP(\bC_P(\cD,\lambda))$ is non-increasing in $\lambda$. Moreover, in the proof of Lemma \ref{lem:optimal_BFDR_BTPE} we have shown that  
$\E_{\bbeta_0|\cD} \FDP(\bC_P(\cD;\lambda))=\sum_{j = 1}^{\hat{K}(\lambda,\cD)} P_{(j)}/{ (\hat K(\lambda,\cD) \vee 1)}.$ 
Therefore $\hat K(\lambda,\cD)$ is non-increasing in $\lambda$ and $\E_{\bbeta_0|\cD} \FDP(\bC_P(\cD;\lambda))$ has at most $d$ discontinuous points as a non-increasing function in $\lambda$.  For any $\lambda>0$, define 
$$M(\lambda)\equiv\{ \cD|\lambda \text{\phantom{ }is a discontinuous point of\phantom{ }} \E_{\bbeta_0|\cD} \FDP(\bC_P(\cD;\lambda))\}.$$ 
%Note that any discontinuous point $\lambda$ can be viewed as a function of $(P_1(\cD),...,P_d(\cD))$. 
Note that for fixed $\cD$, $\lambda$ being a discontinuous point of $\E_{\bbeta_0|\cD} \FDP(\bC_P(\cD;\lambda))$ implies that $\what{K}(\lambda,\cD)$ is discontinuous at $\lambda$. Thus $\cD\in M(\lambda)$ implies that $f_{K,\lambda}(\cD)\equiv K / N - ( 1/N + \lambda / (K \vee 1) ) \sum_{j = 1}^K P_{(j)}(\cD)$ are equal for some two different $K$'s. Therefore $\{(P_1(\cD),...,P_d(\cD)):\cD\in M(\lambda) \}\subseteq \bigcup_{1\le K_1<K_2\le d} \{(P_1(\cD),...,P_d(\cD)): f_{K_1,\lambda}(\cD)=f_{K_2,\lambda}(\cD)\}$, which is a finite union of solutions of linear equation, is a measure zero set with respect to the Lebesgue measure. Let $\tilde P(\cdot)$ be the induced probability measure of $(P_1(\cD),...,P_d(\cD))$ when $\cD \sim P$. Then we have $P(M(\lambda))=\tilde P(\{(P_1(\cD),...,P_d(\cD)): \cD\in M(\lambda) \})=0$ following directly from Assumption \ref{ass:Bayes_optimality}. Applying Lemma \ref{bound_zero} to $\E_{\bbeta_0|\cD} \FDP(\bC_P(\cD;\lambda))$ then gives the desired result that $\BFDR(\bC_P(\cD;\lambda))$ is continuous in $\lambda$. This proves the existence of $\lambda(\alpha)$ satisfying Eq. \eqref{eqn:BFDR-alpha}. 

Now we prove the second part of the proposition: given $\bT$ with $\BFDR(\bT,\Pi)\leq\alpha$, we show Eq. (\ref{eqn:BFDR_optimal}). Note that
\begin{align*}
\mTPR(\bT,\Pi)&=U(\bT,\lambda)+\lambda\cdot \BFDR(\bT,\Pi)\\&\le
U(\bT,\lambda)+\lambda\alpha \\&\le
U(\bC_P(\cdot;\lambda(\alpha)),\lambda)+\lambda \cdot \BFDR(\bC_P(\cdot;\lambda(\alpha)),\Pi)~~~~~~~~~~~~~ \text{ by Lemma } \ref{lem:optimal_BFDR_BTPE}
\\&=
\mTPR(\bC_P(\cdot;\lambda(\alpha)),\Pi).
\end{align*}
This proves Eq. (\ref{eqn:BFDR_optimal}) and hence concludes the proof.
\end{proof}

\section{The empirical Bayes variant: the {\EPoEdCe} procedure}\label{sec:EPoEdCe}

We have shown that {\PoPCe} and {\PoEdCe} control frequentist {\FDR} from finite-samples, and attain near-optimal power when the data are generated from a Bayesian linear model with a known prior $\Pi$ and a known noise level $\sigma^2$. In this section, we consider the setting when we do not know the prior and the noise level, and propose Empirical Bayes {\PoEdCe} (\EPoEdCe). {\EPoEdCe} also controls {\FDR} from finite-samples, and attains near-optimal power whenever the data are generated from a Bayesian linear model with unknown prior and noise level. The full algorithm is presented in Algorithm \ref{alg:epoedce}.

At a high level, {\EPoEdCe} first estimates the prior and the noise level using nonparametric methods, and then applies the {\PoEdCe} procedure. To ensure that the computed p-values are valid under the null hypothesis, we need to use a covariate-splitting method to estimate the prior and the noise level. In the following, we give a line-by-line description of {\EPoEdCe} (Algorithm \ref{alg:epoedce}): 
\begin{itemize}
\item Line \ref{algo_3_1}-\ref{algo_3_3} (Split the covariates and estimate the prior and the noise level): We first split the indices of covariates into $M$ equal-sized blocks $C_m \subseteq [d], m\in[M]$. For each $m \in [M]$, we use the dataset $\{((x_{ij})_{j \not\in C_m}, y_i) \}_{i \in [n]}$ to estimate the prior and the noise level. This can be done using standard nonparametric procedures, e.g., nonparametric maximum likelihood estimate (NPMLE) in \cite{robbins1950generalization,kiefer1956consistency}. The details of our numerical implementation of this part are presented in Appendix \ref{sec:empirical_bayes_estimation_prior}. %is given in Appendix \ref{sec:empirical_bayes_estimation_prior}.

\item Line \ref{algo_3_4}-\ref{algo_3_5} (Compute the p-to-e calibration threshold using the estimated prior and noise level): We compute the p-to-e calibration threshold $q_m$ for each block $m \in [M]$, using the same approach as {\PoPCe} (Line \ref{line_1_1}-\ref{line_1_2} of Algorithm \ref{alg:popce}). In Line \ref{algo_3_4}, we take
\begin{equation}\label{eqn:overline-FDR-algorithm}
\overline \FDR(s; \Pi, \tau) = \P_{(\beta_0, G) \sim \Pi \times \cN(0, 1)}( \beta_0 = 0 | \PostProb(\beta_0 + \tau G; \Pi, \tau) < s),
\end{equation}
where $\PostProb(y; \Pi, \tau) = \P_{(\beta_0, Z) \sim \Pi \times \cN(0, 1)}( \beta_0 = 0 \vert \beta_0 + \tau Z = y )$ (as defined in Eq. \ref{eqn:posterior_probability_low}).
\item Line \ref{algo_3_9}-\ref{algo_3_19} (Apply {\PoEdCe} using the estimated prior and noise level): For each coordinate $j$, compute the e-value $e_j$ using the same steps as in {\PoEdCe}, with $(\Pi,\sigma^2)$ replaced by $(\what \Pi_{-m},\what\sigma_{-m}^2)$, and threshold $q$ replaced by $q_m$. Here $m$ is the block that the coordinate $j$ belongs to. 
% \item Line \ref{algo_3_19} (eBH): Apply the eBH procedure to the e-values obtained in Line \ref{algo_3_17}. 

\item Hyperparameter $M$: We remark that the choice of  $M$ will not affect the validity of {\EPoEdCe}: for any choice of $M$, {\EPoEdCe} has frequentist {\FDR} control. The choice of $M$ will also have a small effect on the asymptotic power as long as the estimated prior and noise level are consistent. In numerical simulations, we choose $M = 50$.   %$M$ is the number of blocks. We use $M-1$ folds to estimate the prior (noise level) and perform {\PoEdCe} on the remaining fold using the  estimated prior (noise level).
    \end{itemize}

\begin{algorithm}
  \caption{The {\EPoEdCe} procedure}
  
   \begin{algorithmic}[1]
   \label{alg:epoedce}
      \REQUIRE $\{(\bx_i, y_i)\}_{i \in [n]} = (\Yb, \Xb)$; {\FDR} level $\alpha \in (0, 1)$; distribution $\P_{\bX}$; null proportion  $\pi_0$; hyperparameters $K, M \in \N$, and $\eps > 0$. 
       \STATE\COMMENT {\blue {Split the covariates and estimate the prior and noise level}}
      \STATE \label{algo_3_1}Partition $[d]$ into equal-sized blocks $\{ C_m\}_{m \in [M]}$. Let $\iota: [d] \to [M]$ with $\iota(j) = m$ iff $j \in C_m$. 
      \FOR{$m \in [M]$}
      \STATE \label{algo_3_3} Estimate the prior $\what \Pi_{-m}$ and noise level $\hat \sigma_{-m}^2$ using the dataset $\{((x_{ij})_{j \not\in C_m}, y_i)\}_{i \in [n]}$. 
        \STATE\COMMENT {\blue {Compute the p-to-e calibration threshold using the estimated prior and noise level}}
      \STATE \label{algo_3_4}Compute $\hat \tau_{-m}^2$ which solves the self-consistent equation (\ref{eqn:self_consistent}) with prior $\Pi = \what \Pi_{-m}$, noise level $\sigma^2 = \hat \sigma_{-m}^2$, and $\delta$ re-scaled to be $\delta m / (m-1)$. Compute $t_m = \max\{  s \in [0, 1]: \overline \FDR( s ; \what \Pi_{-m}, \hat \tau_{-m}^2) \le \alpha - \eps \}$, where $\overline \FDR$ is as defined in Eq \eqref{eqn:overline-FDR-algorithm}. 
     \STATE\label{algo_3_5} Compute $q_m=\Psi(t_m)$ where $\Psi$ is the CDF of $\PostProb(\hat\tau_{-m} Z;\what \Pi_{-m},\hat\tau_{-m})$ when $Z\sim\cN(0,1)$. 
      \ENDFOR
    \STATE\COMMENT {\blue {Apply {\PoEdCe} using the estimated prior and noise level}}
      \FOR{$j \in [d]$}\label{algo_3_9}
      \STATE Compute $\what \bbeta_{-j}$, the posterior expectation of $\btheta_0 \in \R^{d-1}$ given observation $(\Yb, \Xb_{-j})$, assuming the statistical model $\Yb = \Xb_{-j} \btheta_{0} + \beps \in \R^n$, where $\theta_{0, l}\sim_{i.i.d.} \what \Pi_{-\iota(j)}$ and $\eps_i \sim_{i.i.d.} \cN(0, \hat \sigma_{- \iota(j)}^2)$. 
      \STATE\label{algo_3_s_j} Compute $s_j = \< \Yb - \Xb_{-j} \what \bbeta_{-j}, \xb_j\>$. 
      \STATE Compute $u_j = \PostProb( ( \hat \tau_{-\iota(j)}^2 /\hat \sigma_{-\iota(j)}^2) s_j; \what \Pi_{-\iota(j)}, \hat \tau_{-\iota(j)})$. 
      \FOR{$k \in [K]$}
      \STATE Sample $\tilde \xb_j^{(k)} = (\tilde x_{1j}^{(k)}, \ldots ,\tilde x_{nj}^{(k)})^\sT$ where $\tilde x_{ij}^{(k)} \sim \cL(X_j \vert  \bX_{-j} = \bx_{i, -j})$ independently. 
      \STATE Compute $s_j^{(k)} = \< \Yb - \Xb_{-j} \what \bbeta_{-j}, \tilde \xb_j^{(k)}\>$. 
      \STATE Compute $u_j ^{(k)}= \PostProb( ( \hat \tau_{-\iota(j)}^2 /\hat \sigma_{-\iota(j)}^2) s_j^{(k)}; \what \Pi_{-\iota(j)}, \hat \tau_{-\iota(j)})$. 
      \ENDFOR
      \STATE Compute $p_j = (1/(K + 1)) ( 1 + \sum_{k = 1}^K \ones\{ u_j \ge u_j^{(k)} \})$. 
      \STATE \label{algo_3_17}Compute $e_j = \ones\{ p_j \le q_{\iota(j)} \} / q_{\iota(j)}$. 
      \ENDFOR
        % \STATE\COMMENT {\blue {eBH}}
      \STATE \label{algo_3_19}Reject the hypotheses with the $\hat k$ largest e-values, where 
	\[
	\hat k = \max \Big\{ k: \frac{\pi_0 d}{k e_{(k)}} \le \alpha \Big\}. 
	\]
   \end{algorithmic}
\end{algorithm}

The covariate-splitting method for estimating the prior ensures that for each null coordinate $j$, $\what \bbeta_{-j}$ only depends on $(\Yb, \Xb_{-j})$. Therefore,  $(\Yb - \Xb_{-j} \what \bbeta_{-j})$ and $\xb_j$ are independent. This further ensures that $p_j$ is a valid p-value, and hence ensures the validity of {\EPoEdCe}. This gives the following theorem with proof in Section \ref{sec:freq_proof}. 
%Regarding finite sample {\FDR} control, since for each coordinate $j$ we use independent data for estimating $\Pi,\sigma^2$ and computing $e_j$, it can be verified that the conditional randomization test argument still holds and {\FDR} control is guaranteed. To summarize,  $\EPoEdCe$ also has frequentist FDR control and achieves   asymptotically optimal {\mTPR} when the data come from a Bayesian linear model (see Theorem \ref{thm:frequentist_FDR_control_EB} and  Conjecture \ref{conj:EPoPCe_EPoEdCe_power} for details).

\begin{theorem}[Frequentist {\FDR} control of {\EPoEdCe}]\label{thm:frequentist_FDR_control_EB}
For any joint distribution $P \in \cM(P_{\bX})$ (c.f. Eq. \ref{eqn:M-P-X}), suppose that $\{(\bx_i, y_i)\}_{i \in [n]}$ are i.i.d. from $P$, then the {\EPoEdCe} procedure $\bT_\star$ (Algorithm \ref{alg:epoedce}) controls the frequentist {\FDR},
\[
\FDR(\bT_\star, P) \le \alpha. 
\] 
\end{theorem}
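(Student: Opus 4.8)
The plan is to follow the same reduction used in the proof of Theorem~\ref{thm:frequentist_FDR_control} (Appendix~\ref{sec:freq_proof}): by the {\eBH} guarantee of \cite{wang2022false}, the rejection rule in Line~\ref{algo_3_19} of Algorithm~\ref{alg:epoedce} controls the frequentist {\FDR} at level $\alpha$ as soon as every e-value attached to a true null coordinate is valid, i.e. $\E[e_j]\le 1$ for each $j\in\cH_0(P)$, and this holds irrespective of the dependence among the $e_j$'s. Since $e_j=\ones\{p_j\le q_{\iota(j)}\}/q_{\iota(j)}$, it suffices to show that, for null $j$, $\P(p_j\le q_{\iota(j)})\le q_{\iota(j)}$ in a suitable conditional sense, which in turn follows once $p_j$ is shown to be a conditionally super-uniform p-value. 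So the whole argument reduces to the p-value validity of the {\dCRT} p-values for the nulls — exactly as in the proof of Theorem~\ref{thm:frequentist_FDR_control} — the only new element being that here the prior and noise level are estimated from the data.

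The core step is to identify what, in the construction of $e_j$, can depend on the $j$-th covariate column $\xb_j$, and to check that nothing relevant does. Fix a null index $j$ and write $m=\iota(j)$, so $j\in C_m$. The estimates $\what\Pi_{-m}$ and $\hat\sigma_{-m}^2$ (Line~\ref{algo_3_3}) are computed from $\{((x_{il})_{l\notin C_m},y_i)\}_{i\in[n]}$, which omits column $\xb_j$; hence $\hat\tau_{-m}$, $t_m$ and the p-to-e threshold $q_m=q_{\iota(j)}$ (Lines~\ref{algo_3_4}--\ref{algo_3_5}) are measurable functions of $(\Yb,\Xb_{-j})$, and so is the distilled coefficient $\what\bbeta_{-j}$, which by construction depends only on $(\Yb,\Xb_{-j})$ together with $(\what\Pi_{-m},\hat\sigma_{-m}^2)$. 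Consequently, conditionally on $\cF_j\equiv\sigma(\Yb,\Xb_{-j})$, there is a single $\cF_j$-measurable map
\[
g:\ \bx\ \longmapsto\ \PostProb\!\big( (\hat\tau_{-m}^2/\hat\sigma_{-m}^2)\,\< \Yb-\Xb_{-j}\what\bbeta_{-j},\, \bx\>;\ \what\Pi_{-m},\ \hat\tau_{-m}\big)
\]
with $u_j=g(\xb_j)$ and $u_j^{(k)}=g(\tilde\xb_j^{(k)})$ for all $k\in[K]$.

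Next I would invoke the conditional-exchangeability argument behind {\dCRT} and {\CRT}. Because $\{(\bx_i,y_i)\}_{i\in[n]}$ are i.i.d.\ from $P$ and, under $H_{0j}$, $Y\perp X_j\mid\bX_{-j}$ (equivalently $\beta_{0,j}=0$ in the linear model, by \cite[Proposition~2.2]{candes2018panning}), the conditional law of $\xb_j$ given $\cF_j$ is $\bigotimes_{i\in[n]}\cL(X_j\mid\bX_{-j}=\bx_{i,-j})$, which is precisely the law from which each $\tilde\xb_j^{(k)}$ is drawn independently in Algorithm~\ref{alg:epoedce}. Hence $(\xb_j,\tilde\xb_j^{(1)},\dots,\tilde\xb_j^{(K)})$ is conditionally i.i.d.\ (in particular exchangeable) given $\cF_j$, and applying the $\cF_j$-measurable map $g$ coordinatewise shows $(u_j,u_j^{(1)},\dots,u_j^{(K)})$ is conditionally exchangeable given $\cF_j$. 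The standard rank bound for exchangeable tuples then gives $\P(p_j\le t\mid\cF_j)\le t$ for all $t\in[0,1]$, where $p_j=(1+\#\{k:u_j\ge u_j^{(k)}\})/(K+1)$ (the weak inequality $u_j\ge u_j^{(k)}$ is the conservative direction under ties). Since $q_m$ is $\cF_j$-measurable, taking $t=q_m$ and then the outer expectation yields
\[
\E[e_j]\ =\ \E\!\left[\frac{1}{q_m}\,\P\big(p_j\le q_m\mid\cF_j\big)\right]\ \le\ \E\!\left[\frac{q_m}{q_m}\right]\ \le\ 1
\]
(with the convention $e_j=0$ on $\{q_m=0\}$, consistent with $0/0=0$ since then $\ones\{p_j\le 0\}=0$). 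Thus every null e-value is valid, and the {\eBH} bound gives $\FDR(\bT_\star,P)\le\alpha$.

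The main obstacle — and the reason the covariate splitting into blocks $\{C_m\}_{m\in[M]}$ is built into Algorithm~\ref{alg:epoedce} — is precisely the measurability claim in the second step: that for a null coordinate $j\in C_m$, the estimated prior and noise level, and hence $\what\bbeta_{-j}$, $\hat\tau_{-m}$ and $q_m$, are functions of $(\Yb,\Xb_{-j})$ alone. If one instead estimated $(\Pi,\sigma^2)$ from all $d$ columns, these estimates would depend on $\xb_j$, the maps generating $u_j$ and the $u_j^{(k)}$'s would differ, conditional exchangeability would break, and $p_j$ would no longer be super-uniform; the whole argument collapses. Verifying this carefully — including that the block assignment $\iota$ is fixed in advance rather than data-dependent — is the crux; the remaining steps are routine and parallel the proof of Theorem~\ref{thm:frequentist_FDR_control}.
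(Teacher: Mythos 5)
Your proposal is correct and follows essentially the same route as the paper's proof: covariate splitting makes $\what\Pi_{-\iota(j)}$, $\hat\sigma^2_{-\iota(j)}$, $\hat\tau_{-\iota(j)}$, $q_{\iota(j)}$ and $\what\bbeta_{-j}$ measurable with respect to $(\Yb,\Xb_{-j})$, so under $H_{0j}$ the tuple $(u_j,u_j^{(1)},\dots,u_j^{(K)})$ is conditionally i.i.d.\ given $(\Yb,\Xb_{-j})$, whence $p_j$ is conditionally super-uniform, $\E_{H_{0j}}[e_j]\le 1$, and the {\eBH} guarantee yields $\FDR\le\alpha$. Your explicit treatment of ties and of the $q_m=0$ convention are harmless refinements of the same argument.
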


If $(\what \Pi_{-m},\what\sigma_{-m}^2)$ is a consistent estimator of $(\Pi,\sigma^2)$ (e.g., we believe that the NPMLE estimator is consistent \cite{zhong2022empirical}), $\EPoEdCe$ will have asymptotically the same power as {\PoEdCe} and hence is also near-optimal. Indeed, we have the following conjecture for the asymptotic optimality of {\EPoEdCe}, which follows from Conjecture \ref{conj:PoPCe_PoEdCe_power} and the continuity of $(t_m, \what\beta_j,\hat\tau_{-\iota(j)},\cP)$ (and hence $(s_j, u_j, p_j, e_j)$) as functions of $(\Pi, \sigma^2)$. 

\begin{conjecture}[Optimality of {\EPoEdCe}]\label{conj:EPoPCe_EPoEdCe_power}
Consider the asymptotic regime $n,d \to \infty$, $n / d \to \delta$, $K = K_n \to \infty$, and $\eps = \eps_n \to 0$ slow enough. If the estimator $(\what\Pi_{-m},\hat\sigma^2_{-m})$ is consistent, namely $\what\Pi_{-m}$ converges weakly to $\Pi$ in probability and $\hat\sigma^2_{-m}$ converges to $\sigma^2$ in probability for all $m \in [M]$, then under the conditions of the Bayesian linear model as per Assumption \ref{ass:Bayesian_linear_model}, {\EPoEdCe} has the same asymptotic power as {\CPoP} (for $\bT_\star$ to be {\EPoEdCe})
\[
\lim_{n \to \infty}\frac{1}{d} {\mTPR(\bT_\star, \Pi)}=\lim_{n \to \infty}\frac{1}{d} {\mTPR(\bC_P( \cdot;\lambda(\alpha)), \Pi)} .
\]
Subsequently, as per Proposition \ref{prop:Bayes_optimality}, the {\EPoEdCe} procedure is asymptotically {\BFDR} optimal (c.f. Definition \ref{def:BFDR-optimality}):
\[
\lim_{n \to \infty}\frac{1}{d} {\mTPR(\bT_\star, \Pi)}\geq \lim_{n \to \infty} \frac{1}{d}\max_\bT\Big\{ \mTPR(\bT, \Pi): \BFDR(\bT, \Pi) \le \alpha \Big\}.
\]
Consequently, according to Lemma \ref{lem:bound}, {\EPoEdCe} is asymptotically $(\alpha, \cM(P_\bX), \Pi, o_n(1))$-optimal procedure with frequentist {\FDR} control (c.f. Definition \ref{def:optimal-frequentist-fdr}). 
\end{conjecture}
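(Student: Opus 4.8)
Since Theorem~\ref{thm:frequentist_FDR_control_EB} already establishes finite-sample {\FDR} control of {\EPoEdCe} over every $P\in\cM(P_\bX)$, the only thing left is the power claim: under the Bayesian linear model (Assumption~\ref{ass:Bayesian_linear_model}) with consistent estimates $(\what\Pi_{-m},\hat\sigma_{-m}^2)$, the limiting normalized power $\lim_{n\to\infty}\frac1d\mTPR(\bT_\star,\Pi)$ of {\EPoEdCe} equals that of {\CPoP}. Once this is shown, the two displayed inequalities follow verbatim by chaining Proposition~\ref{prop:Bayes_optimality} and Lemma~\ref{lem:bound}, exactly as in Conjecture~\ref{conj:PoPCe_PoEdCe_power}. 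Because that conjecture already asserts the power claim for {\PoEdCe} run with the \emph{true} $(\Pi,\sigma^2)$, the plan is to reduce the present statement to it by a plug-in stability argument: show that {\EPoEdCe} asymptotically rejects the same hypotheses, up to an $o(d)$ fraction, as {\PoEdCe} with the true parameters.

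The reduction propagates the consistency $(\what\Pi_{-m},\hat\sigma_{-m}^2)\to(\Pi,\sigma^2)$ through the chain of deterministic maps defining the {\EPoEdCe} statistics, block by block, on the high-probability event that $\what\Pi_{-m}$ is weakly close to $\Pi$ and $\hat\sigma_{-m}^2$ close to $\sigma^2$. (i) Continuity in $(\Pi,\sigma^2)$ of the potential~\eqref{eqn:potential_effective_noise} and its minimizer — equivalently of the self-consistent equation~\eqref{eqn:self_consistent} — gives $\hat\tau_{-m}\to\tau_\star$; here the rescaling of $\delta$ in Algorithm~\ref{alg:epoedce} is the bookkeeping that lets $\hat\tau_{-m}$ track the effective noise of the $(d-1)$-covariate regression used downstream despite the covariates withheld in the estimation step. (ii) Joint continuity of $\overline\FDR(s;\Pi,\tau)$ from~\eqref{eqn:overline-FDR-algorithm} in all three arguments, together with strict monotonicity of $s\mapsto\overline\FDR(s;\Pi,\tau_\star)$ near level $\alpha-\eps$ so that the defining $\max$ is stable, gives $t_m\to t(\alpha-\eps)$, and hence $q_m\to q$ using continuity in $(\Pi,\tau)$ of the CDF of $\PostProb(\tau Z;\Pi,\tau)$ evaluated at the continuity point $t(\alpha-\eps)$. (iii) The substantive step: the distilled statistic $s_j=\langle\Yb-\Xb_{-j}\what\bbeta_{-j},\xb_j\rangle$, and therefore $u_j$ in {\EPoEdCe}, computed from the estimated parameters, have the same limiting joint empirical distribution with $\{\beta_{0,j}\}$ as the corresponding {\PoEdCe} quantities — this requires a stability statement for the high-dimensional posterior-mean (AMP / state-evolution) map with respect to the prior and noise level, at the level of empirical distributions and uniform over the $d$ coordinates. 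Granting (iii), the resampled $\{u_j^{(k)}\}$ and the $p$-values $p_j$ inherit the same limit, and since $K=K_n\to\infty$ the null $p$-values are asymptotically uniform.

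Combining: {\EPoEdCe} thresholds $p$-values asymptotically equal to those of {\PoEdCe} at thresholds $q_m$ asymptotically equal to the common threshold $q$ of {\PoEdCe}, so by the same {\eBH} reduction~\eqref{eqn:ebh-condition} and concentration of $|\{j:p_j\le q\}|/d$ used for {\PoEdCe}, the two procedures reject the same hypotheses up to an $o(d)$ fraction and therefore share $\lim\frac1d\mTPR$; Conjecture~\ref{conj:PoPCe_PoEdCe_power} then yields the first display, Proposition~\ref{prop:Bayes_optimality} the second, and Lemma~\ref{lem:bound} together with Theorem~\ref{thm:frequentist_FDR_control_EB} the conclusion that {\EPoEdCe} is asymptotically $(\alpha,\cM(P_\bX),\Pi,o_n(1))$-optimal with frequentist {\FDR} control (Definition~\ref{def:optimal-frequentist-fdr}). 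The main obstacle is item (iii): weak consistency $\what\Pi_{-m}\Rightarrow\Pi$ does not obviously transfer through the posterior-mean map to convergence of the empirical distribution of $\{(\beta_{0,j},s_j)\}$, so one needs continuity of the fixed point of the replica / state-evolution equations in the prior under the weak topology, plus a quantitative comparison trading the NPMLE estimation rate against the fixed block count $M$ and the $\eps_n\to0$ schedule — and the whole argument is of course conditional on Conjecture~\ref{conj:PoPCe_PoEdCe_power}, which is itself open.
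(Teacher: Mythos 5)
Your proposal takes essentially the same route as the paper: the paper justifies this conjecture in one line as following from Conjecture~\ref{conj:PoPCe_PoEdCe_power} together with continuity of $(t_m,\what\beta_j,\hat\tau_{-\iota(j)},\cP)$ (hence of $(s_j,u_j,p_j,e_j)$) as functions of $(\Pi,\sigma^2)$, which is exactly your plug-in stability reduction, with finite-sample validity supplied by Theorem~\ref{thm:frequentist_FDR_control_EB} and the chaining through Proposition~\ref{prop:Bayes_optimality} and Lemma~\ref{lem:bound}. Your additional remarks — in particular that the stability of the high-dimensional posterior-mean map under weak convergence of the prior (your item (iii)) is the substantive unproved step, and that everything is conditional on Conjecture~\ref{conj:PoPCe_PoEdCe_power} itself — accurately reflect why the statement is left as a conjecture rather than a theorem.
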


% \begin{proof}[Justification of Conjecture \ref{conj:EPoPCe_EPoEdCe_power}]
% This conjecture follows from Conjecture \ref{conj:PoPCe_PoEdCe_power} and the continuity of $t_m,\what \beta_j,\hat\tau_{-\iota(j)},\cP$ (and hence $s_j$, $u_j$, $p_j$, $e_j$) as functions of $\Pi$ and $\sigma$.
% \end{proof}

\subsection{Estimation of the prior $\Pi$ in {\EPoEdCe}}\label{sec:empirical_bayes_estimation_prior}

The log-likelihood function of observing $(\Xb, \Yb)$ given prior $\Pi \in \ProbFam(\R)$ and noise level $\sigma^2 \in \R$ is given by 
\[
\log p_{(\Pi, \sigma^2)}( \Yb,\Xb) = \log \int_{\R^d} \frac{1}{(\sqrt{2 \pi \sigma^2})^d} \exp\Big\{ - \frac{\| \Yb - \Xb \bbeta_0\|_2^2}{ 2 \sigma^2} \Big\} \Pi(\de \bbeta_0). 
\]
In principle, we can jointly estimate $(\Pi, \sigma^2)$ using nonparametric maximum likelihood estimate (NPMLE; \cite{robbins1950generalization,kiefer1956consistency}). However, in our numerical implementation of {\EPoEdCe}, due to the heavy computational burden of NPMLE, we consider a simpler parametric setting instead. In particular, we assume that the noise level $\sigma^2$ is known and the true prior  $\Pi$ is a three point distribution supported on $\{-1,0,1\}$, i.e., $\Pi= \pi_0 \delta_0 + \pi_1 \delta_1 + (1 - \pi_0 - \pi_1) \delta_{-1}$. We further consider the setting when the null proportion $\pi_0$ is given, and we only estimate a single parameter $\pi_1$. 

In order to estimate $\pi_1$, we further use a heuristic method as following. First, we choose some $\lambda>0$ and compute the ridge regression estimator
\begin{equation}\label{eqn:ridge-formula}
\what\bbeta_{\ridge}=(\Xb^\top\Xb+2\lambda \id_d)^{-1}\Xb^\top\Yb.
\end{equation}
Under Assumption \ref{ass:Bayesian_linear_model} (the bayesian linear model), in the limit of $n, d \to \infty, n / d \to \delta$, the empirical distribution of entries of $\what\bbeta_{\ridge}$ satisfies that for any sufficiently smooth function $\psi$, we have
 \begin{align*}
\lim_{n, d \to \infty, n / d \to \delta} \frac{1}{d}\sum_{j= 1}^d \psi(v \cdot \what\beta_{\ridge,j}) = \lim_{n, d \to \infty, n / d \to \delta} \frac{1}{d}\sum_{j= 1}^d \psi(u_j),
 \end{align*} 
 where $u_j = \beta_{0, j} + \tau G_j$ with $(\beta_{0, j}, G_j)_{j \in [d]} \sim_{i.i.d.} \Pi \times \cN(0, 1)$, %where $\beta_{0,j}$ is the $j$-th entry of the true parameter vector $\bbeta_0$, $G_j$ are i.i.d. standard Gaussian, 
and $\tau=\tau(\delta,\sigma^2,\lambda), v=v(\delta,\sigma^2,\lambda)$ are the unique solution to the equations  
% \begin{align*}
% \delta(v-1-2\lambda)v&=v-1\\
% \delta(\tau^2-\sigma^2)&=\frac{\tau^2}{v^2}+\delta^2(v-1-2\lambda)^2\rho,
% \end{align*} 
%with $\rho=\E_{X\sim\Pi}X^2=1-\pi_0$.  
\begin{equation}\label{eqn:ridge_self-consistent}
 \begin{aligned}
 \delta(v-1-2\lambda)v&=v-1,\\
 \delta(\tau^2-\sigma^2)&=\frac{\tau^2}{v^2}+\delta^2(v-1-2\lambda)^2 (1-\pi_0). 
 \end{aligned}
 \end{equation} 
%\lc{what is the $\times$ here?}\tj{just multiplication}
Notice that the likelihood function of observing $\bu = \{ u_j\}_{j \in [d]}$ given prior $\Pi$ is given by 
 \begin{align*}
 \log q_{\Pi}(\bu; \tau^2) = \sum_{i=1}^{d}\log \int \frac{1}{\sqrt{2\pi\tau^2}} \exp\left\{- \frac{(u_j -\theta)^2}{2\tau^2}\right\}\Pi(\de\theta).
 \end{align*}
This suggests that we can take an estimator of form $\what \Pi = \hat \pi_1 \delta_1 +\pi_0 \delta_0 + (1 - \hat \pi_{1} - \pi_0) \delta_{-1}$ where $\hat \pi_1$ is given by 
\begin{equation}\label{eqn:MLE_pi1}
\hat \pi_1 = \argmax_{\pi_1\in[0,1-\pi_0]} \log q_{\Pi}(v \times \what\bbeta_{\ridge}; \tau^2).
\end{equation}
Note that this is essentially a one-dimensional optimization problem, and computing $q_{\Pi}$ only involves one-dimensional integrations. Hence this is numerically non-expensive to be solved. 

In summary, our algorithm to estimate $\what \Pi = \hat \pi_1 \delta_1 +\pi_0 \delta_0 + (1 - \hat \pi_{1} - \pi_0) \delta_{-1}$ is given as following: (1) Compute $\what\bbeta_{\ridge}$ via Eq. (\ref{eqn:ridge-formula}) for a fixed $\lambda > 0$; (2) Solve $(\tau, v)$ which is the unique solution to Eq. (\ref{eqn:ridge_self-consistent}) (asuming $\sigma^2$ and $\pi_0$ is known); (3) Compute $\hat \pi_1$ being the solution of Eq. (\ref{eqn:MLE_pi1}).

%So, $v\cdot \what \beta_{\ridge,j}$ can be seen as i.i.d. samples generated from a Bayesian model $x\sim\mathcal N(\theta,\tau^2)$ with known variance $\tau^2$ and unknown parameter $\theta\sim \Pi$. 

%It then follows that the log-likelihood function of $\what\bbeta_{\ridge}$ 
% \begin{align*}\log p(\what\bbeta_{\ridge}|\Pi)&\approx
% \sum_{i=1}^{d}\log p(\what \beta_{\ridge,j}|\Pi)\approx \sum_{i=1}^{d}\log \int P(v\cdot \what \beta_{\ridge,j}|\theta)\Pi(\theta)\de\theta\\&=\sum_{i=1}^{d}\log \int \frac{1}{\sqrt{2\pi\tau^2}} \exp\left\{\frac{-(v\cdot \what \beta_{\ridge,j}-\theta)^2}{2\tau^2}\right\}\Pi(\theta)\de\theta.
% \end{align*}

\section{Proofs of Theorem \ref{thm:frequentist_FDR_control}, \ref{thm:frequentist_FDR_control_EB}}\label{sec:freq_proof}

\begin{proof}[Proof of Theorem \ref{thm:frequentist_FDR_control}]
Here we give the proof for {\PoEdCe}. The proof for {\PoPCe} is almost the same. 

By Theorem 5.1 in \cite{wang2022false}, it suffices to show that $(e_j)_{j\in[d]}$ defined in {\PoEdCe}   are valid e-values.  Recall that we denote $(y_i)_{i\in[n]}$ by $\Yb\in\R^n$, $(\bx_i)_{i\in[n]}$ by $\Xb\in\R^{n\times d}$, the $j$-th column of $\Xb$ by $\xb_j$. We also let $\Xb_{-j}$ be the matrix obtained by removing $j$-th column of $\Xb$. For any $j\in [d]$, under the null hypothesis $H_{j0}: Y\indep X_j|\bX_{-j}$, we have
   $$\xb_j|(\Xb_{-j},\Yb) \overset{d}{=} \xb_j|\Xb_{-j}\overset{d}{=}\tilde{\xb}_j^{(k)}|\Xb_{-j}\overset{d}{=}\tilde{\xb}_j^{(k)}|(\Xb_{-j},\Yb)$$
   holds for each $k\in[K]$. Moreover, $\xb_j$ and $\tilde{\xb}_j^{(k)}$ are independent conditional on $(\Xb_{-j}, \Yb)$ by construction. Since the posterior expectation $\what{\bbeta}_{-j}$ is a function of $(\Xb_{-j}, \Yb)$,  it follows from the conditional independence of $\xb_j$ and $(\tilde{\xb}_j^{(k)})_{k\in[K]}$ that $s_j$ and $({s}_j^{(k)})_{k\in [K]}$ are i.i.d.  conditional on $(\Xb_{-j}, \Yb)$, and therefore $u_j$ and $(u_j^{(k)})_{k\in[K]}$ are also  i.i.d. conditional on $(\Xb_{-j}, \Yb)$. Combining this with symmetry and taking expectation over $(\Xb_{-j}, \Yb)$, we obtain $P(p_j\leq c/(K+1))\leq c/(K+1)$ for $c=1,2,...,K+1$, for any $P$ such that $H_{j0}$ holds. Hence $p_j$ is a valid p-value for each $j\in[d]$. Finally, letting $e_j = \ones\{ p_j \le t \} / t$ converts a valid p-value into a valid e-value since $\int_\R \ones\{x\le t \}/t \cdot\de x =1$.
\end{proof}

\begin{proof}[Proof of Theorem \ref{thm:frequentist_FDR_control_EB}]
 Similar to the proof of Theorem \ref{thm:frequentist_FDR_control}, it suffices to show that $(e_j)_{j\in[d]}$ defined in Algorithm \ref{alg:epoedce} are valid e-values. Since $\hat \tau_{-\iota(j)}$, $\hat \sigma_{-\iota(j)}$ and $\what \Pi_{-\iota(j)}$ are constructed using $\Yb,\Xb_{-j} (\supseteq \Xb_{-\iota(j)})$, it follows that $\what \bbeta_{-j}$ is independent of $\xb_j$ under the $j$-th null hypothesis $H_{j0}: Y\indep X_j|\bX_{-j}$. Then following the same argument as in the proof of Theorem  \ref{thm:frequentist_FDR_control}, we have that for any $j\in[d]$, under the null, $s_j$ and $(s_j^{(k)})_{k\in[K]}$ are i.i.d. conditional on $(\Xb_{-j}, \Yb)$.

 Using again the fact that $\hat \tau_{-\iota(j)}$, $\hat \sigma_{-\iota(j)}$ and $\what \Pi_{-\iota(j)}$ are constructed using $\Yb,\Xb_{-j} (\supseteq \Xb_{-\iota(j)})$, and $u_j$ (or $(u_j^{(k)})_{k\in[K]}$) can be viewed as a function of $\hat \tau_{-\iota(j)}$, $\hat \sigma_{-\iota(j)}$, $\what \Pi_{-\iota(j)}$ and $s_j$ (or $(s_j^{(k)})_{k\in[K]}$), it follow that $u_j$ and $(u_j^{(k)})_{k\in[K]}$ are i.i.d. conditional on $(\Xb_{-j}, \Yb)$ under the null hypothesis. Combining this with symmetry, we obtain $P(p_j\leq c/(K+1)|\Yb,\Xb_{-j})\leq c/(K+1)$ for $c=1,2,...,K+1$, for any $P$ such that $H_{j0}$ holds. 
 Finally, 
 $$\E_{H_{j0}} [e_j]=\E_{H_{j0}}\Bigg[\frac{\ones\{ p_j \le t_{-\iota(j)} \}}{t_{-\iota(j)}}\Bigg]=\E_{H_{j0}}\Bigg[\E\left.\left[\frac{\ones\{ p_j \le t_{-\iota(j)} \}}{t_{-\iota(j)}}\right|\Yb,\Xb_{-j}\right]\Bigg]=\E_{H_{j0}}\Bigg[\frac{P(p_j \le t_{-\iota(j)}|\Yb,\Xb_{-j})}{t_{-\iota(j)}}\Bigg]\leq 1,$$
 where the last step follows from the fact that $p_j$ is a valid p-value conditional on $(\Yb,\Xb_{-j})$, and $t_{-\iota(j)}$ is a function of $(\Yb,\Xb_{-j})$. Therefore, $e_j$ is a valid e-value and we conclude the proof.
\end{proof}

\section{Intuitions of Conjecture \ref{conj:Posterior_probability_curve}}\label{sec:justification_conj_1}

Conjecture \ref{conj:Posterior_probability_curve} is based on the following heuristic formalism which we derive using the replica method in statistical physics. This formalism gives the asymptotic joint empirical distributions of $\{ (\what \beta_{ j}, P_j(\cD)) \}_{j \in [d]}$ and $\{ (\what \beta_{ j}, \hat \beta_j(\cD)) \}_{j \in [d]}$. These results are not rigorous proofs, and we leave the proofs for future work. We will provide numerical verifications of the formalism in Appendix \ref{sec:experiment-verification}. 

\begin{formalism}\label{fm:post_mean_zero}
Let $\cD = (\Xb,\Yb)$ be generated from the Bayesian linear model (Assumption \ref{ass:Bayesian_linear_model}). Let $\what \beta_{ j}$ be the posterior expectation of $\beta_{0,j}$ and let $P_{ j}$ be the posterior probability of $\beta_{0,j} = 0$, i.e., 
\[
\what \beta_{j}(\cD) = \E[\beta_{0,j} \vert \cD], ~~~  P_{j}(\cD) = \P(\beta_{0,j} = 0 \vert \cD). 
\]
Then for any sufficiently smooth function $\psi: \R \times \R \mapsto \R$, we have
%\[
%\begin{aligned}
%\{ (\beta_{0, j}, \what \beta_{\Bayes, j}) \}_{j \in [d]} \cong&~ \{ (\beta_{0, j}, \cE(\beta_{0, j} + \tau_\star Z_j)) \}_{j \in [d]}, ~~ d \to \infty, \\
%\{ (\beta_{0, j}, \hat P_{\Bayes, j}) \}_{j \in [d]} \cong&~ \{ (\beta_{0, j}, \PostProb(\beta_{0, j} + \tau_\star Z_j)) \}_{j \in [d]}, ~~ d \to \infty, 
%\end{aligned}
%\]
\begin{align}
\lim_{d\to \infty, n/d \to \delta} \frac{1}{d}\sum_{j = 1}^d \psi(\beta_{0, j}, P_j(\cD))  = &~ \E_{(\beta_0,Z)\sim\Pi\times\cN(0,1)}[ \psi(\beta_0, \PostProb(\beta_{0} + \tau_\star Z) )], \label{eqn:limit-local-fdr-formalism}\\
\lim_{d\to \infty, n/d \to \delta} \frac{1}{d}\sum_{j = 1}^d \psi(\beta_{0, j}, \what \beta_j(\cD))  = &~ \E_{(\beta_0,Z)\sim\Pi\times\cN(0,1)}[ \psi(\beta_0,  \cE(\beta_{0} + \tau_\star Z) )],\label{eqn:limit-PoE-formalism} 
\end{align}
where $\tau_\star$ is the unique minimizer to the potential $\phi$ in Eq.~\eqref{eqn:potential_effective_noise}, and $\cE$ and $\cP$ are given by Eq. \eqref{eqn:posterior_expectation_low} and \eqref{eqn:posterior_probability_low} respectively. 
%where $Z_j \sim_{i.i.d.} \cN(0, 1)$. 
%Then for any sufficiently nice function $\psi: \R^2 \to \R$, we have 
%\[
%\begin{aligned}
%\lim_{n, d \to \infty, n / d \to \delta} \frac{1}{d}\sum_{j = 1}^d \psi(\what \beta_{\Bayes, j}, \beta_{0, j}) =&~ \E_{(\beta_0, Z) \sim \Pi \times \cN(0, 1)}[\psi(\cE(\beta_0 + \tau_\star Z), \beta_0)], \\
%\lim_{n, d \to \infty, n / d \to \delta} \frac{1}{d}\sum_{j = 1}^d \psi(\hat P_{\Bayes, j}, \beta_{0, j}) =&~ \E_{(\beta_0, Z) \sim \Pi \times \cN(0, 1)}[\psi(\PostProb(\beta_0 + \tau_\star Z), \beta_0)]. \\
%\end{aligned}
%\]
\end{formalism}

We first use this formalism to give the intuitions of Conjecture \ref{conj:Posterior_probability_curve}. 

\noindent
{\bf Part (1). Limiting {\FDP} and {\TPP} of {\TPoP}. } Given this formalism, we first derive the limiting formula for {\FDP} and {\TPP} of {\TPoP} as given in Eq. (\ref{eqn:lim_tpop}). Recall the definition of $\FD$, $\TD$, $\Rej$ and $S$ as in Eq. (\ref{eqn:FD_TD}), rewritten here for convenience: 
\[
\begin{aligned}
\FD(\bT_P(t;\cdot)) = &~\sum_{j=1}^d I(\beta_{0,j}=0,P_j(\cD)<t),\quad\Rej(\bT_P(t;\cdot)) =\sum_{j=1}^d I(P_j(\cD) <t),\\
 \TD(\bT_P(t;\cdot)) = &~\sum_{j=1}^d I(\beta_{0,j}\ne0,P_j(\cD) <t),\quad S =\sum_{j=1}^d I(\beta_{0,j}\ne0).
\end{aligned}
\]
Note that these quantities are functions of the joint empirical distribution $\{(\beta_{0, j}, P_j(\cD))\}_{j \in [d]}$. Then applying Eq. \eqref{eqn:limit-local-fdr-formalism} in Formalism \ref{fm:post_mean_zero} gives
\[
\begin{aligned}
&\lim_{d\to \infty, n/d \to \delta} \frac1d\FD(\bT_P(t; \cdot)) = \P( \beta_0 = 0 , \Phi  < t),&
&\lim_{d\to \infty, n/d \to \delta} \frac1d\Rej(\bT_P(t;\cdot))= \P( \Phi  < t),\\
&\lim_{d\to \infty, n/d \to \delta} \frac1d\TD(\bT_P(t;\cdot)) = \P( \beta_0 \ne 0 , \Phi  < t),&
&\lim_{d\to \infty, n/d \to \delta} \frac1dS= \P( \beta_0 \ne 0),
\end{aligned}
\]
where $\Phi = \PostProb(\beta_{0} + \tau_\star Z)$ is as defined in Eq. \eqref{eqn:posterior_probability_low}, and the probabilities to the right hand side of the equations are taken with respect to $(\beta_0,Z)\sim\Pi\times\cN(0,1)$. 
Finally, by the definitions of {\FDP} and {\TPP} as in Eq. (\ref{eqn:FDP_TPP}), we have 
\begin{equation}
\begin{aligned}
\lim_{d \to \infty} \FDP(\bT; \cD, P) = \lim_{d \to \infty}\frac{ \FD(\bT; \cD, P) }{ \Rej(\bT; \cD) \vee 1} = \frac{\P( \beta_0 = 0 , \Phi  < t)}{\P( \Phi  < t)} = \P( \beta_0 = 0 | \Phi  < t),\\
\lim_{d \to \infty} \TPP(\bT; \cD, P) = \lim_{d \to \infty}\frac{ \TD(\bT; \cD, P) }{ |S(P) |\vee 1 } = \frac{\P( \beta_0 \ne 0 , \Phi  < t)}{\P( \beta_0 \ne 0)} = \P( \Phi  < t | \beta_0 \ne 0).
\end{aligned}
\end{equation}
This justifies Eq. (\ref{eqn:lim_tpop}). 

\noindent
{\bf Part (2). Limiting {\FDP} and {\TPP} of {\CPoP}. } We next provide the intuitions for the limiting formula for {\FDP} and {\TPP} of the {\CPoP} procedure as in Eq. (\ref{eqn:lim_cpop}). To show this, note that {\CPoP} (Eq. \eqref{eqn:def-Cpop}) can be viewed as {\TPoP} (Eq. \eqref{eqn:def_T_P}) with data dependant rejection threshold $t(\cD) = P_{\what K(\lambda, \cD)}(\cD)$. Since we already have the limiting {\FDP} and {\TPP} for {\TPoP}, we just need to show that the data-dependent rejection threshold $P_{\what K(\lambda, \cD)}(\cD)$ converges to $t_\star(\lambda)$, and then apply the limiting formula for {\TPoP}. That is, it suffices to show that
\begin{equation}\label{eqn:Cpop_formalism1-suffice}
\lim_{d \to \infty} P_{\what K(\lambda, \cD)}(\cD) = t_\star(\lambda),
\end{equation}
where $t_\star(\lambda)$ is given by Eq. \eqref{eqn:lambda_to_tstar}. 

Note that $\what K(\lambda, \cD)$ is given by the solution of an optimization problem as in Eq. \eqref{eqn:def-K}, and $t_\star(\lambda)$ is given by the solution of another optimization problem as in Eq. \eqref{eqn:lambda_to_tstar}. To show the asymptotic correspondence of $P_{\what K(\lambda, \cD)}(\cD)$ and $t_\star(\lambda)$, we just need to build connections between the objective functions of these two optimization problems. 

In order to build the connection between these two optimization problems, we define
$$h_d(t):=  \P(\Phi<t)  - \big( 1 - \lambda (N/d) / (\P(\Phi<t)  ) \big) \frac1d\sum_{j = 1}^{[d\cdot \P(\Phi<t)]} P_{(j)}(\cD) ,$$
$$h(t):= \P(\Phi < t) - \big( 1 - \lambda (1 - \pi_0) / \P(\Phi < t) \big) \E[ \Phi \cdot \ones\{ \Phi < t\} ].$$
Note that $h_d(t)$ depends on the empirical distribution of $\{ P_{j}(\cD)\}_{j \in [d]}$, so Eq. \eqref{eqn:limit-local-fdr-formalism} in Formalism \ref{fm:post_mean_zero} shows that
$$\lim_{d\to \infty, n/d \to \delta}h_d(t)=h(t).$$ 
Define $K(t) \equiv d\cdot F_\Phi(t)$ where $F_\Phi$ is cumulative distribution function of $\Phi$ as defined in Eq. (\ref{eqn:posterior_probability_low}), we have
$$ h_d(t)=K(t)  - \big( 1 - \lambda N / (K(t) \vee 1) \big) \sum_{j = 1}^{K(t)} P_{(j)}(\cD),$$
which has the same form as the objective function as in Eq. \eqref{eqn:def-K}. Furthermore, by Formalism \ref{fm:post_mean_zero} again, for any $t \in [0, 1]$, the $t$ quantile of $\{ P_j(\cD) \}_{j \in [d]}$ should converge to $F_\Phi^{-1}(t)$, i.e.,  we have
$\lim_{d\to \infty, n/d \to \delta} P_{d\cdot t}(\cD) = \lim_{d\to \infty, n/d \to \delta} F_\Phi^{-1}(t)$. Combining the arguments above, we have
$$\lim_{d\to \infty, n/d \to \delta} P_{\what K(\lambda, \cD)}(\cD) = \lim_{d\to \infty, n/d \to \delta} F_\Phi^{-1}(\what K(\lambda, \cD)/d) = \lim_{d\to \infty, n/d \to \delta}\argmax_{t \in [0, 1]}h_d(t) =\argmax_{t \in [0, 1]}h(t)=t_\star(\lambda).$$
This gives Eq. \eqref{eqn:Cpop_formalism1-suffice}, which gives the desired result Eq. \eqref{eqn:lim_cpop}. 
\qedsymbol

% Thus $\bC_P(\lambda;\cdot)$, which rejects smallest $\what K(\lambda, \cD)$ local fdrs, converges to $\bT_P(t_\star(\lambda);\cdot)$ as $d\to\infty$, i.e., \sm{Explain in details. }
% \[
% \forall \eps>0, \quad\lim_{d\to\infty} \P\Big( \frac{1}{d} \sum_{j = 1}^d \ones\{ T_{P, j}(\lambda ; \cD) \neq C_{P, j}(t_\star(\lambda); \cD) \} \ge \eps \Big) = 0, 
% \]
% which justifies Eq. (\ref{eqn:lim_cpop}).

\vskip0.5cm
\subsection{Intuitions of Formalism \ref{fm:post_mean_zero}}

We next provide the intuitions for Formalism \ref{fm:post_mean_zero}. Throughout this section, we let $\lim_{d\rightarrow\infty}$ or $\lim_{n\rightarrow\infty}$ both mean that $n,d\rightarrow\infty$ and $n/d\rightarrow\delta$. %Recall that Assumption \ref{ass:Bayesian_linear_model} gives $\beta_{0, j} \sim_{i.i.d.} \Pi$, $x_{ij}\sim_{i.i.d.}\cN(0,1/n)$, and $\eps_i\sim_{i.i.d.}\cN(0,\sigma^2)$, for $i \in [n]$ and $j\in [d]$. 
%\begin{itemize}
%\item $\beta_{j}\sim_{i.i.d.}\Pi$, $j=1,\ldots,d$,
%\item $X_{ij}\sim_{i.i.d.}\cN(0,1/n)$, $i=1,\ldots,n$, $j=1,\ldots,d$,
%\item $\eps_i\sim_{i.i.d.}\cN(0,\sigma^2)$,
%\end{itemize}

We use the free energy trick and the \textit{replica method} to calculate the asymptotic empirical distribution of $\{(\beta_{0, j}, \what \beta_{j}(\Yb,\Xb))\}_{j \in [d]}$ and $\{(\beta_{0, j}, P_{j}(\Yb,\Xb))\}_{j \in [d]}$.  Taking a test function $g:\R\rightarrow\R$ (eventually we will take $g$ to be $g(x) = x$ and $g(x) = \ones\{ x = 0\}$), a test function $\psi:\R\times\R \rightarrow\R$, and a scalar parameter $\lambda$, we define a perturbed Hamiltonion $H_{\lambda, N}$ which is a function of $\bar\bbeta=(\bbeta^1,\ldots,\bbeta^N)\in\R^{N\times d}$
\begin{equation}\label{eqn:hamiltonian}
\begin{aligned}
H_{\lambda,N}(\bar\bbeta):=-\frac{1}{2\sigma^2} \sum_{b=1}^N \lVert \Yb-\Xb\bbeta^b\rVert_2^2 + \lambda\sum_{i=1}^d \psi\Big(\frac1N\sum_{b=1}^N g(\beta_i^b),\beta_{0, i}\Big).
\end{aligned}
\end{equation}
We further define $Z_n$ to be the perturbed partition function
\begin{equation}\label{eqn:normalizer}
\begin{aligned}
%Z_n(\lambda,N):=\E_{\bX,\beps}\Big[\int_{\R^{N d}} \exp\{H_\lambda(\bar\bbeta) \} \Pi(\de\bar\bbeta)\Big],
Z_n(\lambda,N):=\int_{\R^{N d}} \exp\{H_\lambda(\bar\bbeta) \} \Pi(\de\bar\bbeta),
\end{aligned}
\end{equation}
where $\Pi(\de \bar \bbeta)$ stands for $\prod_{b = 1}^N \Pi(\de \bbeta^b)$ with some abuse of notations. We then define the free energy density 
\begin{equation}\label{eqn:define_phi_in_replica}
\begin{aligned}
\phi(\lambda,N):=\lim_{d\rightarrow\infty}\frac1d \E_{\Xb,\beps}\big[\log Z_n(\lambda,N)\big].
\end{aligned}
\end{equation}
Here the expectation is with respect to the covariate matrix $\Xb$, and the noise vector $\beps$ (recall that $\Yb = \Xb \bbeta_0 + \beps$ as in Assumption \ref{ass:Bayesian_linear_model}). Taking the derivative of the free energy density and using a heuristic change of limit with derivative, we have
\begin{equation}
\begin{aligned}
\partial_\lambda \phi(\lambda,N)= \lim_{d\rightarrow\infty}\frac1d\E_{\Xb,\beps}\Big[\Big\< \sum_{i=1}^d \psi\Big(\frac1N\sum_{b=1}^N g(\beta_i^b),\beta_{0,i}\Big) \Big\>_{H_{\lambda, N}}  \Big],
\end{aligned}
\end{equation}
where $\< \cdot \>_{H_{\lambda, N}}$ stands for the expectation with respect to $\bar \bbeta \sim Z_n(\lambda, N)^{-1} \exp\{ H_{\lambda, N}(\bar \bbeta)\}$. We then take the $N \to \infty$ limit and set $\lambda = 0$. Using again a heuristic change of limits, and by the law of large numbers, we have
\begin{equation}\label{eqn:formalism1_justify_eqn1}
\begin{aligned}
\lim_{N\rightarrow\infty}\partial_\lambda \phi(\lambda,N)\big|_{\lambda=0}= \lim_{d\rightarrow\infty}\E_{\Xb,\beps}\Big[\frac1d\sum_{i=1}^d \psi(\langle g(\beta_i) \rangle_\mu,\beta_{0,i}) \Big],
\end{aligned}
\end{equation}
where we used $\lim_{N \to \infty}\< N^{-1}\sum_{b=1}^N g(\beta_i^b) \>_{H_{\lambda = 0, N}} = \langle g(\beta_i) \rangle_\mu$, and $\langle\cdot\rangle_\mu$ stands for the expectation with respect to $\mu \in \ProbFam(\R^d)$, where
$$\mu(\de\bbeta)\propto \exp\Big\{ -\frac{1}{2\sigma^2} \lVert\Yb-\Xb\bbeta\rVert_2^2 \Big\} \Pi(\de \bbeta).$$
We would expect that the right hand side of Eq. (\ref{eqn:formalism1_justify_eqn1}) concentrates well around its expectation, so that as $d\to\infty$, we can remove the expectation operator, i.e.
\begin{equation}\label{eqn:formalism1_justify_eqn2}
\begin{aligned}
\lim_{N\rightarrow\infty}\partial_\lambda \phi(\lambda,N)\big|_{\lambda=0}= \lim_{d\rightarrow\infty}\frac1d\sum_{i=1}^d \psi(\langle g(\beta_i) \rangle_\mu,\beta_{0,i}). 
\end{aligned}
\end{equation}
Note that the right hand side of Eq. \eqref{eqn:formalism1_justify_eqn2} above is what we are interested in. Our goal is thus to calculate the left hand side of  Eq. \eqref{eqn:formalism1_justify_eqn2}, and we claim that the following equation holds. 
\begin{claim}\label{clm:replica}
Under the same setup as Formalism \ref{fm:post_mean_zero}, we have
\begin{equation}\label{eqn:replica_goal}
\lim_{N\rightarrow\infty}\partial_\lambda \phi(\lambda,N)\big|_{\lambda=0} =\E_{(\beta_0,G)\sim\Pi\times\cN(0,1)}\big[ \psi(\E[g(\beta_0)|\beta_0+\tau_\star G],\beta_0)\big]. 
\end{equation}
\end{claim}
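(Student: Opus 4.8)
The plan is to establish Claim~\ref{clm:replica} by the (non-rigorous) replica computation of the perturbed free energy density $\phi(\lambda,N)$ and then differentiate in $\lambda$ at $\lambda=0$. The starting point is the replica trick: since $\E[Z_n(\lambda,N)^r]=1+r\,\E[\log Z_n(\lambda,N)]+o(r)$ as $r\to0^+$, we have $\phi(\lambda,N)=\lim_{d\to\infty}\lim_{r\to0^+}\frac{1}{rd}\log\E_{\Xb,\beps}[Z_n(\lambda,N)^r]$, and I would compute $\E[Z_n^r]$ for integer $r$ first and then analytically continue to $r\to0$. For integer $r$, $Z_n(\lambda,N)^r$ is an integral over $rN$ replica vectors $\bbeta^{a,b}\in\R^d$ ($a\in[r]$, $b\in[N]$), weighted by $\exp\{-\tfrac{1}{2\sigma^2}\sum_{a,b}\lVert\Yb-\Xb\bbeta^{a,b}\rVert_2^2\}$, by the perturbation $\exp\{\lambda\sum_{a=1}^r\sum_{i=1}^d\psi(\tfrac1N\sum_b g(\beta_i^{a,b}),\beta_{0,i})\}$, and by the product prior. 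The structural point to exploit is that the $\psi$-perturbation is \emph{single-site}: it is a sum over coordinates $i$, each term depending on the replicas only through their $i$-th coordinates. Hence once the $d$-dimensional integral is factorized over coordinates, $\partial_\lambda\phi$ at $\lambda=0$ collapses to a scalar (one-coordinate) computation.

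Next I would carry out the replica evaluation. Using $\Yb=\Xb\bbeta_0+\beps$ with $\Xb$ having i.i.d. $\cN(0,1/n)$ entries and $\beps\sim\cN(0,\sigma^2\id_n)$, integrate out $(\Xb,\beps)$ row by row: $\{\bx_i^\sT(\bbeta_0-\bbeta^{a,b})\}_{a,b}$ is jointly Gaussian with covariance governed by the overlaps $Q_{(a,b),(a',b')}=\tfrac1d\langle\bbeta^{a,b},\bbeta^{a',b'}\rangle$, the magnetizations $m_{(a,b)}=\tfrac1d\langle\bbeta_0,\bbeta^{a,b}\rangle$, and $\rho=\tfrac1d\lVert\bbeta_0\rVert_2^2\to\E_\Pi[\beta_0^2]$. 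Introducing these order parameters via conjugate integration variables decouples the coordinates and gives $\E[Z_n^r]\approx\int\exp\{d\,S_r(\text{order parameters})\}$, to be evaluated by Laplace's method as $d\to\infty$. I would then adopt the replica-symmetric ansatz, which in this Bayes-optimal setting is forced into a rigid form by the Nishimori identities: all cross-overlaps among distinct replicas and between any replica and the planted $\bbeta_0$ collapse to a single value $q$, and self-overlaps equal $\rho$. Stationarity in $q$ reproduces exactly the self-consistent equation~\eqref{eqn:self_consistent} for an effective noise $\tau$ with $\rho-q=R(\tau,\delta)$, i.e. $\tau^2=\sigma^2+\tfrac1\delta R(\tau,\delta)$; the dominant saddle is the one selected by the global minimum of the potential~\eqref{eqn:potential_effective_noise}, so $\tau=\tau_\star$ as in Tanaka's formula~\eqref{eqn:equi_risk}. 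Under this RS saddle the effective single-site model for the $N$ inner replicas $(\beta^1,\dots,\beta^N)$ at coordinate $i$, conditioned on the planted value $\beta_{0,i}$, is: draw $Y_i=\beta_{0,i}+\tau_\star G_i$ with $G_i\sim\cN(0,1)$, then $(\beta^1,\dots,\beta^N)$ are i.i.d. draws from the scalar posterior $\propto e^{-(Y_i-\beta)^2/(2\tau_\star^2)}\Pi(\de\beta)$.

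To conclude, differentiate the RS free energy in $\lambda$ at $\lambda=0$: the unperturbed part contributes nothing, and the single-site term yields $\partial_\lambda\phi(\lambda,N)\big|_{\lambda=0}=\E_{(\beta_0,Y)}\big[\E_{(\beta^b)_{b\le N}}[\psi(\tfrac1N\sum_b g(\beta^b),\beta_0)]\big]$, where $Y=\beta_0+\tau_\star G$ and the $\beta^b$ are i.i.d. from the scalar posterior given $Y$; the $r\to0$ continuation is innocuous here since the perturbation enters the replicated action linearly in $r$. Sending $N\to\infty$ and applying the law of large numbers over the i.i.d. effective replicas, $\tfrac1N\sum_b g(\beta^b)\to\E[g(\beta)\mid Y]=\E[g(\beta_0)\mid\beta_0+\tau_\star G]$, a deterministic function of $(\beta_0,G)$; substituting gives $\E_{(\beta_0,G)\sim\Pi\times\cN(0,1)}[\psi(\E[g(\beta_0)\mid\beta_0+\tau_\star G],\beta_0)]$, which is~\eqref{eqn:replica_goal}. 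Taking $g(x)=x$ and $g(x)=\ones\{x=0\}$ then recovers~\eqref{eqn:limit-PoE-formalism} and~\eqref{eqn:limit-local-fdr-formalism} via~\eqref{eqn:formalism1_justify_eqn2}, with $\cE$ and $\cP$ as in~\eqref{eqn:posterior_expectation_low}–\eqref{eqn:posterior_probability_low}.

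The main obstacle is that essentially every step is heuristic: the exchange of the $r\to0$ limit with the integer-$r$ computation (analytic continuation), the validity of the replica-symmetric ansatz (absence of replica-symmetry breaking — morally guaranteed by Bayes-optimality and the Nishimori conditions but not proven here), the Laplace/saddle-point evaluation and the identification of the dominant saddle with $\tau_\star=\argmin_\tau\phi$, and the interchanges of $d\to\infty$, $N\to\infty$, $\lambda\to0$ and expectations already flagged around~\eqref{eqn:formalism1_justify_eqn1}–\eqref{eqn:formalism1_justify_eqn2}. A rigorous proof would most plausibly bypass replicas altogether — via Gaussian/adaptive interpolation bounds on the free energy in the spirit of~\cite{barbier2016mutual,barbier2019optimal} combined with overlap-concentration estimates, upgrading the single-letter characterization to the joint empirical distributions of $(\beta_{0,j},\what\beta_j(\cD))$ and $(\beta_{0,j},P_j(\cD))$ — and we leave this for future work.
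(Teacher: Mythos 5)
Your proposal is correct and takes essentially the same route as the paper's own derivation: replicate the perturbed partition function with $k$ outer and $N$ inner copies, reduce to overlap/magnetization order parameters and a Laplace saddle, impose replica symmetry, differentiate at $\lambda=0$ (envelope/Danskin step), and finish with the law of large numbers over the $N$ conditionally i.i.d.\ effective scalar replicas at noise level $\tau_\star$. The only (immaterial) difference is that you posit the Nishimori-forced fully symmetric ansatz up front, whereas the paper works with the more general block-RS form \eqref{eqn:matrix_Q_r} and recovers your structure at $\lambda=0$ through the solution $(\rho_2)_\star=0$ of the stationarity equations \eqref{eqn:scf1-in-proof}--\eqref{eqn:scf3-in-proof}.
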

Combining Eq. \eqref{eqn:formalism1_justify_eqn2} and \eqref{eqn:replica_goal}, taking $g(x)=x$, we get
\[
\lim_{d\rightarrow\infty}\frac1d \sum_{i=1}^d \psi(\langle \beta_{i} \rangle_\mu,\beta_{0,i}) = \E_{(\beta_0,G)\sim\Pi\times\cN(0,1)}\big[ \psi(\cE(\beta_0+\tau_\star G),\beta_0)\big],
\]
and taking $g(x)=\ones\{ x = 0\}$, we get
\[
\lim_{d\rightarrow\infty}\frac1d \sum_{i=1}^d \psi(\langle \ones\{ \beta_{i} = 0\} \rangle_\mu,\beta_{0,i}) = \E_{(\beta_0,G)\sim\Pi\times\cN(0,1)}\big[\psi( \PostProb(\beta_0+\tau_\star G),\beta_0)\big].
\]
These are the desired equations \eqref{eqn:limit-local-fdr-formalism} and \eqref{eqn:limit-PoE-formalism} in Formalism \ref{fm:post_mean_zero}.

\vskip0.5cm
\subsection{Intuitions of Claim \ref{clm:replica}}
 
We are thus left to give the intuitions for Claim \ref{clm:replica}. To calculate the left hand side of Eq. \eqref{eqn:replica_goal}, we need to first calculate $\phi(\lambda, N) = \lim_{d \to \infty} \E[\log Z_n(\lambda, N)]$. We calculate $\phi(\lambda, N)$ using the replica trick $\E[\log Z]=\lim_{k\rightarrow0}\log\E[Z^k]/k$ \cite{mezard2009information}, and using a heuristic exchange of limits $d \to \infty$ and $k \to 0$. The calculation of $\lim_{N \to \infty} \partial_\lambda \phi(\lambda,N) |_{\lambda=0}$ is thus divided into three steps as below. 
\begin{enumerate}
\item[S1. ] The $d\rightarrow\infty$ limit. For fixed integer $k$, $N$, and scalar $\lambda \in \R$, we calculate
\begin{equation}\label{eqn:define_S}
S(k,\lambda,N)\equiv \lim_{d\rightarrow\infty} \frac1d \log\E_{\Xb,\beps}\big[ Z_n(\lambda,N)^k\big].
\end{equation}
\item[S2. ] The $k\rightarrow0$ limit. For fixed integer $N$ and scalar $\lambda \in \R$, we calculate
\begin{equation}\label{eqn:formula_phi_in_replica}
\phi(\lambda,N)=\lim_{k\rightarrow0}\Big[\frac1k S(k,\lambda,N)\Big].
\end{equation}
\item[S3. ] The $\lambda$ differentiation. We calculate the derivative with respect to $\lambda$, and take $N \to \infty$
\begin{equation}\label{eqn:define_s_star_in_replica}
\psi_*\equiv \lim_{N\rightarrow\infty}\partial_\lambda\phi(\lambda,N)\big|_{\lambda=0}.
\end{equation}

\end{enumerate}

\noindent
\textbf{Step S1. The $d\rightarrow\infty$ limit.} Throughout the rest of this section, the indices $a,b,i$ under the summation or product operators run over $a\in[k]$, $b\in[N]$, and $i\in[d]$. For example, we write $\sum_{a,b}=\sum_{a \in [k],b \in [N]}$ in short. We start with calculating $\E_{\Xb,\beps}[ Z_n(\lambda,N)^k ]$. Recall that Assumption \ref{ass:Bayesian_linear_model} gives $\beta_{0, j} \sim_{i.i.d.} \Pi$, $x_{ij}\sim_{i.i.d.}\cN(0,1/n)$, and $\eps_i\sim_{i.i.d.}\cN(0,\sigma^2)$, for $i \in [n]$ and $j\in [d]$. Using the fact that $(\int_{\R^{d N}} f(\bar \bbeta) \Pi(\bar \bbeta) )^k = \int_{\R^{d N k}} \prod_{a} f(\{ \bbeta^{(a | b)} \}_b) \prod_{a,b} \Pi(\de \bbeta^{(a|b)})$, we obtain
\begin{equation}\label{eqn:EZk_proof}
\begin{aligned}
\E_{\Xb,\beps}[Z_n^k]=&\E_{\Xb,\beps}\int_{\R^{d\times k\times N}} \exp\Bigg\{-\frac1{\sigma^2}\sum_{a,b}\lVert \Yb-\Xb\bbeta^{(a|b)}\rVert_2^2/2+\lambda \sum_{i,a}\psi\bigg(\Big(\frac1N\sum_{b}\beta_i^{(a|b)}\Big),\beta_{0,i}\bigg)\Bigg\}\prod_{a,b} \Pi\big(\de\bbeta^{(a|b)}\big)\\
=&\int_{\R^{d\times k\times N}}\underbrace{\E_{\Xb,\beps}\Big[\exp\big\{-\frac1{\sigma^2}\sum_{a,b}\lVert \Yb-\Xb\bbeta^{(a|b)}\rVert_2^2/2\big\}\Big]}_{E(\bbeta_\square)}\times\exp\Big\{\lambda \sum_{i,a}\psi\big(\frac1N\sum_{b}\beta_i^{(a|b)},\beta_{0,i}\big)\Big\}\prod_{a,b} \Pi\big(\de\bbeta^{(a|b)}\big), 
\end{aligned}
\end{equation}
where we denote $\bbeta_\square = \{ \{ \bbeta^{(a | b)}\}_{a, b}, \bbeta_0 \}$,
We simplify $E(\bbeta_\square)$ as follows: 
\begin{align*}
E(\bbeta_\square)=&~\E_{\Xb,\beps}\Big[\exp\big\{-\frac1{2\sigma^2}\sum_{a,b}\lVert \Yb-\Xb\bbeta^{(a|b)}\rVert_2^2 \big\}\Big]\\
%=&~\E_{\Xb,\beps}\Big[\exp\big\{-\frac1{2\sigma^2}\sum_{a,b}\lVert \beps-\Xb(\bbeta_0-\bbeta^{(a|b)}) \rVert_2^2 \big\}\Big]\\
=&~ \E_{\Xb}\int_{\R^n}\frac1{(2\pi)^{n/2}\sigma^n}\exp\Big\{-\frac1{2\sigma^2} kN\lVert\bepsilon\rVert_2^2 -\frac{\lVert\bepsilon\rVert_2^2}{2\sigma^2}+\frac1{\sigma^2} \Big\langle\bepsilon,\sum_{a,b}\Xb(\bbeta_0-\bbeta^{(a|b)})\Big\rangle\Big\}\de\bepsilon\\
&~ \times \exp\Big\{-\frac1 {2\sigma^2}\sum_{a,b}\lVert\Xb(\bbeta_0-\bbeta^{(a|b)})\rVert_2^2\Big\}\\=&~
\underbrace{(kN+1)^{-\frac n2}}_{c_n}\cdot\E_\Xb\Big[ \exp\Big\{\frac1{2\sigma^4}\Big\lVert\sum_{a,b} \Xb(\bbeta_0-\bbeta^{(a|b)})\Big\rVert_2^2\bar\sigma^2-\frac1{2\sigma^2}\sum_{a,b}\Big\lVert\Xb(\bbeta_0-\bbeta^{(a|b)})\Big\rVert_2^2\Big\} \Big]\\
=&
~ c_n \E_{\bx \sim \cN(\bzero, \frac1n \id_d)} \Big[ \exp\Big\{\frac1{2\sigma^4} \Big(\sum_{a,b} \bx^\top(\bbeta_0-\bbeta^{(a|b)}) \Big)^2\bar\sigma^2-\frac1{ 2\sigma^2}\sum_{a,b}\Big(\bx^\top(\bbeta_0-\bbeta^{(a|b)})\Big)^2 \Big\}   \Big]^n,
\end{align*}
where we denote $\bar\sigma^2= \sigma^2/(1+Nk)$ and $c_n = (kN+1)^{-\frac n2}$.

To further simplify the expression above, we define the overlaps of $\bbeta_\square$ given by ($\ones_{kN} \in \R^{kN}$ is the all one vector)
\begin{align*}
\bar\bQ(\bbeta_\square)=&~\Big(\langle \bbeta^{(a|b)},\bbeta^{(a'|b')}\rangle/d\Big)_{(a|b),(a'|b')\in[k]\times[N]} \in \R^{kN \times kN},\\
\bar\bmu(\bbeta_\square)=&~\Big(\langle\bbeta^{(a|b)},\bbeta_0\rangle/d\Big)_{(a|b)\in[k]\times[N]} \in \R^{k N},\\
p(\bbeta_0) =&~\| \bbeta_0 \|_2^2/d \to p \equiv \E_{\beta \sim \Pi}[ \beta^2]  \in \R, \\
\Sigma(\bar\bQ(\bbeta_\square),\bar\bmu(\bbeta_\square))=&~ \Big(\bar\bQ(\bbeta_\square)-\bar\bmu(\bbeta_\square)\ones_{kN}^\top-\ones_{kN}\bar\bmu(\bbeta_\square)^\top+p(\bbeta_0) \ones_{kN} \ones_{kN}^\top\Big)/\delta \in \R^{kN \times kN}. 
\end{align*}
Now for a Gaussian random vector $\bx\sim\cN(0,(1/n) \id_d)$, we define $G_{(a|b)}=\bx^T(\bbeta_0-\bbeta^{(a|b)})$. Then these $\{ G_{(a|b)} \}_{a, b}$ are multi-variate Gaussian variables with mean $0$ and covariance $\E[G_{(a|b)} G_{(a'|b')}]=(\bbeta_0-\bbeta^{(a|b)})^\top(\bbeta_0-\bbeta^{(a'|b')})/n = \Sigma(\bar\bQ(\bbeta_\square),\bar\bmu(\bbeta_\square))_{(a|b),(a'|b')}$. 

Then we can further simplify $E(\bbeta_\square)$ as follows
\begin{align*}
E(\bbeta_\square)=&
c_n \bigg[ \int \frac{1}{(2\pi)^{kN/2} \det(\Sigma)^{kN/2}}\exp\Big\{\frac1{2\sigma^4}\Big(\sum_{a,b} G_{(a|b)} \Big)^2\bar\sigma^2-\frac1{2\sigma^2}\sum_{a,b} G_{(a|b)}^2- \bG^T\Sigma^{-1}\bG/2\Big\}\de\bG  \bigg]^n\\=&
c_n\big[ \det(\Sigma^{-1}+\id_{kN}/\sigma^2 -\bar\sigma^2 \ones_{kN} \ones_{kN}^\top)\det(\Sigma) /\sigma^4 \big]^{-\frac n2}. 
\end{align*}
Note that $E(\bbeta_\square)$ depend on $\bbeta_\square$ only through $\Sigma(\bar\bQ(\bbeta_\square),\bar\bmu(\bbeta_\square))$. Thus defining 
\begin{equation}\label{eqn:def_Eng}
    \begin{aligned}
       \bar E (\bQ, \bmu) = c_n\big[ \det(\Sigma(\bQ,\bmu)^{-1}+\id_{kN}/\sigma^2 -\bar\sigma^2 \ones_{kN} \ones_{kN}^\top)\det(\Sigma(\bQ,\bmu)) /\sigma^4 \big]^{-\frac n2} , 
    \end{aligned}
\end{equation}
we have $E(\bbeta_\square)=\bar E (\bar\bQ(\bbeta_0),\bar\bmu(\bbeta_0))$. 

Now plugging the expression of $E(\bbeta_\square)$ into Eq. \eqref{eqn:EZk_proof} and using the delta identity formula $1=\int \delta(\bar\bQ-\bQ)\delta(\bar\bmu-\bmu)\de\bQ \de\bmu$, we have
\begin{equation}
    \begin{aligned}
\E_{\Xb,\beps}[Z_n^k]=&\int_{\R^{d\times k\times N}} \bar E(\bar\bQ(\bbeta_0),\bar\bmu(\bbeta_0))\times\exp\Big\{\lambda \sum_{i,a}\psi\Big(\frac1N\sum_b g( \beta_i^{(a|b)}),\beta_{0,i}\Big)\Big\}\prod_{a,b}\Pi\big(\de\bbeta^{(a|b)}\big)\\=&
\int \de\bQ \de\bmu \bar E(\bQ,\bmu)\times\text{Ent}(\bQ,\bmu),
\end{aligned}
\end{equation}
where 
$$\text{Ent}(\bQ,\bmu)=\int \delta(\bar\bQ-\bQ)\delta(\bar\bmu-\bmu)\times \exp\Big\{\lambda\sum_{i,a}\psi\Big(\frac1N \sum_b g(\beta_i^{(a|b)}),\beta_{0,i}\Big)\Big\}\prod_{a,b}\de\Pi\big(\de\bbeta^{(a|b)}\big). $$
Using the Laplace method $\lim_{n\to\infty}\frac1n\log\int \exp\{n f_n(\bQ, \bmu)\}\de \bQ \de \bmu =\sup_{\bQ, \bmu} \lim_{n\to\infty}f_n(\bQ, \bmu)$, we have
\begin{equation}\label{eqn:Z_to_E_Ent}
S(k,\lambda,N)\equiv \lim_{d\rightarrow\infty} \frac1d \log\E_{\Xb,\beps}\big[ Z_n(\lambda,N)^k\big] = \sup_{\bQ,\bmu} \Big[\lim_{d \to \infty} \frac{1}{d}\log \bar E(\bQ,\bmu) + \frac{1}{d} \log \text{Ent}(\bQ,\bmu)\Big]. 
\end{equation}

It is straightforward to calculate $\lim_{d \to \infty} d^{-1} \log \bar E(\bQ, \bmu)$. Denoting
\begin{equation}\label{eqn:eng_expand}
    \begin{aligned}
e(\bQ,\bmu):=-\frac\delta 2\log(1+kN)-\frac\delta 2 \log\det[\id_{kN}+(\bQ-\bmu\ones^\top-\ones\bmu^\top+p\ones \ones^\top)( \id_{kN}/\sigma^2-\bar\sigma^2 \ones \ones^\top/\sigma^4)].
\end{aligned}
\end{equation}
Then it is straightforward to see that
\begin{equation}\label{eqn:def_ent_e}
    \begin{aligned}
        e(\bQ,\bmu):=\lim_{d \to \infty} \frac{1}{d} \log \bar E(\bQ, \bmu). 
    \end{aligned}
\end{equation}

To calculate $\lim_{d \to \infty} d^{-1} \log \text{Ent}(\bQ,\bmu)$, using the delta identity formula
\[
\delta(\bar\bQ-\bQ)\delta(\bar\bmu-\bmu) =\int \exp\{i \cdot d \cdot(\langle \br ,\bar\bQ-\bQ\rangle + \< \bxi, \bar \bmu - \bmu\> ) \}\de (\br/(2\pi)) \de (\bxi/(2\pi)),
\]
and saddlepoint approximation (here $\text{ext}$ stands for the extremum)
\[
\lim_{n\to\infty}\frac1n\log\int_\R\exp\{n f_n(i\br, i \bxi)\}\de\lambda=\text{ext}_{\br \in \C^{kN \times kN}, \bxi \in \R^{kN}}\lim_{n\to\infty}f_n(\br, \bxi),
\]
we have
\begin{align*}
\text{Ent}(\bQ,\bmu) \asymp &~\text{ext}_{\br, \bxi}\int_{\R^{d\times k\times N}}\exp\Big\{ -\sum_{a,b,a',b'}r_{(a|b),(a'|b')}(d \cdot Q_{(a|b),(a'|b')}-\langle\bbeta^{(a|b)},\bbeta^{(a'|b')}\rangle)/2-\\&\sum_{a,b}\xi_{(a|b)}(d\cdot\mu_{(a|b)}-\langle\bbeta^{(a|b)},\bbeta_0\rangle)+\lambda \sum_{i,a}\psi\Big(\frac1N\sum_b g(\beta_i^{(a|b)}),\beta_{0,i}\Big)\Big\}\prod_{a,b}\Pi\big(\de\bbeta^{(a|b)}\big)\\\asymp &~ 
\text{ext}_{\br, \bxi}\Big[ \prod_{i=1}^d\Big( \int_{\R^{k\times N}}\exp\Big\{-\sum_{a,b,a',b'}r_{(a|b),(a'|b')}(Q_{(a|b),(a'|b')}-\beta^{(a|b)}\beta^{(a'|b')})/2\\&-\sum_{a,b}\xi_{(a|b)}(\mu_{(a|b)}-\beta^{(a|b)}\beta_{0, i})+
\lambda\sum_{a}\psi\Big(\frac1N\sum_b g(\beta^{(a|b)}),\beta_{0, i} \Big)\Big\}\prod_{a,b}\Pi\big(\de \beta^{(a|b)}\big) \Big) \Big].
\end{align*}
Then by the law of large numbers uniform in $(\br, \bxi)$, and recall that $(\beta_{0, i}) \sim_{i.i.d.} \Pi$, we have 
\begin{equation}\label{eqn:def_ent}
\begin{aligned}
\lim_{d\rightarrow\infty}\frac 1d \log\text{Ent}(\bQ,\bmu) = &~ \text{ext}_{\br, \bxi}  \text{ent}(\bQ,\bmu,\br,\bxi), 
\end{aligned}
\end{equation}
where
\begin{equation}\label{eqn:ent_expand}
\begin{aligned}
 \text{ent}(\bQ,\bmu,\br,\bxi) =&~ \E_{\beta_0 \sim \Pi}\Big[\log\int_{\R^{k\times N}}\exp\Big\{-\sum_{a,b,a',b'}r_{(a|b),(a'|b')}(Q_{(a|b),(a'|b')}-\beta^{(a|b)}\beta^{(a'|b')})/2\\-&\sum_{a,b}\xi_{(a|b)}(\mu_{(a|b)}-\beta^{(a|b)}\beta_0)+\lambda\sum_{a}\psi\Big(\frac1N\sum_{b}g(\beta^{(a|b)}),\beta_0\Big) \Big\}\prod_{a,b}\Pi\big(\de\beta^{(a|b)}\big) \Big]. 
\end{aligned}    
\end{equation}
Therefore, combining Eq. (\ref{eqn:Z_to_E_Ent}), (\ref{eqn:def_ent_e}), (\ref{eqn:def_ent}), we conclude that $S(k,\lambda,N)$ as defined in Eq. (\ref{eqn:define_S}) gives
\begin{equation}\label{eqn:S_to_E_Ent}
\begin{aligned}
S(k,\lambda,N) =&~ \text{ext}_{\bQ,\bmu} \Big[ \lim_{d \to \infty}\frac{1}{d}\log \bar E(\bQ,\bmu) + \lim_{d \to \infty}\frac{1}{d} \log \text{Ent}(\bQ,\bmu)\Big]\\
=&~ \text{ext}_{\bQ,\bmu,\br,\bxi}[e(\bQ,\bmu)+\text{ent}(\bQ,\bmu,\br,\bxi)],
\end{aligned}
\end{equation}
where $e$ and $\text{ent}$ are as defined in Eq. \eqref{eqn:eng_expand} and \eqref{eqn:ent_expand} respectively. 
%where
%\begin{align*}
%\text{ent}(\bQ,\bmu,\br,\bxi):=& \E_{\beta_0}\Big[\log\int_{\R^{k\times N}}\exp\big\{-\sum_{a,b,a',b'}r_{(a|b),(a'|b')}(Q_{(a|b),(a'|b')}-\beta^{(a|b)}\beta^{(a'|b')})/2-\sum_{a,b}\xi_{(a|b)}(\mu_{(a|b)}-\beta^{(a|b)}\beta_0)\\+&\lambda\sum_{a}\psi\big(\frac1N\sum_{b}g(\beta^{(a|b)}),\beta_0\big) \big\}\prod_{a,b}\Pi\big(\de\beta^{(a|b)}\big) \Big],
%\end{align*}

\noindent\textbf{Step S2. The $k\rightarrow0$ limit. } We next calculate the $k \to 0$ limit in Eq. (\ref{eqn:formula_phi_in_replica}). The difficulty lies in that the dimension of $(\bQ,\bmu,\br,\bxi)$ depends on $k$. Following the replica trick in statistical physics, we use the replica symmetric ansatz to simplify the expression of $S$, and then calculate the $k \to 0$ limit. Using the replica symmetric ansatzs, we assume that the variables $(\bQ, \bmu, \br, \bxi)$ achieving the extremum of Eq. (\ref{eqn:S_to_E_Ent}) are replica symmetric in the following sense: there exists variables $(q_0, q_1, q_2, r_0, r_1, r_2, \mu, \xi)$ such that $\bQ$ and $\br$ have the block form (where each block is of size $k \times k$)
\begin{equation}\label{eqn:matrix_Q_r}
\bQ=\begin{pmatrix}
 \begin{matrix}q_1&&q_0\\&\ddots&\\q_0&&q_1 \end{matrix} & \rvline  &q_2&\rvline&\cdots&\rvline&q_2\\
 \hline
 q_2 &\rvline&\begin{matrix}q_1&&q_0\\&\ddots&\\q_0&&q_1 \end{matrix} & \rvline &\cdots&\rvline&q_2\\
 \hline
 \vdots&\rvline&\vdots&\rvline&\ddots&\rvline&\vdots\\
 \hline
 q_2 & \rvline  &q_2&\rvline&\cdots&\rvline&\begin{matrix}q_1&&q_0\\&\ddots&\\q_0&&q_1 \end{matrix}
\end{pmatrix},\br=\begin{pmatrix}
 \begin{matrix}r_1&&r_0\\&\ddots&\\r_0&&r_1 \end{matrix} & \rvline  &r_2&\rvline&\cdots&\rvline&r_2\\
 \hline
 r_2 &\rvline&\begin{matrix}r_1&&r_0\\&\ddots&\\r_0&&r_1 \end{matrix} & \rvline &\cdots&\rvline&r_2\\
 \hline
 \vdots&\rvline&\vdots&\rvline&\ddots&\rvline&\vdots\\
 \hline
 r_2 & \rvline  &r_2&\rvline&\cdots&\rvline&\begin{matrix}r_1&&r_0\\&\ddots&\\r_0&&r_1 \end{matrix}
\end{pmatrix},
\end{equation}
and $\bmu$ and $\bxi$ have the following form
$$ \bmu= [\mu, \ldots, \mu]^\sT,~~~~~~~\bxi=[\xi, \ldots, \xi]^\sT. 
$$
We further reparametrize these variables and introduce $(q, w_1, w_2, \rho_1, \rho_2, \nu, \zeta)$ satisfying 
\begin{equation}
\begin{aligned}
&~q_2=q, ~~~~~~q_0=q_2+\sigma^2{w_2},~~~~~~ q_1=q_0+\sigma^2 {w_1}, \\
&~r_2=\frac1{\sigma^4}\rho_1,~~~~~~ r_0=r_2+\frac1{\sigma^4}\rho_2,~~~~~~ r_1-r_0=-\nu/\sigma^2,~~~~~~ \xi=\zeta/\sigma^2.
\end{aligned}
\end{equation}
Using this parametrization, take the $e$ function defined as in Eq. (\ref{eqn:eng_expand}), and take $k\to 0$ limit, we have
\begin{align}\label{eqn:def_ebar}
\lim_{k\rightarrow 0}\frac1k e(\bQ,\bmu) = \bar e(q,w_1,w_2,\mu) \equiv -\frac{\delta N}2\big[\log(1+\frac {w_1}\delta)+\frac{w_2}{\delta +w_1}+\frac{1}{\sigma^2(\delta+w_1)}(p-2\mu+q+\delta\sigma^2)\big].
\end{align}
Moreover, using this parameterization, the $\text{ent}$ function as defined in Eq. (\ref{eqn:def_ent}) gives 
\begin{align*}
\text{ent}(\bQ, \bmu, \br, \bxi)=& -\frac12\Big(N^2 k^2q\rho/\sigma^4 + Nk^2(\rho_1w_2/\sigma^2+\rho_2w_2/\sigma^2+\rho_2q/\sigma^4)\\
&+Nk(-\nu w_1-\nu w_2-\nu q/\sigma^2+ w_1\rho_1/\sigma^2+ w_1\rho_2/\sigma^2)\Big)-Nk\mu\zeta/\sigma^2
\\&+\E_{\beta_0}\bigg[ \log \int_{\R^{k\times N}}\exp\Big\{ \frac{\rho_1}{2\sigma^4}(\sum_{a,b}\beta^{(a|b)})^2+\frac{\rho_2}{2\sigma^4}\sum_b(\sum_{a}\beta^{(a|b)})^2-\frac{\nu}{2\sigma^2}\sum_{a,b}(\beta^{(a|b)})^2\\&+\frac\zeta{\sigma^2}\sum_{a,b}\beta^{(a|b)}\beta_0 +\lambda\sum_{a}\psi\Big(\frac1N\sum_b g(\beta^{(a|b)}),\beta_0\Big) \Big\}\prod_{a,b} \Pi\big(\de\beta^{(a|b)}\big)\bigg].
\end{align*}
To further simplify the equation above, we use the fact that for $G\sim\cN(0,1)$, $\E[\exp(\lambda G)]=\exp(\lambda^2/2)$. So that we introduce Gaussian random variables $G_0,\ldots,G_n \sim_{i.i.d.} \cN(0,1)$, then the $\E_{\beta_0}[\cdot]$ part of the equation above becomes
\begin{align*}
&\E_{\beta_0}\bigg[\log\E_{G_0,\ldots,G_N} \Big[\int_{\R^{k\times N}} \exp\big\{ \frac{\sqrt{\rho_1}}{\sigma^2}\sum_{a,b}\beta^{(a|b)}G_0+\frac{\sqrt{\rho_2}}{\sigma^2}\sum_b(\sum_a \beta^{(a|b)})G_b
\\&- \frac\nu{2\sigma^2}\sum_{ab}(\beta^{(a|b)})^2+\frac\zeta{\sigma^2}\sum_{a,b}\beta^{(a|b)}\beta_0+\lambda\sum_a\psi\big(\frac1N\sum_b g(\beta^{(a|b)}),\beta_0\big)\big\}\prod_{a,b} \Pi\big(\de\beta^{(a|b)}\big)\Big] \bigg]
\\=&\E_{\beta_0}\bigg[\log  \E_{G_0,\ldots,G_N}  \Big\{ \Big[\int_{\R^N}\exp\big\{ \frac1{\sigma^2}\sum_b \beta^b(\sqrt{\rho_1}G_0+\sqrt{\rho_2}G_b)-\frac{\nu}{2\sigma^2} \sum_{b}(\beta^b)^2
\\&+\frac\zeta{\sigma^2}\sum_b \beta^b \beta_0+\lambda \psi\big(\frac1N\sum_b g(\beta^b),\beta_0\big)\big\} \prod_{b} \Pi\big(\de\beta^{b}\big) \Big]^k \Big\}  \bigg]. 
\end{align*}
To take the $k \to 0$ limit, using the replica formula $\E[\log Z] = \lim_{k \to 0} (1/k) \log \E[Z^k]$ in a reverse way, we obtain
\begin{equation}\label{eqn:def_entbar}
\begin{aligned}
\lim_{k\rightarrow0}\frac1k\text{ent}=&~ \overline{\text{ent}}(q,w_1,w_2,\mu,\rho_1,\rho_2,\nu,\zeta)\\
\equiv&~ N[-\frac12(-\nu w_1-\nu w_2-\nu/\sigma^2 q+w_1\rho_1/\sigma^2+ w_1\rho_2/\sigma^2)-\mu\zeta/\sigma^2]\\&+\E_{\beta_0,G_0,\ldots,G_N}\bigg[\log\int_{\R^N}\exp\big\{ \frac1{\sigma^2}\sum_b \beta^b(\sqrt{\rho_1}G_0+\sqrt{\rho_2}G_b)-\frac{\nu}{2\sigma^2} \sum_{b}(\beta^b)^2\\&+\frac\zeta{\sigma^2}\sum_b \beta^b\beta_0+\lambda \psi\big(\frac1N\sum_b g(\beta^b),\beta_0\big) \big\} \prod_{b} \Pi\big(\de\beta^{b}\big)  \bigg].
\end{aligned}
\end{equation}
Then, combining Eq. \eqref{eqn:formula_phi_in_replica}, \eqref{eqn:S_to_E_Ent}, \eqref{eqn:def_ebar}, and \eqref{eqn:def_entbar}, we have
\[
\begin{aligned}
\phi(\lambda,N)=&~ \text{ext}_{q,w_1,w_2,\mu,\rho_1,\rho_2,\nu,\zeta}[\overline{e}(q,w_1,w_2,\mu)+\overline{\text{ent}}(q,w_1,w_2,\mu,\rho_1,\rho_2,\nu,\zeta) ] \\
=&~ \text{ext}_{q,w_1,w_2,\mu,\rho_1,\rho_2,\nu,\zeta}\Big\{  -\frac N{\sigma^2}\Big[\frac{\delta\sigma^2}{2}\big(\log(1+\frac{w_1}{\delta})+\frac{w_2}{\delta+w_1}\big)+\frac{\delta}{2(\delta+w_1)}(p-2\mu+q+\delta\sigma^2)\\
&~ +\frac{\sigma^2}2(-\nu w_1-\nu w_2) +\frac12(w_1\rho_1+w_1\rho_2-\nu q)+\mu\zeta \Big] \\
&~ +
\E_{\beta_0,G_0,\ldots,G_N}\Big[\log\int_{\R^N}\exp\big\{\frac1{\sigma^2}\big(\sum_b \beta^b(\sqrt{\rho_1} G_0+\sqrt{\rho_2}G_b+\zeta  \beta_0)-\frac12 \nu \sum_b(\beta^b)^2\big)\\
&~+\lambda\psi\big(\frac1N\sum_b g(\beta^b),\beta_0\big) \big\} \prod_{b} \Pi\big(\de\beta^{b}\big) \Big] \Big\}. 
\end{aligned}
\]
Taking derivatives with respect to $(q,\mu)$ in the equation above and setting them to be zero, we obtain that the extremum will take place at $\delta/(\delta+w_1)=\nu=\zeta$. Plugging in this equality, we get a simplified equation for $\phi(\lambda, N)$
\begin{equation}\label{eqn:phi_expression-in-proof}
\begin{aligned}
 \phi(\lambda,N)=&~\text{ext}_{\rho_1,\rho_2,\nu} -\frac{N}{\sigma^2}\Big[\frac\nu2(p+\delta\sigma^2)-\frac{\delta\sigma^2}{2}\log\nu +\frac\delta2(\frac1\nu-1)(\rho_1+\rho_2-\nu\sigma^2)\Big]
\\
&~+\E_{\beta_0,G_0,\ldots,G_N}\Big[\log\int_{\R^N}\exp\Big\{\frac1{\sigma^2}\big(\sum_b \beta^b(\sqrt{\rho_1} G_0+\sqrt{\rho_2}G_b+\nu \beta_0)-\frac12 \nu \sum_b(\beta^b)^2\big)\\
&~+\lambda\psi\Big(\frac1N\sum_b g(\beta^b),\beta_0\Big)\Big\} \prod_{b} \Pi\big(\de\beta^{b}\big)  \Big]. 
\end{aligned}
\end{equation}

\noindent \textbf{Step S3. The $\lambda$ differentiation.} 
We finally calculate the $\lambda$ differentiation as in Eq. (\ref{eqn:define_s_star_in_replica}). Using Danskin's theorem, we get
\begin{align}\label{eqn:psi_star_in_formalism1}
\partial_\lambda\phi(\lambda,N)\big|_{\lambda=0}=\E_{\beta_0,\what \beta^1,\ldots,\what \beta^N,G_0,\ldots,G_N} \psi\big(\frac1N\sum_b g(\what \beta^b),\beta_0\big),
\end{align}
where 
\begin{equation}
(\beta_0, G_0, \ldots, G_N) \sim \Pi \times \cN(0, 1)^{\otimes (N+1)},
\end{equation} and
\begin{equation}\label{eqn:conditional_beta_distribution_in-justrificaiton}
(\what \beta^1,\ldots,\what \beta^N)|_{\beta_0,G_0,\ldots,G_N} \sim {\bar \mu}(\de \bar \bbeta) \propto\exp\Big\{-\frac{\nu}{2\sigma^2}\sum_b\Big[\beta^b-(\beta_0+\frac{\sqrt{\rho_1}}{\nu}G_0+\frac{\sqrt{\rho_2}}{\nu}G_b)\Big]^2\Big\}\Pi\big(\de\bar\bbeta\big),
\end{equation}
with $(\rho_1,\rho_2, \nu)$ satisfying the following self-consistent equation, which is obtained by setting stationary of the objective in Eq. \eqref{eqn:phi_expression-in-proof} with respect to $(\rho_1, \rho_2, \nu)$
\begin{align}&N\Big[\frac12(p+\delta\sigma^2)-\frac{\delta\sigma^2}{2\nu}-\frac{(\rho_1+\rho_2)\delta}{2\nu^2}+\frac12\delta\sigma^2\Big]+\E\Big[\sum_b\Big(\frac12 (\what \beta^b)^2-\beta_0\what \beta^b \Big) \Big]=0, \label{eqn:scf1-in-proof}\\
&\frac N2\delta(\frac1\nu-1)=\frac1{2\sqrt{\rho_2}}\E\Big[\sum_b \what \beta^b G_b\Big],\label{eqn:scf2-in-proof}\\
&\frac N2\delta(\frac1\nu-1)=\frac1{2\sqrt{\rho_1}}\E\Big[\sum_b \what \beta^b G_0\Big].\label{eqn:scf3-in-proof}
\end{align}
Here the expectations in these equations are taken with respect to $(\beta_0,\what \beta^1,\ldots,\what \beta^N,G_0,\ldots,G_N)$.
Note that we have $p = \E_{\beta \sim \Pi}[\beta^2]$. Thus Eq. \eqref{eqn:scf1-in-proof} can be simplified as \begin{equation}\label{eqn:scf-in-proof-reform1}
\delta(\frac{\rho_1+\rho_2}{\nu^2}-\sigma^2)+\delta(\frac1\nu-1)\sigma^2=\E_{\beta_0,G_0,\ldots,G_N}\Big[\frac1N\sum_b \langle(\beta^b-\beta_0)^2 \rangle_{{\bar \mu}} \Big].
\end{equation}
To simplify Eq. \eqref{eqn:scf2-in-proof} and \eqref{eqn:scf3-in-proof}, using Gaussian integration by parts, we have
$$\E\Big[\sum_b \what \beta^b G_b\Big]=\frac{\sqrt{\rho_2}}{\sigma^2}\E_{\beta_0,G_0,\ldots,G_N}\Big[\sum_b\langle (\beta^b)^2\rangle_{\bar \mu}-\langle \beta^b\rangle_{\bar \mu}^2 \Big],$$
$$\E\Big[\sum_b \what \beta^b G_0\Big]=\frac{\sqrt{\rho_1}}{\sigma^2}\E_{\beta_0,G_0,\ldots,G_N}\Big[\sum_b\langle (\beta^b)^2\rangle_{\bar \mu}-\langle \beta^b\rangle_{\bar \mu}^2 \Big].$$
Plugging this into Eq. \eqref{eqn:scf2-in-proof} and \eqref{eqn:scf3-in-proof} gives
\begin{equation}\label{eqn:scf-in-proof-reform2}
\delta(\frac1\nu -1) \sigma^2 =\E_{\beta_0,G_0,\ldots,G_N}\Big[\frac1N\sum_b\langle (\beta^b)^2\rangle_{\bar \mu}-\langle \beta^b\rangle_{\bar \mu}^2 \Big].
\end{equation}
Define $\tau^2 = (\rho_1 + \rho_2)/\nu^2$. Combining Eq. \eqref{eqn:scf-in-proof-reform1} and \eqref{eqn:scf-in-proof-reform2} gives 
$$
\delta(\tau^2-\sigma^2)=\E_{(\beta_0,G)\sim\Pi\times\cN(0,1)}[(\E[\beta_0|\beta_0+\tau G]-\beta_0)^2]. 
$$
This gives the same equation as Eq. (\ref{eqn:self_consistent}), and we denote its fixed point to be $\tau_\star^2$. Furthermore, when $\lambda = 0$, we can see that $(\rho_2)_\star = 0$ leads to a solution of the fixed point equation. Then by Eq. \eqref{eqn:scf-in-proof-reform2} we further get $\nu_\star/\sigma^2 = \tau_\star^2$. Therefore, the distribution of $(\what \beta^1, \ldots, \what \beta^N) |_{\beta_0, G_0, G_1, \ldots, G_N}$ as in Eq. \eqref{eqn:conditional_beta_distribution_in-justrificaiton} becomes conditionally independent
\begin{equation}\label{eqn:conditional_beta_distribution_in-justrificaiton2}
(\what \beta^1,\ldots,\what \beta^N)|_{\beta_0,G_0} \sim {\bar \mu}(\de \bar \bbeta) \propto \prod_b \exp\Big\{-\frac{1}{2\tau_\star^2}\Big[\beta^b-(\beta_0+ \tau_\star  G_0) \Big]^2\Big\}\Pi\big(\de \beta^b \big). 
\end{equation}
That is, we have $(\what \beta^1,\ldots,\what \beta^N) |_{\beta_0, G_0} \sim_{i.i.d.} \cL_{(\beta, Z) \sim \Pi \times \cN(0, 1)}(\beta| \beta+\tau_\star Z = \beta_0 + \tau_\star G_0)$. 

% \sm{I am here} 

% Choose $\rho_1=0$ \tj{I checked both derivation and gaussian integration by parts, and they both look correct...} and make a change of variable $\tau^2=\frac{\rho_2}{\nu^2}$, then the two self-consistent equations can be combined into
% $$\delta(\tau^2-\sigma^2)=\E_{(\beta_0,G)\sim\Pi\times\cN(0,1)}[(\E[\beta_0|\beta_0+\tau G]-\beta_0)^2],$$
% which gives the same equation as Eq. (\ref{eqn:self_consistent}). We denote $\tau_\star$ to be the solution of this equation. 

We finally calculate the $N$ limit of Eq. (\ref{eqn:psi_star_in_formalism1}). By the conditional independence and identical distribution of $(\what \beta^1,\ldots,\what \beta^N) |_{\beta_0, G_0}$, using law of large numbers, taking $N \to \infty$ in Eq. (\ref{eqn:psi_star_in_formalism1}) gives
$$\psi_\star =\lim_{N\rightarrow\infty}\partial_\lambda \phi(\lambda, N) |_{\lambda = 0} = \E_{(\beta_0,G_0)\sim\Pi\times\cN(0,1)}\big[ \psi(\E[g(\beta_0)|\beta_0+\tau_\star G_0],\beta_0)\big].$$
This gives Eq. \eqref{eqn:replica_goal} of Claim \ref{clm:replica}. \qed
% Thus we conclude
% $$\lim_{d\rightarrow\infty}\frac1d\E_{\bX,\beps}\Big[\sum_{i=1}^d \psi(\langle g(\beta_{i}) \rangle_\mu,\beta_{0,i}) \Big] =\E_{(\beta_0,G)\sim\Pi\times\cN(0,1)}\big[ \psi(\E[g(\beta_0)|\beta_0+\tau G],\beta_0)\big]. $$
% Finally, taking $g(x)=x$, we have 
% \[
% \lim_{d\rightarrow\infty}\frac1d\E_{\bX,\beps}\Big[\sum_{i=1}^d \psi(\langle \beta_{i} \rangle_\mu,\beta_{0,i}) \Big] = \E_{(\beta_0,G)\sim\Pi\times\cN(0,1)}\big[ \psi(\cE(\beta_0+\tau G),\beta_0)\big].
% \]
% Taking $g(x)=\ones\{ x = 0\}$, we have 
% \[
% \lim_{d\rightarrow\infty}\frac1d\E_{\bX,\beps}\Big[\sum_{i=1}^d \psi(\langle \ones\{ \beta_{i} = 0\} \rangle_\mu,\beta_{0,i}) \Big] = \E_{(\beta_0,G)\sim\Pi\times\cN(0,1)}\big[ \PostProb(\beta_0+\tau G),\beta_0)\big].
% \]
% These are the desired results. 

\section{Intuitions of Conjecture \ref{conj:PoPCe_PoEdCe_power}}\label{sec:PoPCe_PoEdCe_power}

%We justify Conjecture \ref{conj:PoPCe_PoEdCe_power} by deriving Formalism \ref{fm:pvalue_permu},  \ref{fm:distill_s}, \ref{conj:marginal_nu}, \ref{fm:poedce2} which will be stated later. 

In this section, we provide the intuitions of Conjecture \ref{conj:PoPCe_PoEdCe_power}. We first present Formalism \ref{fm:pvalue_permu} and \ref{fm:poedce2} below, whose intuitions are contained in Section~\ref{sec:justification-crt-pvalues} and \ref{sec:justification-distilled} respectively. We remark that the intuitions of these formalisms are not rigorous proofs, and we leave rigorous proofs as future work. We will provide numerical verifications of these formalisms in Appendix \ref{sec:experiment-verification}. 

\begin{formalism}
\label{fm:pvalue_permu}
Let $(\Xb,\Yb)$ be generated from the Bayesian linear model (Assumption \ref{ass:Bayesian_linear_model}). For $j \in [d]$, let $ P_{ j}$ be the posterior probability of $\beta_{0,j} = 0$ given $(\Xb, \Yb)$, i.e., 
\begin{equation}
 P_{ j}(\Yb, \Xb) = \P(\beta_{0,j} = 0 \vert \Yb, \Xb).
\end{equation}
Let $\hat p_j$ be the {\CRT} p-value corresponding to $P_{j}(\Yb, \Xb)$ in the $K \to \infty$ limit (c.f. Line \ref{line_1_9} of Algorithm \ref{alg:popce}), i.e., 
\begin{equation}\label{eqn:definition-hat-pj-formalism2}
\hat p_j(\Yb, \Xb) = \P_{\txb_j \sim \cN(\bzero, (1/n)\id_n)}( P_{ j}(\Yb, \Xb)\ge P_{ j}(\Yb,  \Xb_{-j}, \txb_j) ).
\end{equation}
 Then for any sufficiently smooth function $\psi: \R \times \R \mapsto \R$, we have %\sm{The right hand side calculate the limit directly? }
% \begin{align*}
% \lim_{d\to \infty, n/d \to \delta} \frac{1}{d}\sum_{j = 1}^d \psi(\beta_{0, j}, \hat p_j) 
% =
% \lim_{d\to \infty, n/d \to \delta} \frac{1}{d}\sum_{j = 1}^d \psi\big(\beta_{0, j},\Psi[\PostProb(\beta_{0, j} + \tau_\star Z_j)]\big) ,
% \end{align*}
\begin{align}\label{eqn:formalism2equation}
\lim_{d\to \infty, n/d \to \delta} \frac{1}{d}\sum_{j = 1}^d \psi(\beta_{0, j}, \hat p_j(\Yb, \Xb)) 
=
\E_{(\beta_0, Z) \sim \Pi \times \cN(0, 1)}[ \psi(\beta_{0},\Psi[\PostProb(\beta_{0} + \tau_\star Z)] )] ,
\end{align}
where $\cP(\,\cdot \,) = \cP(\, \cdot \,; \Pi, \tau_\star)$ is as defined in Eq. \eqref{eqn:posterior_probability_low}, $\tau_\star$ is the unique minimizer to the potential $\phi$ in Eq.~\eqref{eqn:potential_effective_noise}, and $\Psi$ is the CDF of  $\PostProb(\tau_\star Z)$ when $Z \sim \cN(0, 1)$. 
%Then for any sufficiently nice function $\psi: \R^2 \to \R$, we have 
%\[
%\begin{aligned}
%\lim_{n, d \to \infty, n / d \to \delta} \frac{1}{d}\sum_{j = 1}^d \psi(\what \beta_{\Bayes, j}, \beta_{0, j}) =&~ \E_{(\beta_0, Z) \sim \Pi \times \cN(0, 1)}[\psi(\cE(\beta_0 + \tau_\star Z), \beta_0)], \\
%\lim_{n, d \to \infty, n / d \to \delta} \frac{1}{d}\sum_{j = 1}^d \psi(\hat P_{\Bayes, j}, \beta_{0, j}) =&~ \E_{(\beta_0, Z) \sim \Pi \times \cN(0, 1)}[\psi(\PostProb(\beta_0 + \tau_\star Z), \beta_0)]. \\
%\end{aligned}
%\]
\end{formalism}

\begin{formalism}\label{fm:poedce2}
Let $(\Xb,\Yb)$ be generated from the Bayesian linear model (Assumption \ref{ass:Bayesian_linear_model}). For $k \in [d]$, define $\< \cdot \>_{-k}$ to be the ensemble average over the leave-one-out distribution
\[
\mu_{-k}( \de \bbeta_{-k}) \propto \exp\Big\{ - \| \Yb - \Xb_{-k} \bbeta_{-k} \|_2^2 / (2 \sigma^2) \Big\} \prod_{j \neq k} \Pi(\de \beta_j). 
\]
We further define base statistics
\begin{equation}\label{eqn:definition_uk-formalism}
U_k(\Yb,  \Xb) = \cP[(\tau_\star^2 / \sigma^2 ) \< \Yb - \Xb_{-k} \< \bbeta_{-k} \>_{-k}, \xb_k\>].
\end{equation}
Then we let $\tp_j$ be the distilled {\CRT} p-value corresponding to $U_j$ in the $K \to \infty$ limit in Algorithm \ref{alg:poedce}, i.e., 
\begin{equation}\label{eqn:definition-tilde-p-formalism}
\tp_j(\Yb,  \Xb) = \P_{\tilde \xb_j \sim \cN(\bzero, (1/n)\id_n)}( U_j(\Yb,  \Xb) \ge U_j(\Yb,  \Xb_{-j}, \txb_j) ). 
\end{equation}
Then for any sufficiently smooth function $\psi: \R \times \R \mapsto \R$, we have %\sm{The right hand side calculate the limit directly? }
\begin{align}\label{eqn:formalism3equation}
\lim_{d\to \infty, n/d \to \delta} \frac{1}{d}\sum_{j = 1}^d \psi(\beta_{0, j}, \tp_j(\Yb, \Xb)) 
=
\E_{(\beta_0, Z) \sim \Pi \times \cN(0, 1)}[ \psi(\beta_{0},\Psi[\PostProb(\beta_{0} + \tau_\star Z)] )] ,%\lim_{d\to \infty, n/d \to \delta} \frac{1}{d}\sum_{j = 1}^d \psi\big(\beta_{0, j},\Psi[\PostProb(\beta_{0, j} + \tau_\star Z_j)]\big) ,
\end{align}
where $\cP(\,\cdot \,) = \cP(\, \cdot \,; \Pi, \tau_\star)$ is as defined in Eq. \eqref{eqn:posterior_probability_low}, $\tau_\star$ is the unique minimizer to the potential $\phi$ in Eq.~\eqref{eqn:potential_effective_noise}, and $\Psi$ is the CDF of $\PostProb(\tau_\star Z)$ when $Z \sim \cN(0, 1)$.
\end{formalism}

% The justifications of Formalism~\ref{fm:pvalue_permu} and \ref{fm:poedce2} are contained in Section~\ref{sec:justification-crt-pvalues} and \ref{sec:justification-distilled} respectively. We remark that these justifications are non-rigorous since we utilized several conjectured concentration results, which are verified experimentally (c.f. Appendix \ref{sec:experiment-verification}) but hard to be proved rigorously. We leave the rigorous proofs for future work.  %We hope that the justification and formalisms here provide the reader a roadmap towards a rigorous proof of Conjecture \ref{conj:PoPCe_PoEdCe_power}.

Now we use Formalism \ref{fm:post_mean_zero}, \ref{fm:pvalue_permu} and \ref{fm:poedce2} to show Conjecture \ref{conj:PoPCe_PoEdCe_power}. Here we focus on showing the asymptotic optimality of the {\PoPCe} procedure. The intuition for the {\PoEdCe} procedure is the same.

Recall that we have shown that {\CPoP} gives the largest {\mTPR} given {\BFDR} controlled at level $\alpha$ as in Proposition \ref{prop:Bayes_optimality}. We have also derived the limiting {\FDP} and {\TPP} curve of {\TPoP} and {\CPoP} as in Conjecture \ref{conj:Posterior_probability_curve}, and showed that {\TPoP} and {\CPoP} have the same asymptotic {\TPP} and {\FDP} with proper choice of parameters. Therefore, in order to show that {\PoPCe} asymptotically achieves the optimal {\mTPR} given {\BFDR} controlled at level $\alpha$, it suffices to show that {\PoPCe} has the same asymptotic {\mTPR} as the level-$\alpha$ {\TPoP} procedure (so that it has the same asymptotic {\mTPR} as the level-$\alpha$ {\CPoP} procedure and thus it approximately gives the largest {\mTPR} given {\BFDR} controlled at level $\alpha$). 

To show the asymptotic equivalence of {\PoPCe} and {\TPoP}, note that when $K_n \to \infty$, {\PoPCe} (Algorithm \ref{alg:popce}) is equivalent to the following procedure: if $\Psi(t_{\PoPCe}(\alpha - \eps)) \pi_0 d /  |\{j: \Psi^{-1}(\hat p_j) \le t_{\PoPCe}(\alpha - \eps) \}| < \alpha$, we reject the hypotheses $\{ j: \Psi^{-1}(\hat p_j) \le t_{\PoPCe}(\alpha - \eps) \}$, and otherwise we reject nothing. Here the truncation threshold $t_{\PoPCe}$ gives
\begin{equation}\label{eqn:tpopcealpha}
t_{\PoPCe}(\alpha) = \max\Big\{  s \in [0, 1]: \lim_{d\to \infty, n/d \to \delta}\FDP( \bT_P(s;\cdot) ; \Pi) \le \alpha \Big\}. 
\end{equation}
On the other hand, the {\TPoP} procedure (Eq. \eqref{eqn:def_T_P}) rejects $\{ j: P_j(\cD) \le t_{\TPoP}(\alpha)\}$, where 
\begin{equation}\label{eqn:ttpopalpha}
t_{\TPoP}(\alpha) \text{ is such that } \mFDR(\bT_P(t_{\TPoP}(\alpha); \cdot), \Pi) = \alpha. 
\end{equation}
By the concentration property of $\mFDR$ and by Eq. \eqref{eqn:tpopcealpha} and \eqref{eqn:ttpopalpha}, we have that $t_{\TPoP}(\alpha)$ and $t_{\PoPCe}(\alpha)$ are asymptotically the same. Furthermore, by Formalism \ref{fm:pvalue_permu}, we have that %\sm{Explain more this equation}
\[
\lim_{d \to \infty} \Psi(t_{\PoPCe}(\alpha - \eps)) \pi_0 d /  |\{j: \Psi^{-1}(\hat p_j) \le t_{\PoPCe}(\alpha - \eps) \}| = \alpha - \eps. 
\]
Since $\eps = \eps_n$ goes to $0$ slow enough, the inequality $\Psi(t_{\PoPCe}(\alpha - \eps)) \pi_0 d /  |\{j: \Psi^{-1}(\hat p_j) \le t_{\PoPCe}(\alpha - \eps) \}| < \alpha$ can be satisfied with high probability. Therefore, with high probability, {\PoPCe}  rejects the hypotheses $\{ j: \Psi^{-1}(\hat p_j) \le t_{\PoPCe}(\alpha - \eps) \}$. Finally, by Formalism \ref{fm:post_mean_zero} and \ref{fm:pvalue_permu}, for any test function $\psi$, we have
\[
\lim_{d\to \infty, n/d \to \delta} \frac{1}{d}\sum_{j = 1}^d \psi(\beta_{0, j}, P_j(\cD))  = \lim_{d\to \infty, n/d \to \delta} \frac{1}{d}\sum_{j = 1}^d \psi(\beta_{0, j}, \Psi^{-1}(\hat p_j)). 
\]
 The equality above implies that, in terms of limiting {\TPP} and {\FDP}, rejecting $P_j(\cD)$ below any threshold $t$ is equivalent to rejecting $\Psi^{-1}(\hat p_j)$ below the same threshold $t$. Since the rejection threshold $t_{\TPoP}(\alpha)$ of $P_j(\cD)$ in {\TPoP} are asymptotically the same as the rejection threshold $t_{\PoPCe}(\alpha - \eps)$ of $\Psi^{-1}(\hat p_j)$ in {\PoPCe} (module a small $\eps = \eps_n$ that goes to zero sufficiently slow), it follows that {\PoPCe} and {\TPoP} have asymptotically the same {\mTPR}. Finally, notice that Conjecture \ref{conj:Posterior_probability_curve} implies that {\CPoP} and {\TPoP} have asymptotically the same {\mTPR}, we have that {\PoPCe} and {\CPoP} also have asymptotically the same {\mTPR}. This gives Eq. (\ref{eqn:mTPR-T-C}) of Conjecture \ref{conj:PoPCe_PoEdCe_power}. The later statements of Conjecture \ref{conj:PoPCe_PoEdCe_power} follow immediately from Proposition \ref{prop:Bayes_optimality} and Lemma \ref{lem:bound}. 

We next provide the intuitions of Formalism~\ref{fm:pvalue_permu} and \ref{fm:poedce2}.

\subsection{Intuitions of Formalism \ref{fm:pvalue_permu}: Distribution of the CRT p-values}\label{sec:justification-crt-pvalues}~

Recall the definition of the {\CRT} p-value $\hat p_j$ as in Eq. \eqref{eqn:definition-hat-pj-formalism2}, and recall that $P_j(\Yb, \Xb) = \P(\beta_{0, j} = 0 | \Yb, \Xb)$ is the local fdr of the $j$-th hypothesis. Based on heuristic derivations and numerical simulations, we claim that the {\CRT} p-values $( \hat p_j(\Yb, \Xb) )_{j \in [d]}$ and the $\Psi$-transformed local fdrs $(\Psi( P_{ j}(\Yb,  \Xb) ))_{j \in [d]}$ are very close:
\begin{align}
&~\lim_{d\to\infty,n/d\to\delta} \frac{1}{d}\sum_{j = 1}^d \Big( \hat p_j(\Yb, \Xb)- \Psi( P_{ j}(\Yb,  \Xb) ) \Big)^2  = 0.   \label{eq:permu_distribution_1}
\end{align} 
Let us admit this claim for now. Moreover,  Formalism~\ref{fm:post_mean_zero} gives that, for any sufficiently smooth function $\psi: \R \times \R \mapsto \R$, 
\begin{align}
\lim_{d\to \infty, n/d \to \delta} \frac{1}{d}\sum_{j = 1}^d \psi(\beta_{0, j}, P_j(\Yb,\Xb))  = &~ \E_{(\beta_0,Z)\sim\Pi\times\cN(0,1)}[ \psi(\beta_0, \PostProb(\beta_{0} + \tau_\star Z) )]. 
\label{eq:formalism_1_revisit}
\end{align}
Therefore, for any sufficiently smooth function $\tilde\psi: \R \times \R \mapsto \R$,  taking $\psi(x,y)=\tilde\psi(x,\Psi(y))$ in Eq.~\eqref{eq:formalism_1_revisit} and combining it with the claimed Eq.~\eqref{eq:permu_distribution_1}, we get
 \begin{align*}
 &~\lim_{d\to \infty, n/d \to \delta} \frac{1}{d}\sum_{j = 1}^d \tilde\psi(\beta_{0, j}, \hat p_j(\Yb,\Xb))\\ 
=&~
\lim_{d\to \infty, n/d \to \delta} \frac{1}{d}\sum_{j = 1}^d \tilde\psi(\beta_{0, j}, \Psi(P_j(\Yb,\Xb))) 
= \E_{(\beta_0, Z) \sim \Pi \times \cN(0, 1)}[ \tilde\psi(\beta_{0},\Psi[\PostProb(\beta_{0} + \tau_\star Z)] )].
\end{align*}
This is the conclusion of Formalism \ref{fm:pvalue_permu}. 

We are thus left to provide intuitions for the claim as in Eq.~\eqref{eq:permu_distribution_1}. Note that $\hat p_j$ is given by 
\[
\hat p_j(\Yb, \Xb) = \P_{\txb_j \sim \cN(\bzero, (1/n)\id_n)}( P_{ j}(\Yb, \Xb)\ge P_{ j}(\Yb,  \Xb_{-j}, \txb_j) ).
\]
Therefore, to show that $\hat p_j(\Yb, \Xb) \approx \Psi(P_j(\Yb, \Xb))$, we just need to show that, the distribution of $P_{j}(\Yb,  \Xb_{-j}, \txb_j)$, conditional on $(\Yb,  \Xb)$, is approximately the same as $\cP(\tau_\star Z)$ when $Z \sim \cN(0, 1)$. 

By definition, we can rewrite $P_{ j}(\Yb,  \Xb_{-j}, \txb_j)$ as following: $$ P_{ j}(\Yb,  \Xb_{-j}, \txb_j)= \P(\beta_{0,j}=0|\Yb,\Xb_{-j},\txb_j)=\la \ones\{\beta_{j}=0 \}\ra_{\mu},$$ where 
\begin{align}
\mu\left(\mathrm{d} \beta_j, \mathrm{d} \bbeta_{-j}\right) 
&\propto   
\exp \left\{-\left\|\bY-\Xb_{-j}\bbeta_{-j}-\txb_j\beta_j \right\|_{2}^{2} /\left(2 \sigma^{2}\right)\right\} \Pi(\mathrm{d} \beta_j) \prod_{i\neq j} \Pi\left(\mathrm{d} \beta_{i}\right)\notag \\
&=\exp \left\{-\left\|\Xb_{-j}(\bbeta_{0,-j}-\bbeta_{-j})+\beps+\xb_j\beta_{0,j}-\txb_j\beta_j \right\|_{2}^{2} /\left(2 \sigma^{2}\right)\right\} \Pi(\mathrm{d} \beta_j) \prod_{i\neq j} \Pi\left(\mathrm{d} \beta_{i}\right).\label{eq:formalism_post_prob_measure}
\end{align}
Applying Formalism~\ref{conj:marginal_nu} below, we have
\begin{align}\lim _{d \rightarrow \infty, n / d \rightarrow \delta} \E_{\Xb,\txb_j,\beps,\bbeta_{0}}\Big(\la \ones\{\beta_j = 0\}\ra_\mu-\la \ones\{\beta_j = 0\} \ra_\nu \Big)^2=0, \label{eq:2_norm_conv}
\end{align}
where $\nu$ is a distribution over $\beta_j$, defined as 
\begin{align}
\nu(\mathrm{d} \beta_j)  &\propto \exp \left\{\frac{\beta_j \txb_j^{\top}\left[\Xb_{-j}\left(\bbeta_{0,-j}-\left\langle{\bbeta}_{-j}\right\rangle_{\mu_{-}}\right)+\tbeps\right]}{\sigma^2}-\frac{\left(\|\txb_j\|_{2}^{2}-\zeta_{n}^{2}/\sigma^2\right)}{2\sigma^2}\beta_j^{2}\right\} \Pi(\mathrm{d} \beta_j) \\
\propto&~ \P_{(\beta_j, Z) \sim \Pi \times \cN(0, 1)}\left(\beta_j\middle|\beta_j+\tau_j Z = \tau_j G_j \right). \label{eq:nu_one_dist}
\end{align}
In the above equation, we have
\begin{align}
\tbeps\equiv&~ \beps+\xb_j\beta_{0,j},\nonumber\\
\mu_{-}\left(\mathrm{d} {\bbeta}_{-j}\right) \propto&~ \exp \left\{-\left\|\Xb_{-j}{\bbeta}_{0,-j}+\tbeps-\Xb_{-j}{\bbeta}_{-j}\right\|_{2}^{2} /\left(2 \sigma^{2}\right)\right\} \prod_{i\neq j} \Pi\left(\mathrm{d} \beta_{-j, i}\right),\label{eq:mu_minus_dist}
\end{align}
$\< \cdot \>_{\mu_-}$ is the ensemble average with respect to $\mu_-$, and
\begin{align}
\zeta_n^2 \equiv&~ \|\Xb_{-j} (\bbeta_{0,-j}-\langle{\bbeta}_{-j}\rangle_{\mu_{-}})\|_{2}^{2} / n,\\
\tau_j \equiv&~ 1/\sqrt{({\|\txb_j\|_{2}^{2}-\zeta_n^2/\sigma^2})/\sigma^2},\nonumber\\
G_j \equiv &~ \frac{\txb_j^{\top}\left[\Xb_{-j}\left(\bbeta_{0,-j}-\langle{\bbeta}_{-j}\rangle_{\mu_{-}}\right)+\tbeps\right]}{\sigma \sqrt{\|\txb_j\|_{2}^{2}-\zeta_n^2/\sigma^2}}. \nonumber
\end{align}

By Eq.~(4) in \cite{barbier2016mutual} and by Eq.~\eqref{eqn:equi_risk}, we have 
\begin{align}\zeta_n^2&\overset{p}{\to} \lim_{n,d\to\infty,n/d\to\delta} \E\zeta_n^2
=\frac{\sigma^2\E_{(\beta_0, G) \sim \Pi \times \cN(0, 1)}[(\beta_0 - \cE(\beta_0 + \tau G; \Pi, \tau))^2]}{\sigma^2\delta+\E_{(\beta_0, G) \sim \Pi \times \cN(0, 1)}[(\beta_0 - \cE(\beta_0 + \tau G; \Pi, \tau))^2]} =
\frac{\sigma^2(\tau_\star^{2}-\sigma^2)}{\tau_\star^{2}}.
\label{zeta_n_limit}\end{align}
Furthermore, by Eq.~(26) in \cite{barbier2016mutual}, we obtain
\begin{align}\frac{\| \Xb_{-j}(\bbeta_{0,-j}-\langle{\bbeta}_{-j}\rangle_{\mu_{-}})+\tbeps\|_2^2}{n}=\frac{\|\tbeps\|^2}{n}+\zeta_n^2+\frac{2\big\la \tbeps, \Xb_{-j}\left(\bbeta_{0,-j}-\langle{\bbeta}_{-j}\rangle_{\mu_-}\right) \big\ra}{n}\overset{p}{\to}\sigma^2-\lim_{n,d\to\infty,n/d\to\delta} \E\zeta_n^2 =\frac{\sigma^4}{\tau_\star^{2}}. \label{ymmse_and_noise}
\end{align}
Moreover, we have $\|\txb_j\|_2^2 \overset{p}{\to} 1$. Combining the convergence results above implies that
\[
\begin{aligned}
\lim_{j \to \infty} \tau_j = &~ 1/\sqrt{(1 - \sigma^2(\tau_\star^2 - \sigma^2) / \tau_\star^2)/\sigma^2} = \tau_\star, \\
G_j \stackrel{d}{\to}&~ \cN\Big(0, \|\Xb_{-j}(\bbeta_{0,-j}-\langle{\bbeta}_{-j}\rangle_{\mu_{-}} )+\tbeps\|_2^2 / (n \sigma \sqrt{\|\txb_j\|_{2}^{2}-\zeta_n^2/\sigma^2}) \Big) \stackrel{d}{\to} \cN(0, 1). 
\end{aligned}
\]
As a consequence, by Eq. \eqref{eq:2_norm_conv} and \eqref{eq:nu_one_dist} and by the definition of $\cP(\,\cdot \,) = \cP(\, \cdot \,; \Pi, \tau_\star)$  as in Eq. \eqref{eqn:posterior_probability_low}, for any set $S \subseteq [0, 1]$, we have
\[
\begin{aligned}
\P_{\tilde \xb_j}(\< \ones\{ \beta_j = 0 \} \>_\mu \in S | \Xb, \beps, \bbeta_0) \approx \P(\< \ones\{ \beta_j = 0\}\>_{\nu} \in S | \Xb, \beps, \bbeta_0) \approx \P_{G \sim \cN(0, 1)}( \cP(\tau_\star G ) \in S  ). 
\end{aligned}
\]
Taking $S = [0, P_j(\Xb, \Yb)] $, we obtain for any fixed $j \in [d]$ that
\[
\begin{aligned}
\hat p_j(\Xb, \Yb) =&~ \P_{\tilde \xb_j}(\< \ones\{ \beta_j = 0 \} \>_\mu \le P_j(\Xb, \Yb) | \Xb, \beps, \bbeta_0)\\ \approx&~ \P(\< \ones\{ \beta_j = 0\}_{\nu} \le P_j(\Xb, \Yb) | \Xb, \beps, \bbeta_0) \approx \P_{G \sim \cN(0, 1)}( \cP(\tau_\star G ) \le P_j(\Xb, \Yb)  ) = \Psi(P_j(\Xb, \Yb)). 
\end{aligned}
\]
Averaging this approximation over $j \in [d]$ gives Eq.~\eqref{eq:permu_distribution_1}.

We finally present Formalism \ref{conj:marginal_nu} and provide its intuitions. 

\begin{formalism}[Marginal distribution of $\mu$ in Formalism~\ref{fm:pvalue_permu}]\label{conj:marginal_nu}

Assume that $\operatorname{supp}\{\Pi\} \subseteq[-B, B]$ for some fixed $0<B<\infty .$ Let $\boldsymbol{Z} \in \mathbb{R}^{n \times(d-1)}$ with $Z_{i j} \sim_{i i d} \mathcal{N}(0,1 / n), \boldsymbol{z} \in \mathbb{R}^{n}$ with $z_{i} \sim_{i i d} \mathcal{N}(0,1 / n), \boldsymbol{\xi}_{-} \in \mathbb{R}^{d-1}$ with $\xi_{-, j} \sim_{\text {i.i.d. }} \Pi\in \ProbFam(\R), \beps\in \mathbb{R}^{n}$ with $\varepsilon_{i} \sim_{i i d} \mathcal{N}\left(0, \sigma^{2}\right)$, and $\xi\in\R$. Let $\boldsymbol{y}=\boldsymbol{Z} \boldsymbol{\xi}_{-}+\boldsymbol{z} \xi+\boldsymbol{\varepsilon}.$ Define measures $\mu \in \ProbFam\left(\mathbb{R}^{d}\right), \mu_{-} \in \ProbFam\left(\mathbb{R}^{d-1}\right), \nu \in \ProbFam(\mathbb{R})$ by (with ensemble average $\langle\cdot\rangle_{\mu},\langle\cdot\rangle_{\mu_{-}}$, and $\left.\langle\cdot\rangle_{\nu}\right)$
$$
\begin{aligned}
\mu\left(\mathrm{d} \beta, \mathrm{d} \boldsymbol{\beta}_{-}\right) & \propto \exp \left\{-\left\|\boldsymbol{y}-\boldsymbol{Z} \boldsymbol{\beta}_{-}-\boldsymbol{z} \beta\right\|_{2}^{2} /\left(2 \sigma^{2}\right)\right\} \Pi(\mathrm{d} \beta) \prod_{i=1}^{d-1} \Pi\left(\mathrm{d} \beta_{-, i}\right), \\
\mu_{-}\left(\mathrm{d} \boldsymbol{\beta}_{-}\right) & \propto \exp \left\{-\left\|\boldsymbol{Z} \boldsymbol{\xi}_{-}+\boldsymbol{\varepsilon}-\boldsymbol{Z} \boldsymbol{\beta}_{-}\right\|_{2}^{2} /\left(2 \sigma^{2}\right)\right\} \prod_{i=1}^{d-1} \Pi\left(\mathrm{d} \beta_{-, i}\right), \\
\nu(\mathrm{d} \beta) & \propto \exp \left\{\frac{(\beta-\boldsymbol{\xi}) \boldsymbol{z}^{\top}\left[\boldsymbol{Z}\left(\boldsymbol{\xi}_{-}-\left\langle\boldsymbol{\beta}_{-}\right\rangle_{\mu_{-}}\right)+\boldsymbol{\varepsilon}\right]}{\sigma^2}-\frac{\left(\|\boldsymbol{z}\|_{2}^{2}-\zeta_{n}^{2}/\sigma^2\right)}{2\sigma^2}(\beta-\xi)^{2}\right\} \Pi(\mathrm{d} \beta),
\end{aligned}
$$
where
$$
\zeta_{n}^{2}=\left\|\boldsymbol{Z}\left(\boldsymbol{\xi}_{-}-\left\langle\boldsymbol{\beta}_{-}\right\rangle_{\mu_{-}}\right)\right\|_{2}^{2} / n.
$$
Then, fix any bounded function $f: \mathbb{R} \rightarrow \mathbb{R}$ and $\xi\in\R$, we have
$$
\lim _{d \rightarrow \infty, n / d \rightarrow \delta} \E_{\bZ,\bz,\beps,\bxi_-}\left(\la f(\beta)\ra_\mu-\la f(\beta)\ra_\nu\right)^2=0.
$$

\end{formalism}
\begin{proof}[Intuitions of Formalism \ref{conj:marginal_nu}]
Without loss of generality, we assume $\sigma^2 = 1$. Define
\[
\begin{aligned}
M(\beta, \bbeta_-) \equiv&~ \exp\Big\{ - (\beta - \xi) \bz^\sT \bZ (\bbeta_- - \< \bbeta_- \>_{\mu_-}) -  \frac{ \zeta_n^2}{2}(\beta - \xi)^2 \Big\}, \\
h(\beta,\bbeta_-) \equiv&~ (\beta - \xi) \cdot \bz^\sT (\bZ \bxi_- + \beps - \bZ \bbeta_-) - \frac{1}{2} \| \bz \|_2^2 (\beta - \xi)^2, \\
u(\beta) \equiv &~ (\beta - \xi) \bz^\sT [\bZ (\bxi_- - \< \bbeta_- \>_{\mu_-}) + \beps ] - \frac{(\| \bz \|_2^2 - \zeta_n^2)}{2}(\beta - \xi)^2. 
\end{aligned}
\]
Then we have $\nu(\de \beta) = \exp\{ u(\beta)\} \Pi(\de \beta)$, and
\[
\begin{aligned}
\exp\{ h(\beta,\bbeta_-) \} = M(\beta, \bbeta_-) \exp\{ u(\beta)\}.  
\end{aligned}
\]
For any $f: \R \to \R$ (as a function of $\beta$), we have 
\[
\begin{aligned}
\< f(\beta) \>_\mu = \frac{\int f (\beta) \< \exp\{ h(\beta,\bbeta_-)\} \>_{\mu_-}  \Pi(\de \beta)}{\int \< \exp\{ h(\beta,\bbeta_-) \} \>_{\mu_-} \Pi(\de \beta)} = \frac{\< f (\beta) \< M(\beta, \bbeta_-) \>_{\mu_-}  \>_\nu}{\< \<  M(\beta, \bbeta_-) \>_{\mu_-} \>_\nu}. 
\end{aligned}
\]
This gives 
\[
\begin{aligned}
\< f(\beta) \>_\mu - \< f(\beta) \>_\nu =&~ \frac{\< f (\beta) \< M \>_{\mu_-}  \>_\nu}{\< \<  M \>_{\mu_-} \>_\nu}  \< \< 1 - M \>_{\mu_-} \>_\nu + \< f (\beta) \>_\nu \< \<  M - 1 \>_{\mu_-} \>_\nu, 
\end{aligned}
\]
so that 
\begin{equation}\label{eqn:lem:square_difference}
\Big\vert \< f(\beta) \>_\mu - \< f(\beta) \>_\nu \Big\vert  \le  2 \| f \|_\infty \Big\vert \< \<  M -1 \>_{\mu_-} \>_\nu \Big\vert. 
\end{equation}

Next, to upper bound $\E(\< f(\beta) \>_\mu - \< f(\beta) \>_\nu )^2$, we define the event 
\begin{equation}\label{eqn:Def_cE_eps_in_pproof}
\cE_\eps = \Big\{ \sup_{\beta \in [-B, B]} \<  M -1 \>_{\mu_-}^2 \le \eps \Big\}
\end{equation}
and let $F =\sup_{\beta \in [-B, B]}\vert f(\beta) \vert<\infty$. Then 
\begin{equation}\label{eqn:important_equation_f_difference_in_formalism_proof}
\begin{aligned}
 \E_{\bz, \bZ, \bxi_-, \beps}\Big(\< f(\beta) \>_\mu - \< f(\beta) \>_\nu \Big)^2
=&\E_{\bz, \bZ, \bxi_-, \beps}\Big[\Big(\< f(\beta) \>_\mu - \< f(\beta) \>_\nu \Big)^2 (\ones(\cE_\eps^c) + \ones(\cE_\eps))\Big]
\\
\le&~  4 F^2 \Big[ \P_{\bz, \bZ, \bxi_-, \beps}(\< \<  M -1 \>_{\mu_-}^2 \>_{\nu} \ge \eps) + \eps \Big]\\
\le&~ 4 F^2 \Big[ \P_{\bz, \bZ, \bxi_-, \beps}(\cE_\eps^c) + \eps \Big], \\
\end{aligned}
\end{equation}
where the first inequality uses Eq.~\eqref{eqn:lem:square_difference}. As a consequence, Formalism \ref{conj:marginal_nu} holds as long as $\lim_{d \to \infty, n/d \to \delta} \P_{\bz, \bZ, \bxi_-, \beps}(\cE_\eps^c) = 0$. 

% Moreover,  this is true if Claim~\ref{lem:M_concentration} holds. \sm{modify}

% We now claim that, for any $\xi \in [-B, B]$
% \begin{equation}
% \lim_{d \to \infty, n/d \to \delta} \P_{\bz, \bZ, \bxi_-, \beps}\Big( \sup_{\beta \in [-B, B]} \<  M -1 \>_{\mu_-}^2 \ge \eps \Big) = 0. 
% \end{equation}
We show $\lim_{d \to \infty, n/d \to \delta} \P_{\bz, \bZ, \bxi_-, \beps}(\cE_\eps^c) = 0$ using an interpolation method. Denote
\[
V(x) = \exp \Big\{ - (\beta - \xi) x -  \frac{ \zeta_n^2}{2}(\beta - \xi)^2 \Big\} - 1. 
\]
Let $G_1, G_2 \sim_{i.i.d.} \cN(0, 1)$, and $S_j(t) = \sqrt{t}  \bz^\sT (\bZ (\bbeta_-^{(j)} - \< \bbeta_- \>_{\mu_-})) + \sqrt{1 - t} \zeta_n G_j$. Define 
\[
\ell(t) = \E_{\bz, G_1, G_2}[\< V(S_1(t)) V(S_2(t)) \>_{\mu_-^{\otimes 2}} ], 
\]
where $\< O(\bbeta_-^{(1)}, \bbeta_-^{(2)})\>_{\mu_-^{\otimes 2}}$ stands for the expectation of $O(\bbeta_-^{(1)}, \bbeta_-^{(2)})$ with respect to $(\bbeta_-^{(1)}, \bbeta_-^{(2)}) \sim \mu_- \times \mu_-$. Then we have $\ell(0) = 0$, and 
\begin{equation}\label{eqn:ell_M_relation_in_proof}
\ell(1) = \E_{\bz} \Big[ \Big\< \exp \Big\{ - (\beta - \xi)  \bz^\sT \bZ (\bbeta_- - \< \bbeta_- \>_{\mu_-})  -  \frac{ \zeta_n^2}{2}(\beta - \xi)^2 \Big\} - 1  \Big\>_{\mu_-}^2 \Big] = \< M - 1 \>_{\mu_-}^2. 
\end{equation}
Furthermore, calculating the derivative of $\ell$ and using the Stein's formula, we have
\begin{equation}\label{eqn:ell_prime}
\begin{aligned}
\ell'(t) =&~ \Big\< \frac{1}{2} \sum_{ij=1}^2 \Big( \frac{1}{n}\< \bZ(\bbeta_-^{(1)} - \< \bbeta_- \>_{\mu_-}), \bZ(\bbeta_-^{(2)} - \< \bbeta_- \>_{\mu_-})\> - \zeta_n^2 \ones\{i = j\} \Big) \times \E_{\bz, G_1, G_2}\Big[  \partial^2_{ij} [V(S_1(t))  V(S_2(t))] \Big] \Big\>_{\mu_-^{\otimes 2}} \\
\le&~ 2 \Big( \Big\< \Big( \frac{1}{n} \| \bZ(\bbeta_- - \< \bbeta_- \>_{\mu_-}) \|_2^2 - \zeta_n^2 \Big)^2 \Big\>_{\mu_-} +  \Big\< \frac{1}{n^2} \< \bZ(\bbeta_-^{(1)} - \< \bbeta_- \>_{\mu_-}), \bZ(\bbeta_-^{(2)} - \< \bbeta_- \>_{\mu_-})\>^2 \Big\>_{\mu_-^{\otimes 2}} \Big)^{1/2} \\
&~ \times \Big( \< \E[\partial_x V(S_1(t))^4] \>_{\mu_-} + \< \E[\partial_x^2 V(S_1(t))^2] \>_{\mu_-} \< \E[ V(S_1(t))^2] \>_{\mu_-} \Big)^{1/2}. 
\end{aligned}
\end{equation}
To upper bound $\ell'(t)$, note that when $\Pi$ is supported in $[-B, B]$ and $\beta, \xi \in [-B, B]$, we have (for some universal constant $K$)
\[
\begin{aligned}
&~\< \E[\partial_x V(S_1(t))^4] \>_{\mu_-} \\
=&~ (\beta - \xi)^4\exp \Big\{ - 2 \zeta_n^2(\beta - \xi)^2 \Big\} \< \E[  \exp\{ - 4 (\sqrt{t}  \bz^\sT (\bZ (\bbeta_-^{(j)} - \< \bbeta_- \>_{\mu_-})) + \sqrt{1 - t} \zeta_n G ) (\beta - \xi)\}] \>_{\mu_-} \\
\le&~(\beta - \xi)^4\exp \Big\{ 6 \zeta_n^2 (\beta - \xi)^2 \Big\} \<  \exp\{ 8  (\| \bZ (\bbeta_- - \< \bbeta_- \>_{\mu_-}) \|_2^2 /n) (\beta - \xi)^2\} \>_{\mu_-}\\
\le&~ K B^4 \exp\{ K B^4 \| \bZ \|_{\op}^2 \}. 
\end{aligned}
\]
Similarly, we have 
\[
\< \E[\partial_x^2 V(S_1(t))^2] \>_{\mu_-} \le K B^4 \exp\{ K B^4  \| \bZ \|_{\op}^2 \}, ~~ \< \E[ V(S_1(t))^2] \>_{\mu_-} \le K  \Big( \exp\{ K B^4  \| \bZ \|_{\op}^2 \} + 1\Big). 
\]
Denote $\Gamma(\bZ) = K [(B^2 + 1) \exp\{ K B^4 \| \bZ \|_{\op}^2 \} + 1]$. Combining the above bounds with Eq.~\eqref{eqn:ell_prime} and \eqref{eqn:ell_M_relation_in_proof}, we have 
\begin{align}
\E_{\bxi, \beps}[\< M - 1 \>_{\mu_-}^2] = \E_{\bxi, \beps}[ \ell(1)] \le \int_0^1 \E[| \ell'(t) |] \de t \le \Gamma(\bZ) \cdot (E_1 + E_2)^{1/2}, \label{eq:M_concentrate}
\end{align}
where
\[
\begin{aligned}
E_1 \equiv&~ \E_{\bxi, \beps} \Big\< \Big( \frac{1}{n} \| \bZ(\bbeta_- - \< \bbeta_- \>_{\mu_-}) \|_2^2 - \zeta_n^2 \Big)^2 \Big\>_{\mu_-} =  \E_{\bxi, \beps} \Big( \frac{1}{n} \| \bZ(\bxi_{-} - \< \bbeta_- \>_{\mu_-}) \|_2^2 - \zeta_n^2 \Big)^2 , \\
E_2 \equiv&~ \E_{\bxi, \beps} \Big\< \frac{1}{n^2} \< \bZ(\bbeta_-^{(1)} - \< \bbeta_- \>_{\mu_-}), \bZ(\bbeta_-^{(2)} - \< \bbeta_- \>_{\mu_-})\>^2 \Big\>_{\mu_-^{\otimes 2}}. 
\end{aligned}
\]
We believe that $\frac{1}{n} \| \bZ(\bxi_{-} - \< \bbeta_- \>_{\mu_-}) \|_2^2$ will concentrate around its expectation $\zeta_n^2$, and $\frac{1}{n^2} \< \bZ(\bbeta_-^{(1)} - \< \bbeta_- \>_{\mu_-}), \bZ(\bbeta_-^{(2)} - \< \bbeta_- \>_{\mu_-})\>$ will concentrate around its expectation $0$, so that $E_1,E_2$ converge to zero uniformly over $\beta \in [-B, B]$ as $n\to\infty$. However, due to technical difficulties, we make this as a conjecture and leave it open for future work. Then by Eq. \eqref{eq:M_concentrate} and by that $E_1, E_2 \to 0$, we have $\E_{\bxi, \beps}[\< M - 1 \>_{\mu_-}^2] \to 0$, so that by Eq. \eqref{eqn:Def_cE_eps_in_pproof}, we have
\[
\lim_{d \to \infty, n/d \to \delta} \P_{\bz, \bZ, \bxi_-, \beps}(\cE_\eps^c) = 0. 
\]
Combining with Eq. \eqref{eqn:important_equation_f_difference_in_formalism_proof} gives the conclusion of the formalism. \end{proof}

\subsection{Intuitions of Formalism \ref{fm:poedce2}: Distribution of the distilled statistics}\label{sec:justification-distilled}

~

% In the definition of the limiting distilled {\CRT} p-value $\tilde p_j$ as in Eq. (\ref{eqn:definition-tilde-p-formalism}), 
Since $\txb_k \sim \cN(\bzero, (1/n) \id_n )$ is independent of $(\Xb, \Yb)$ and note that $\< \cdot \>_{-k}$ does not depend on $\txb_k$, it follows that 
\[
\< \Yb - \Xb_{-k} \< \bbeta_{-k} \>_{-k}, \txb_k\>\overset{d}{\to} \cN(0, \| \Yb - \Xb_{-k} \< \bbeta_{-k} \>_{-k} \|_2^2/n ) \overset{d}{\to} \cN(0,\sigma^4/\tau_{\star}^{2}), 
\]
where the last convergence is by Eq.~\eqref{ymmse_and_noise}. Therefore, $U_j$ as defined in Eq. \eqref{eqn:definition_uk-formalism} satisfies $U_j(\Yb,  \Xb_{-j}, \txb_j) \overset{d}{\to}\PostProb(\tau_\star Z)$, where $Z \sim\cN(0,1)$. Since $\Psi$ is the CDF of $\PostProb(\tau_\star Z)$ when $Z \sim\cN(0,1)$, we have that for fixed $j \in [d]$, 
\[
\tp_j(\Yb,  \Xb) = \P_{\tilde \xb_j \sim \cN(\bzero, (1/n)\id_n)}( U_j(\Yb,  \Xb) \ge U_j(\Yb,  \Xb_{-j}, \txb_j) ) \stackrel{d}{\approx} \Psi(U_j(\Yb,  \Xb)). 
\]
Due to this approximation, we also expect that 
\begin{align}\label{eqn:tildep_Psi_formalism_proof}
\lim_{d\to \infty, n/d \to \delta} \frac{1}{d}\sum_{j = 1}^d \psi(\beta_{0, j}, \tp_j(\Yb, \Xb)) 
=
\lim_{d\to \infty, n/d \to \delta} \frac{1}{d}\sum_{j = 1}^d \psi(\beta_{0, j}, \Psi(U_j(\Yb,  \Xb))). 
\end{align}
Therefore, to show Formalism \ref{fm:poedce2}, we just need to derive the asymptotic empirical distribution of $\{ (\beta_{0, j}, \Psi[U_j(\Yb, \Xb))]  \}_{j \in [d]}$, which is given by the following formalism. 
\begin{formalism}\label{fm:distill_s}
Let $(\Yb, \Xb)$ be generated from the Bayesian linear model (Assumption \ref{ass:Bayesian_linear_model}). Define $\< \cdot \>_{-k}$ to be the ensemble average over 
\[
\mu_{-k}( \de \bbeta_{-k}) \propto \exp\Big\{ - \| \Yb - \Xb_{-k} \bbeta_{-k} \|_2^2 / (2 \sigma^2) \Big\} \prod_{j \neq k} \Pi(\de \beta_j). 
\]
Define 
\[
S_k(\Yb,  \Xb) = \< \Yb - \Xb_{-k} \< \bbeta_{-k} \>_{-k}, \xb_k\>.
\]
Then in the $n, d \to \infty$ and $n / d \to \delta$ asymptotics, we have for sufficiently smooth $\psi:\R\times\R\mapsto\R$  that
\begin{align}\label{eqn:limiting-empirical-formalim5}
\lim_{d\to \infty, n/d \to \delta} \frac{1}{d}\sum_{j = 1}^d \psi(\beta_{0, j},  S_j(\Yb, \Xb))
&= \E_{(\beta_0, Z) \sim \Pi \times \cN(0, 1)}[\psi(\beta_{0}, (\sigma^2 / \tau_\star^2 ) \beta_{0} + (\sigma^2 / \tau_\star) Z)]. 
% \lim_{d\to \infty, n/d \to \delta} \frac{1}{d}\sum_{j = 1}^d \psi\big(\beta_{0, j}, (\sigma^2 / \tau_\star^2 ) \beta_{0, j} + (\sigma^2 / \tau_\star) Z_j\big),
\end{align}
\end{formalism}
Using Formalism~\ref{fm:distill_s} and Eq. \eqref{eqn:tildep_Psi_formalism_proof}, we immediately obtain
\begin{align*}
&~\lim_{d\to \infty, n/d \to \delta} \frac{1}{d}\sum_{j = 1}^d \psi(\beta_{0, j}, \tilde p_j(\Yb, \Xb))= \lim_{d\to \infty, n/d \to \delta} \frac{1}{d}\sum_{j = 1}^d \psi(\beta_{0, j}, \Psi(U_j(\bY, \bX)))\\
=&~
\lim_{d\to \infty, n/d \to \delta} \frac{1}{d}\sum_{j = 1}^d \psi(\beta_{0, j}, \Psi[\PostProb( (\tau_\star^2 / \sigma^2) S_j(\Yb, \Xb))]) 
=\E_{(\beta_0, Z) \sim \Pi \times \cN(0, 1)}\psi(\beta_{0},\Psi[\PostProb(\beta_{0} + \tau_\star Z)])].
\end{align*}
This gives Formalism \ref{fm:poedce2}.

\begin{proof}[Intuitions of Formalism \ref{fm:distill_s}]

Here we fix a coordinate $k \in [d]$, and provide the intuition that $(\beta_{0, k}, S_k(\Yb, \Xb))$ has asymptotically the same distribution as $(\beta_0, (\sigma^2 / \tau_\star^2 ) \beta_{0} + (\sigma^2 / \tau_\star) Z))$ where $(\beta_0, Z) \sim \Pi \times \cN(0, 1)$.

We define $\tilde \Yb = \Yb - \xb_k \cdot \beta_{0, k}$. That is, $\tilde \Yb$ is a leave-one-out model
\begin{align*}
\tilde \Yb=\Xb_{-k}\bbeta_{0,-k}+\xb_k\cdot0+\beps. 
\end{align*} 
We further define 
$$
\tilde\mu_{-k}( \de \tilde\bbeta_{-k}) \propto \exp\Big\{ - \| \tilde\Yb - \Xb_{-k} \tilde\bbeta_{-k} \|_2^2 / (2 \sigma^2) \Big\} \prod_{j \neq k} \Pi(\de \tilde\beta_j)
$$
with a shorthand notation $\la\cdot\ra_{-\tilde k}$ denoting the ensemble average over $\tilde\mu_{-k}$. We next define an intermediate quantity 
\begin{align*}
\tilde S_k(\tilde\Yb,  \Xb) \equiv \< \tilde\Yb - \Xb_{-k} \< \tilde\bbeta_{-k} \>_{{ -\tilde k}}, \xb_k\>. 
\end{align*}
By the independence between $\tilde\Yb - \Xb_{-k} \< \tilde\bbeta_{-k} \>_{{ -\tilde k}}$ and $\xb_k$, and by Eq.~\eqref{ymmse_and_noise}, we have
\begin{align}\label{eqn:tildeS_k-limit-formalims5}
\tilde S_k(\tilde\Yb,  \Xb)
\overset{d}{=} \cN(0, \|\tilde\Yb - \Xb_{-k} \< \tilde\bbeta_{-k} \>_{{ -\tilde k}}\|_2^2 / n )\overset{d}{\to}\cN(0,\sigma^4 / \tau_\star^2). 
\end{align}

Furthermore, we define 
\begin{equation}\label{eqn:Delta_k-definition-formalism5}
\begin{aligned}
\Delta_k(\Yb, \Xb) \equiv&~ S_k(\Yb,  \Xb) - \tilde S_k(\tilde\Yb,  \Xb) - (\sigma^2/\tau_\star^{2}) \beta_{0,k}\\
=&~ \< (\Yb - \Xb_{-k} \< \bbeta_{-k} \>_{-k}) - (\tilde \Yb - \Xb_{-k} \< \tilde \bbeta_{-k} \>_{- \tilde k}), \xb_k\> - (\sigma^2/\tau_\star^{2}) \beta_{0,k}\\
=&~ \beta_{0,k} (\|\xb_k\|_2^2 - \sigma^2/\tau_\star^{2} ) +\<  \Xb_{-k} (\< \tilde \bbeta_{-k} \>_{- \tilde k} - \< \bbeta_{-k} \>_{-k}), \xb_k\>.
\end{aligned}
\end{equation}
Let $\beta_{0,k}$ to be fixed and taking expectation over remaining quantities, we obtain
\begin{align}
\lim_{n\to\infty}\E \Delta_k(\Yb,  \Xb) &=\lim_{n\to\infty} \E \< - \Xb_{-k} \< \bbeta_{-k} \>_{-k}, \xb_k\>+ \beta_{0,k} (1 - \sigma^2/\tau_\star^{2} ) \nonumber\\
&= \lim_{n\to\infty}\frac{1}{n}\E \tr(\nabla_{\xb_k}\< \Xb_{-k}(\bbeta_{0,-k}-\bbeta_{-k}) \>_{-k})+ \beta_{0,k} (1 - \sigma^2/\tau_\star^{2} ) \nonumber\\
% &= \lim_{n\to\infty}-\frac{\beta_{0,k}}{n\sigma^2}\left(\E  \< \|\Xb_{-k}(\bbeta_{0,-k}-\bbeta_{-k})\|_2^2 \>_{-k}-\E  \|\< \Xb_{-k}(\bbeta_{0,-k}-\bbeta_{-k})\>_{-k}\|_2^2 \right)+ \beta_{0,k} (1 - \sigma^2/\tau_\star^{2} )\nonumber\\
&=\lim_{n\to\infty}-\frac{\beta_{0,k}}{n\sigma^2}\E  \|\< \Xb_{-k}(\bbeta_{0,-k}-\bbeta_{-k})\>_{-k}\|_2^2+ \beta_{0,k} (1 - \sigma^2/\tau_\star^{2} ) = 0 \label{eq_first_moment_t_k}.
\end{align} 
Here, the first equality uses the fact that $\E[\| \xb_k \|_2^2] = 1$ and $\E[\< \Xb_k \< \tilde \bbeta_{-k} \>_{-\tilde k}, \xb_k\>] = 0$. The second equality uses Stein's lemma. The third equality follows from derivative calculations and the Nishimori's identity. The last equality is by Eq.~\eqref{zeta_n_limit}. 

Furthermore, we believe that $\Delta_k(\Yb, \Xb)$ will concentrate around its mean, i.e.,
\begin{equation}\label{eqn:Delta_k-concentration-formalims5}
\Delta_k(\Yb, \Xb) \stackrel{p}{\to} \E[\Delta_k(\Yb, \Xb)] \to 0.
\end{equation}
This concentration phenomenon is not a simple consequence of any standard concentration inequality. We leave the rigorous proof to future work. 
As a consequence, by the definition of $\Delta_k$ as in Eq. \eqref{eqn:Delta_k-definition-formalism5}, and by Eq. \eqref{eqn:tildeS_k-limit-formalims5} and \eqref{eqn:Delta_k-concentration-formalims5}, this shows that $S_k(\Yb, \Xb) \stackrel{d}{\to} \cN((\sigma^2/\tau_\star^{2}) \beta_{0,k}, \sigma^4 / \tau_\star^2)$, and hence  $(\beta_{0, k}, S_k(\Yb, \Xb))$ has asymptotically the same distribution as $(\beta_0, (\sigma^2 / \tau_\star^2 ) \beta_{0} + (\sigma^2 / \tau_\star) Z))$ where $(\beta_0, Z) \sim \Pi \times \cN(0, 1)$. Hence we expect that Eq. \eqref{eqn:limiting-empirical-formalim5} holds. 
\end{proof}

\section{Verification of formalisms through numerical simulations}\label{sec:experiment-verification}

In this section, we numerically verify Formalism \ref{fm:post_mean_zero}, \ref{fm:pvalue_permu} and \ref{fm:poedce2}. Note that these formalisms have a common form  
\begin{equation}\label{eqn:formalism-verification}
\lim_{d\to \infty, n/d \to \delta} \frac{1}{d}\sum_{j = 1}^d \psi(\beta_{0, j},f_j(\cD)) 
=
\E_{(\beta_0, Z) \sim \Pi \times \cN(0, 1)}[ \psi(\beta_{0},f(\beta_0,Z )],
\end{equation}
where $\{ f_j(\cD) \}_{j \in [d]}$ is the object of interest, and $f(\beta_0,Z )$ is the limiting version of $\{ f_j(\cD) \}_{j \in [d]}$. To numerically verify equations like \eqref{eqn:formalism-verification} hold, we compute the $1$-Wasserstein distance between the empirical distribution of $\{ f_j(\cD) \}_{j \in [d]}$ and the empirical distribution of $\{ f(\beta_{j},Z_j) \}_{j\in[d]}$, where $(\beta_j,Z_j)\simiid \Pi\times \cN(0,1)$. Eq. \eqref{eqn:formalism-verification} implies that the $1$-Wasserstein distance should converge to $0$ as $n, d \to \infty, n / d \to \delta$. To show this holds, in the following figures, we plot the $1$-Wasserstein distance versus the sample size $n$ (in the regime $n / d \to \delta$).

\begin{figure}[h]\centering
\includegraphics[width=0.95\textwidth]{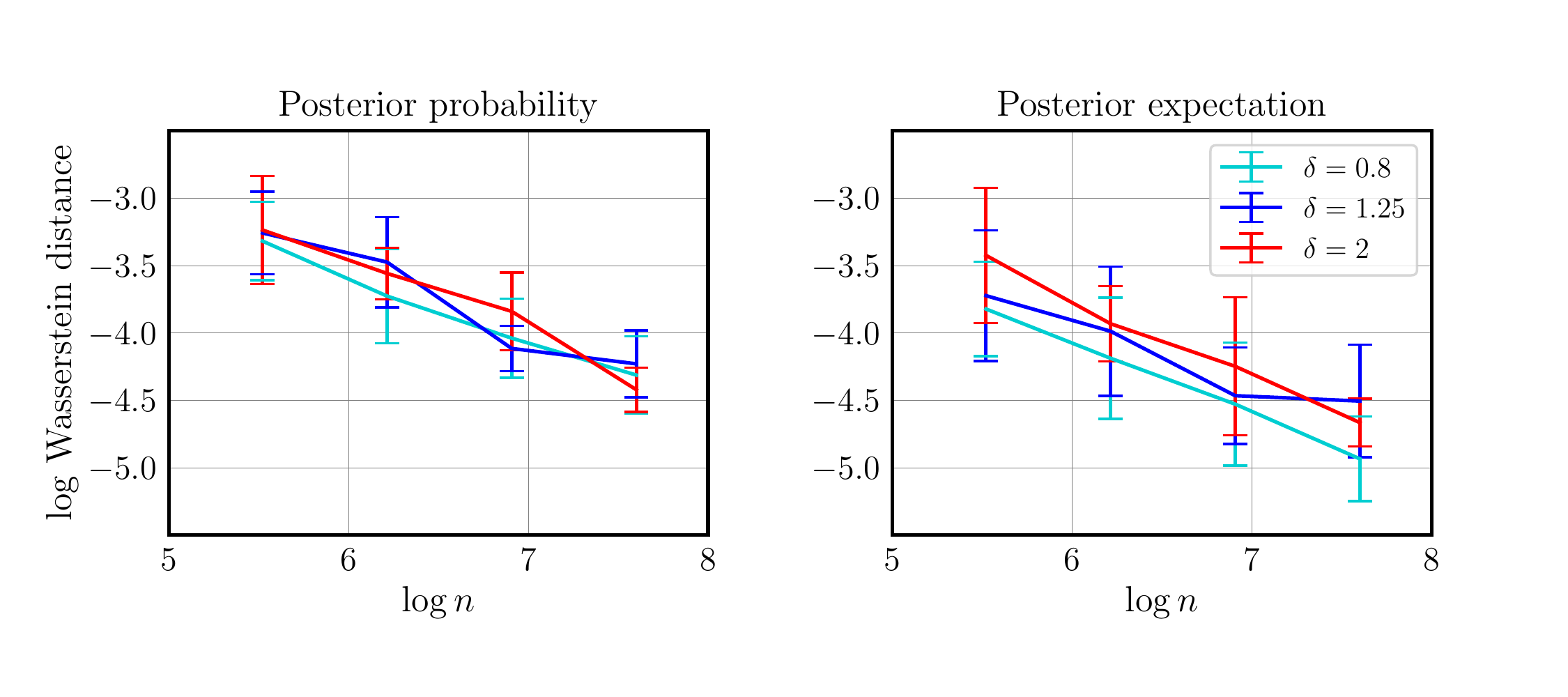}
\caption{Log-log plot of the Wasserstein distance versus the sample size $n$ (for $n=250,500,100,2000$) for Formalism \ref{fm:post_mean_zero}. Left panel: the distance between the local fdrs and the samples from the predicted limiting distribution (Eq. \eqref{eqn:limit-local-fdr-formalism} in Formalism \ref{fm:post_mean_zero}). Right panel: the distance between the posterior expectations and samples from the predicted limiting distribution (Eq. \eqref{eqn:limit-PoE-formalism} in Formalism \ref{fm:post_mean_zero}). The mean curve is averaged over 10 independent instances, and the error bars report the standard deviation across instances. }
\label{fig:formalism1}
\end{figure}

\begin{figure}[h]\centering
\includegraphics[width=0.95\textwidth]{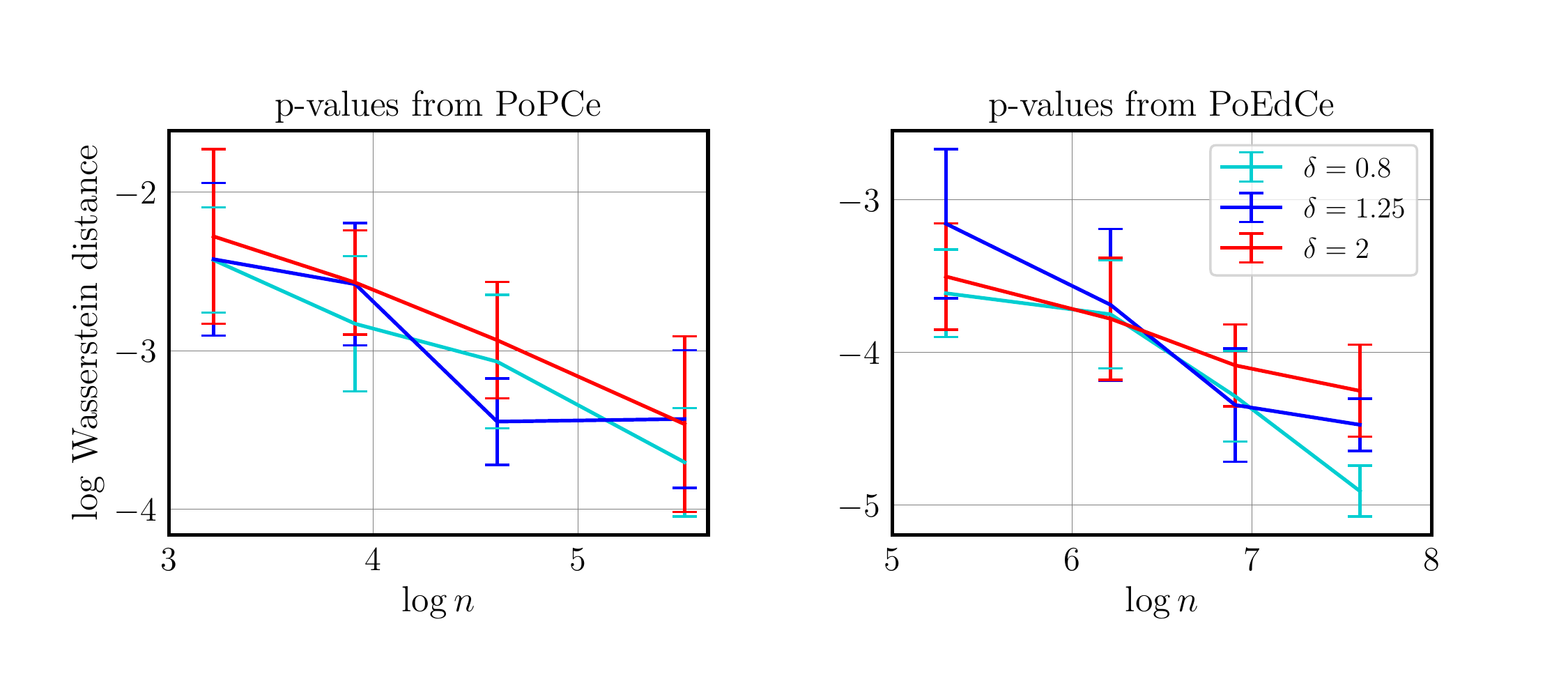}
\caption{Log-log plot of the Wasserstein distance versus the sample size $n$ for Formalism \ref{fm:pvalue_permu} and \ref{fm:poedce2}. 
Left panel: the distance between the {\PoPCe} p-values and the samples from the predicted limiting distribution (Eq. \eqref{eqn:formalism2equation} in Formalism \ref{fm:pvalue_permu}). The sample size grid is chosen to be $n=25,50,100,250$. Right panel: the distance between the {\PoEdCe} p-values and the samples from the predicted limiting distribution (Eq. \eqref{eqn:formalism3equation} in Formalism \ref{fm:poedce2}). The sample size grid is chosen to be $n=250,500,100, 2000$. The mean curve is averaged over 10 independent instances, and the error bars report the standard deviation across instances. }
\label{fig:formalism23}
\end{figure}

Figure \ref{fig:formalism1} is log-log plots of Wasserstein distances against sample sizes $n$ for verifying Formalism \ref{fm:post_mean_zero}. For each choice of $\delta=0.8,1.25, 2$ and each $n=250,500,1000, 2000$, we generate $10$ instances of $(\Yb,\Xb)$ from the Bayesian linear model with model parameters $\sigma=0.25$ and $\Pi=0.6\cdot\delta_0+0.2\cdot\delta_1+0.2\cdot\delta_{-1}$. We also sample $(\beta_j,Z_j) \simiid \Pi\times \cN(0,1)$ for $j\in[d]$. In the left panel, we plot the Wasserstein distance between the posterior means $\{\E[\beta_{0,j}|\cD]\}_{j\in[d]}$ and $\{\cE(\beta_j+\tau_\star Z_j)\}_{j\in[d]}$. In the right panel, we plot the Wasserstein distance between the local fdrs $\{\P(\beta_{0,j}=0|\cD) \}_{j\in[d]}$ and $\{\PostProb(\beta_j+\tau_\star Z_j)\}_{j\in[d]}$. Figure \ref{fig:formalism1} shows that the Wasserstein distances decay to zero as $n \to \infty$, which coincides with the predictions \eqref{eqn:limit-local-fdr-formalism} and \eqref{eqn:limit-PoE-formalism} in Formalism \ref{fm:post_mean_zero}. %The left and right panels of Figure \ref{fig:formalism1} are for posterior probability and posterior mean, respectively. 

Figure \ref{fig:formalism23} is log-log plots of Wasserstein distances against sample sizes $n$ for verifying Formalism \ref{fm:pvalue_permu} and \ref{fm:poedce2}. Similar to the experiments in Figure \ref{fig:formalism1}, we generate $10$ instances of $(\Yb, \Xb)$ with the same aspect ratios $\delta$, noise level $\sigma$, and prior $\Pi$. In the left panel, we plot the Wasserstein distance between the {\CRT} p-values in {\PoPCe} $\{ \hat p_j(\cD) \}_{j \in [d]}$ and $\{ \Psi(\cP(\beta_j + \tau_\star Z_j))\}_{j \in [d]}$, with sample size  $n=25,50,100,250$. In the right panel, we plot the Wasserstein distance between the {\dCRT} p-values in {\PoEdCe} $\{ \tilde p_j(\cD) \}_{j \in [d]}$ and $\{ \Psi(\cP(\beta_j + \tau_\star Z_j))\}_{j \in [d]}$, with sample size $n=250,500,1000,2000$. We choose smaller $n$ in the left panel since it is computationally heavier to calculate {\CRT} p-values.
Figure \ref{fig:formalism1} shows that the Wasserstein distances decay to zero as $n \to \infty$, which coincides with the prediction \eqref{eqn:formalism2equation} in Formalism \ref{fm:pvalue_permu}  and prediction \eqref{eqn:formalism3equation} in Formalism \ref{fm:poedce2}.

% Here we calculate the Wasserstein distance between the empirical distributions of $(\Psi[\PostProb(\beta_{j} + \tau_\star Z_j)])_{j\in[d]}$ and p-values obtained by {\PoPCe} and {\PoEdCe} calculated from the data. In both figures, the Wasserstein distance of empirical distributions from high dimensional model and one dimensional model decreases as $n$ increases. As shown in both figures, their $\log$-$\log$ plot shows linear decay, so that we can expect convergence of the Wasserstein distance to be decaying in exponential rate towards $0$. 

%\vspace{11pt}
%
%\bf{If you include a photo:}\vspace{-33pt}
%\begin{IEEEbiography}[{\includegraphics[width=1in,height=1.25in,clip,keepaspectratio]{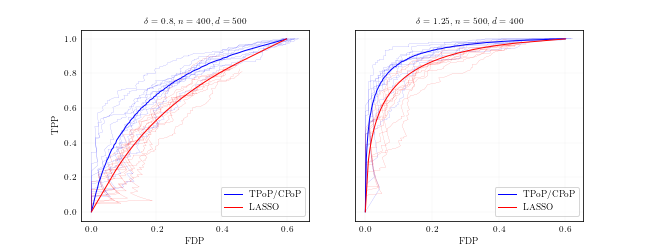}}]{Michael Shell}
%Use $\backslash${\tt{begin\{IEEEbiography\}}} and then for the 1st argument use $\backslash${\tt{includegraphics}} to declare and link the author photo.
%Use the author name as the 3rd argument followed by the biography text.
%\end{IEEEbiography}

%\vspace{11pt}

%\bf{If you will not include a photo:}\vspace{-33pt}
%\begin{IEEEbiographynophoto}{John Doe}
%Use $\backslash${\tt{begin\{IEEEbiographynophoto\}}} and the author name as the argument followed by the biography text.
%\end{IEEEbiographynophoto}

\end{document}